\documentclass[twoside,bezier,12pt, reqno]{amsart}
\usepackage{amssymb,latexsym,epsfig,mathrsfs,graphpap,graphics,amsmath,amssymb}
\usepackage{tikz}
\usetikzlibrary{arrows.meta,positioning}
\usepackage{amstext}
\usepackage{amsthm}
\usepackage[utf8]{inputenc}
\usepackage[english]{babel}
\usepackage{helvet}
\usepackage{courier}
\usepackage{tikz}
\usepackage{enumerate}
\usepackage{graphicx}
\usepackage{float}
\usepackage{subfigure}
\usepackage{framed}
\usetikzlibrary{shapes,arrows}
\usepackage{bm}

\newtheorem{theorem}{Theorem}[section]

\theoremstyle{Definition}
\newtheorem{definition}{Definition}[section]

\theoremstyle{remark}

\numberwithin{equation}{section}

\begin{document}

\begin{flushleft}
 {\bf\Large{  New Fractional Ambiguity Function Integrated with CNN-Based Machine Learning for Signal Classification}}

\parindent=0mm \vspace{.2in}

\bf{ Aamir H. Dar*$^1$, Prakhar Kumar Sonkar$^2$ and    Neeraj Kumar Sharma$^3$}
\end{flushleft}
\noindent
{\it $^{1,2,3}$Mehta Family School of Data Science \& Artificial Intelligence\\
Indian Institute of Technology Guwahati, Guwahati-781039, India
\\E-mail: $\text{ ahdkul740@gmail.com; prakharkumar81@gmail.com; neerajs@iitg.ac.in}$}
\begin{quotation}
\noindent
{\footnotesize {\sc Abstract.} A new fractional ambiguity function (NFrAF) derived from the fractional Fourier transform is introduced as a generalization of the classical ambiguity function.  The fundamental analytical properties of the NFrAF, including symmetry, marginality, and Moyal-type identities, are rigorously established. After verifying its ability to detect and localize mono-component and multi-component linear frequency-modulated (LFM) signals, the NFrAF is integrated into a convolutional neural network–based machine-learning framework for signal classification. Owing to its superior time–frequency resolution and localization, the NFrAF provides a more informative input representation than conventional methods such as the spectrogram and classical ambiguity function. Experimental results on simulated datasets demonstrate consistent improvements in classification accuracy, highlighting the effectiveness of the proposed representation for data-driven signal analysis.\\

{\bf Keywords:} Ambiguity function; Fractional Fourier transform; Gaussian beam;  Moyal’s
formula;   LFM signal;  Machine Learning; CNN.\\
\noindent
\textit{2000 Mathematics subject classification: } 42B10; 81S30; 42A38; 94A12;  68T07.}
\end{quotation}

\section{Introduction} \label{sec intro}
\noindent

Within the time-frequency analysis, one of the most popular study topics is non-stationary signal processing and analysis.
Several innovative theories in signal processing have been put out to examine non-stationary signals. These include the short-time Fourier transform (STFT) \cite{af1,sm2}, wavelet transform (WT) \cite{af2,sim} and fractional Fourier transform (FrFT) \cite{af3,af4}.  Due to its intrinsic peculiarities, the fractional Fourier transform (FrFT), which is a generalization of the classical FT, has garnered increasing attention. It has been demonstrated that the FrFT may be viewed as a unified time-frequency transform \cite{af4}. In quantum mechanics, Namias discussed the concept of the fractional Fourier operator in 1980 \cite{fr1}, and the applied mathematics community rediscovered it in 1987 \cite{af6}. Almeida \cite{af7} and Santhanam and McClellan \cite{af8} were the first to introduce it to the signal processing community. The FrFT's applicability in actual, practical scenarios are made possible by the discrete and digital computing techniques that were presented in \cite{af9,af10}. Differential equations, quantum physics, neural networks, optics, pattern recognition, communication systems, radar, sonar, signal and image processing, and other fields are among the many areas in which it is widely used \cite{fr3}-\cite{fr24}. The fractional Fourier transform (FrFT) is dependent on a parameter called $\theta$. In the time-frequency plane, it can be interpreted as the rotation of a signal by an angle $\theta$, which facilitates signal processing. The angular Fourier transform is another name for the FrFT. 

For  any signal $ {\bf x}(t) \in L^2(\mathbb R)$, the $\theta-$order FrFT is  defined by \cite{fr221}
\begin{equation}\label{eqn QPFT}
\mathcal F^\theta[{\bf x}(t)](u)=\int_{\mathbb R}{\bf x}(t)\mathcal K_\theta(t,u)dt,
\end{equation}
where the FrFT kernel $\mathcal K_\theta(t,u)$ is given by
\begin{equation}\label{q-kernal}
\mathcal K_\theta(t,u)=\left\{\begin{array}{ll}
  \Omega_\theta e^{\frac{i}{2}(t^2+u^2){\cot\theta}-i ut\csc\theta},&\theta\neq n\pi, \\
\dfrac{1}{\sqrt{2\pi}}\,e^{-itu}, &\theta=\dfrac{\pi}{2}, \\\
\delta(t-u), &\theta=2n\pi, \\
\delta(t+u), &\theta=(2n\pm 1)\pi, \\
\end{array}{}\right.
\end{equation}
Where  $ \Omega_\theta=\sqrt{(1-i\cot\theta)/2\pi}$ and  the  inverse  FrFT is given by
\begin{equation}
{\bf x}(t)=\int_{\mathbb R}\mathcal F^\theta[{\bf x}(t)](u)\mathcal K^*_\theta(t,u)du.
\end{equation}
Time-frequency analysis of a wider class of signals, particularly LMF chirp signals, is substantially improved by the fact that the FrFT is controlled by the free parameter $\theta$. A popular approach to take advantage of the FrFT's benefits is to extend the Wigner distributions and ambiguity function into the domain of fractional Fourier transforms \cite{fr222a}-\cite{fr222d}.\\
On the flip side, a common non-stationary signal used in radar and sonar systems, communications, is the linear frequency-modulated (LFM) signal. Since LFM signal processing is so crucial, numerous algorithms and techniques have been put forth \cite{new26}–\cite{new30}. One of the most crucial time-frequency instruments in LFM signal processing is the ambiguity function (AF) connected to the FT \cite{fra19,fra20}. Numerous other significant and practical methods have also been proposed in relation to the LFM signal parameter estimation and spectral analysis. These include the following: the minimum mean square error (MMSE) estimation \cite{fra21}, the scaled AF \cite{scale1,scale3}, the AF associated with the linear canonical transform (LCT) \cite{fraa1}, the Wigner transform in the LCT and quadratic phase Fourier  domain \cite{zhang}-\cite{owncar}, the Chirp FT \cite{fra26}, the Wiger Ville distribution (WVD) in FrFT, and the Radon-ambiguity transform (RAT) \cite{fra28}. 


For a finite energy signal ${\bf x}(t)$,  the classical AF is defined
denoted by
\begin{equation}\label{int 1}
\mathcal  A_{\bf x}(\tau,u)=\int_{\mathbb R}\left({\bf x}\underset{\frac{\tau}{2}}{\otimes}{\bf x}^*\right)(t) e^{-iut}dt,
\end{equation}
where $\left({\bf x}\underset{\frac{\tau}{2}}{\otimes}{\bf x}^*\right)(t)$  represents the tensor product ({\it superscript $\ast$ denote complex conjugate}) and is given by  $\left({\bf x}\underset{\frac{\tau}{2}}{\otimes}{\bf x}^*\right)(t) ={\bf x}\left(t+\frac{\tau}{2}\right){\bf x}^*\left(t-\frac{\tau}{2}\right).$

The authors in \cite{fra24,fr222b} first study the AF connected to the FrFT and discover several significant properties by adhering to the classical notion of AF. In the optical signal processing community, Dar et al.'s  \cite{fr222a1,fr222c}  recent study  explores the characteristics and physical significance of the WD and AF connected to the FrFT. In contrast to the description found in \cite{fra24}-\cite{fr222a1}, this article suggests a different definition for AF that is linked to the FrFT. It also goes over the key characteristics and uses of the newly defined AF in LFM signal processing.\\

On the flip side, the convolutional neural networks (CNNs)  \cite{siam1}-\cite{siam2} have emerged as a leading method in machine learning for signal classification due to their ability to automatically extract features from data. CNNs are now commonly used with time-frequency representations (TFRs) of non-stationary signals, despite their initial design for image analysis \cite{CNN1,CNN2}. Despite its low resolution, which is limited by the window selection in the short-time Fourier transform, the majority of current works rely on the spectrogram \cite{cnn1}-\cite{cnn3}. Superior energy concentration and time-frequency localization are provided by more sophisticated TFRs, such as those derived from the WD/AF. Even though recent research suggests that these cross-terms may contain useful discriminatory information for machine learning, they are frequently avoided because of cross-terms. WD-based CNN inputs are investigated in a few works \cite{CNN4,CNN3}, but usually without explanation or comparison with standard representations. Recent research trends highlight  the need for better-localized and higher-resolution TFRs as inputs to data-driven classification models in response to these gaps. This encourages the creation and study of new quadratic TFRs, especially those derived from fractional Fourier transforms, which provide more  flexibility by rotating in the time-frequency plane.

In this regard, our work suggests that a better time-frequency input for CNN-based machine learning is the new fractional ambiguity function (NFrAF), which is a generalization of the classical ambiguity function and scaled AF. We show that, in comparison to the spectrogram and the classical AF, NFrAF-based representations greatly improve CNN classification performance by taking advantage of its improved concentration, tunable fractional parameters, and capacity to capture subtle differences between similar signal classes.

The objectives of this article are as follows:\\
\begin{itemize}
\item To introduce new fractional ambiguity function and establish its relationship with other signal processing tools.\\

 \item To investigate the NFrAF's characteristics, including its conjugate symmetry, nonlinearity, scaling, inverse, marginal, and Moyal formulas, as well as time and frequency shifts.\\

    \item  To illustrate the effectiveness, single-component and bi-component LFM signals are identified using the proposed NFrAF..\\

\item The results of the simulations unambiguously indicate that the suggested NFrAF performs better at detection than both scaled AF and conventional AF.\\
\item To employ the proposed NFrAF as a feature representation for CNN-based signal classification and demonstrate its superiority over spectrogram and AF inputs.
\end{itemize}
\subsection{Paper Outlines}
This paper is organized as follows. Section \ref{sec 2} introduces the proposed NFrAF and presents a detailed examination of its fundamental mathematical properties.  Section \ref{sec 4} examines NFrAF applications to LFM signal detection and introduces the main contribution of this work, namely the integration of NFrAF into a CNN-based framework for signal classification, demonstrating improved performance over conventional time–frequency representations. Section \ref{sec 5}  concludes the paper.

 \section{The New Fractional Ambiguity Function}\label{sec 2}
This section will present the idea of the new fractional ambiguity function (NFrAF) and examine some of its key characteristics that are relevant to signal processing. Let us begin by defining the scaled version of classical ambiguity function(SAF).

\subsection{Scaled ambiguity function (SAF)}
Using the fractional instantaneous auto-correlation $\left({\bf x}\underset{k\frac{\tau}{2}}{\otimes}{\bf x}^*\right)(t)$ instead of the instantaneous auto-correlation function $\left({\bf x}\underset{\frac{\tau}{2}}{\otimes}{\bf x}^*\right)(t)$, Dar et al. \cite{scale1,scale2} presented a scaling form of the conventional AF. The formal definition of the scaling ambiguity function(SAF) for every finite energy signal ${\bf x}(t)$ in terms of the tensor product is
\begin{equation}\label{eqn swd}
\mathcal  A^k_{\bf x}(\tau,u)=\int_{\mathbb R}\left({\bf x}\underset{k\frac{\tau}{2}}{\otimes}{\bf x}^*\right)(t) e^{-iut}dt,
\end{equation}
where $k\in \mathbb Q^+$ and $\left({\bf x}\underset{k\frac{\tau}{2}}{\otimes}{\bf x}^*\right)(t) ={\bf x}\left(t+k\frac{\tau}{2}\right){\bf x}^*\left(t-k\frac{\tau}{2}\right)$
is the fractional instantaneous
auto-correlation function.

\subsection{Definition of the NFrAF}
Equation (\ref{eqn swd}) can be rewritten  as follows
\begin{eqnarray}
\nonumber \mathcal  A^k_{\bf x}(\tau,u)&=&\int_{\mathbb R}\left({\bf x}\underset{k\frac{\tau}{2}}{\otimes}{\bf x}^*\right)(t) e^{-iut}dt\\
\label{eqn svd modified}&=&\int_{\mathbb R}\left(\bar{\bf x}_u\underset{k\frac{\tau}{2}}{\otimes}\hat{{\bf x}}^*_u\right)(t) dt,
\end{eqnarray}
where
 \begin{equation}\label{fun fu}
\bar{\bf x}_u(t)={\bf x}(t)e^{-i\frac{u}{2}t} \quad \mbox{and} \quad \hat{{\bf x}}_u(t)= {\bf x}(t)e^{i\frac{u}{2}t}
\end{equation}
Replacing the FT kernel in  (\ref{fun fu}) with the FrFT kernel, we get
\begin{equation}
\label{fun f uA}\bar{\bf x}^\theta_{u}(t)={\bf x}(t)\mathcal K_{\theta}\left(t,\frac{u}{2}\right) \quad \mbox{and} \quad \hat{{\bf x}}^\theta_u(t)={\bf x}(t)\mathcal K_{\theta}\left(t,\frac{-u}{2}\right).
\end{equation}

By substituting $\bar{\bf x}_u(t)$ with $\bar{\bf x}^\theta_{u}(t)$ and $\hat{{\bf x}}_u(t)$ with $\hat{{\bf x}}^\theta_u(t)$ in (\ref{eqn svd modified}), we are able to obtain a new type of AF in the FrFT domain.
\begin{eqnarray}
\nonumber \mathcal  A^{\theta,k}_{\bf x}(\tau,u)&=&\int_{\mathbb R}\left(\bar{\bf x}^\theta_u\underset{k\frac{\tau}{2}}{\otimes}\hat{\bf x}^{\theta*}_u\right)(t)dt\\
\nonumber&=&\int_{\mathbb R}\mathcal K_\theta\left(t+k\frac{\tau}{2},\frac{-u}{2}\right)\left({\bf x}\underset{k\frac{\tau}{2}}{\otimes}{\bf x}^*\right)(t)\mathcal K^*_\theta\left(t-k\frac{\tau}{2},\frac{-u}{2}\right)dt\\
\label{a}&=&|\Omega_\theta|^2\int_{\mathbb R}\left({\bf x}\underset{k\frac{\tau}{2}}{\otimes}{\bf x}^*\right)(t)e^{it(k\tau\cot\theta-u\csc\theta)}dt.
\end{eqnarray}

The following definition results from using the aforementioned equation (\ref{a}).

\begin{definition}\label{def swdolct}
For the pair of parameters  $(\theta,k),$ the new fractional ambiguity function of a signal ${\bf x}(t)\in L^2(\mathbb{R})$   is defined as
\begin{equation}\label{eqn def cbqpwd}
\mathcal  A^{\theta,k}_{\bf x}(\tau,u)=|\Omega_\theta|^2\int_{\mathbb R}\left({\bf x}\underset{k\frac{\tau}{2}}{\otimes}{\bf x}^*\right)(t)e^{it(k\tau\cot\theta-u\csc\theta)}dt,
\end{equation}
where $k\in \mathbb Q^+$ and $\left({\bf x}\underset{k\frac{\tau}{2}}{\otimes}{\bf x}^*\right)(t) ={\bf x}\left(t+k\frac{\tau}{2}\right){\bf x}^*\left(t-k\frac{\tau}{2}\right)$
\end{definition}


It is clear that the NFrAF, the  SAF, and classical AF share the following relationships.
\begin{equation}
\mathcal  A^{\theta,k}_{\bf x}(\tau,u)=|\Omega_\theta|^2\mathcal  A^{k}_{\bf x}\left(\tau,u\csc\theta-k\tau\cot\theta\right)=\frac{|\Omega_\theta|^2}{k}\mathcal  A_{\bf x}\left(\tau,\frac{1}{k}u\csc\theta-\tau\cot\theta\right).
\end{equation}
Moreover,   the  cross-NFrAF of the finite energy signals ${\bf x}_1(t)$ and ${\bf x}_2(t)$ can be defined by
\begin{equation}\label{eqn swdolct cross}
\mathcal  A^{\theta}_{{\bf x}_1,{\bf x}_2}(\tau,u)=|\Omega_\theta|^2\int_{\mathbb R}\left({\bf x}_1\underset{k\frac{\tau}{2}}{\otimes}{\bf x}^*_2\right)(t)e^{it(k\tau\cot\theta-u\csc\theta)}dt.
\end{equation}
As a consequence, the proposed NFrAF \eqref{eqn def cbqpwd} retains the structural features of the fractional Fourier transform, while inheriting the key properties of the classical ambiguity function. Furthermore, its generality can be understood through the following special cases. When $\theta=\pi/2$, the NFrAF reduces to the scaled ambiguity function (SAF) given in \eqref{eqn swd}. In addition, for 
$k=1$, it further reduces to the classical ambiguity function.\\
Let us now look at a function that sums two Gaussian beams with centers at $t=0$ and $t=3$, respectively: $${\bf x}(t)= exp\left\{-\frac{t^2}{\sqrt{2}}\right\}+ exp\left\{-\frac{(t-3)^2}{\sqrt{2}}\right\}.$$  The traditional AF is shown in Fig.\ref{fig11}(a), which is the counter plot of NFrAF at $\theta=\pi/2$ and $k=1$. In Fig.\ref{fig11}(b), the SAF is shown by the counter plot of NFrAF at $\theta=\pi/2$ and $k=4$. For $\theta=\pi/3$ and $k=1$, the AF in the FrFT domain seen in Fig.\ref{fig11}(c) is represented by NFrAF.  In Figure \ref{fig11}(d)-(f), we can observe the proposed NFrAF. Thus, it is clear from the plots that  with the variation of the parameter pair  $(\theta,k)$ , the proposed NFrAF (\ref{eqn def cbqpwd})  has
a direct  effect on the cross terms in the
detection process.\\

\begin{figure}[!htbp]
\begin{framed}
\centering
\subfigure[ NFRAF of ${\bf x}(t)$ at $\theta=\pi/2$,$k=1$]
{\includegraphics[width=.49\linewidth]{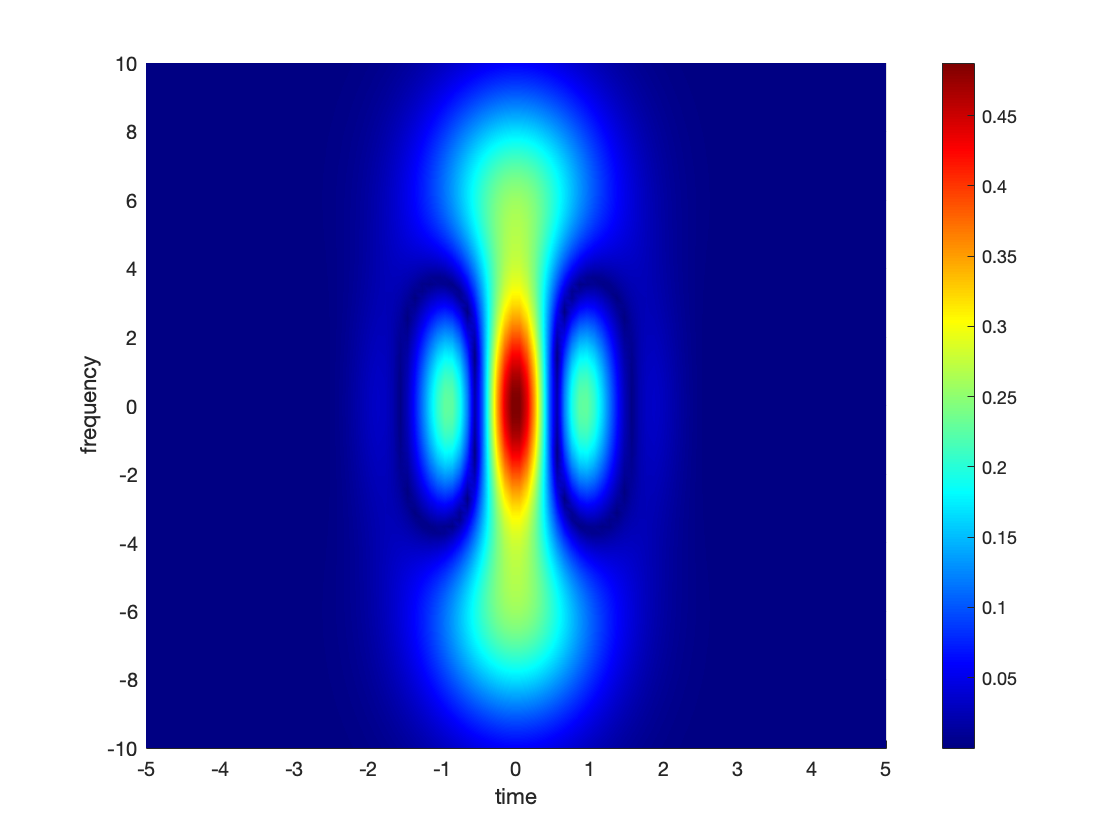}}\hfill
\subfigure[ NFRAF of ${\bf x}(t)$ at $\theta=\pi/2$,$k=4$]
{\includegraphics[width=.49\linewidth]{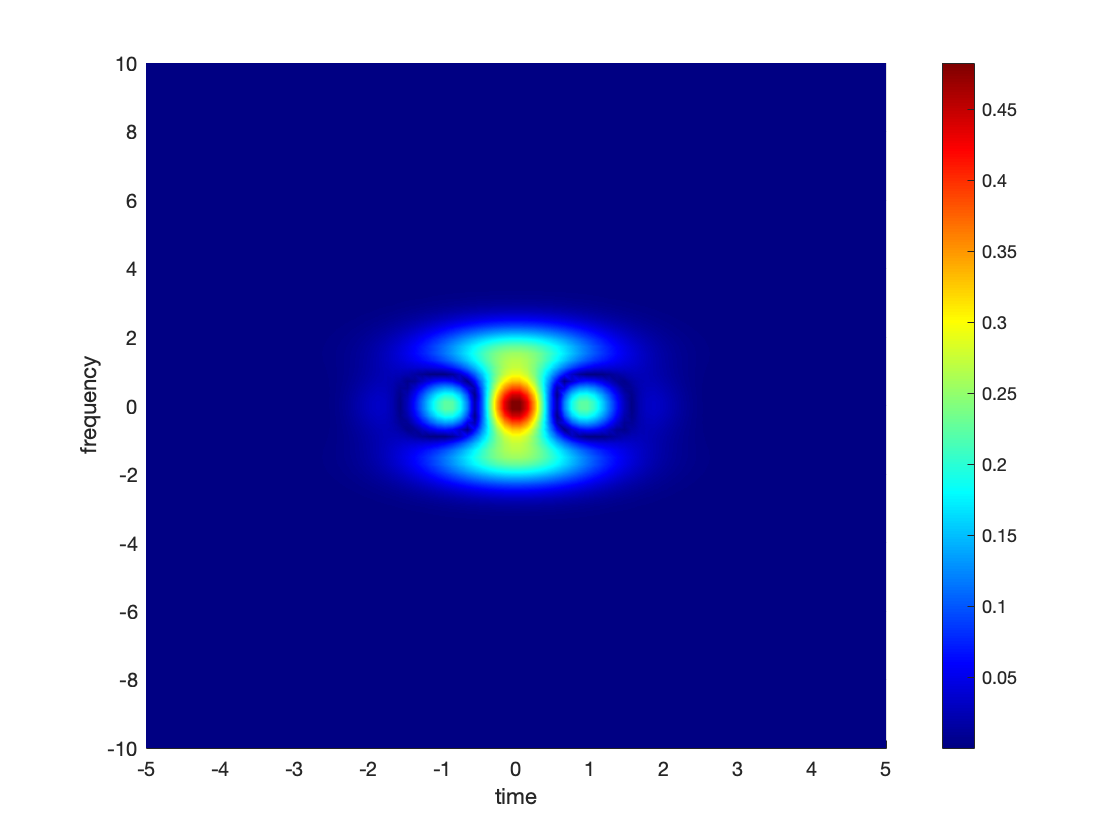}}\hfill
\subfigure[NFRAF of ${\bf x}(t)$ at $\theta=\pi/3$,$k=1$]
 {\includegraphics[width=.49\linewidth]{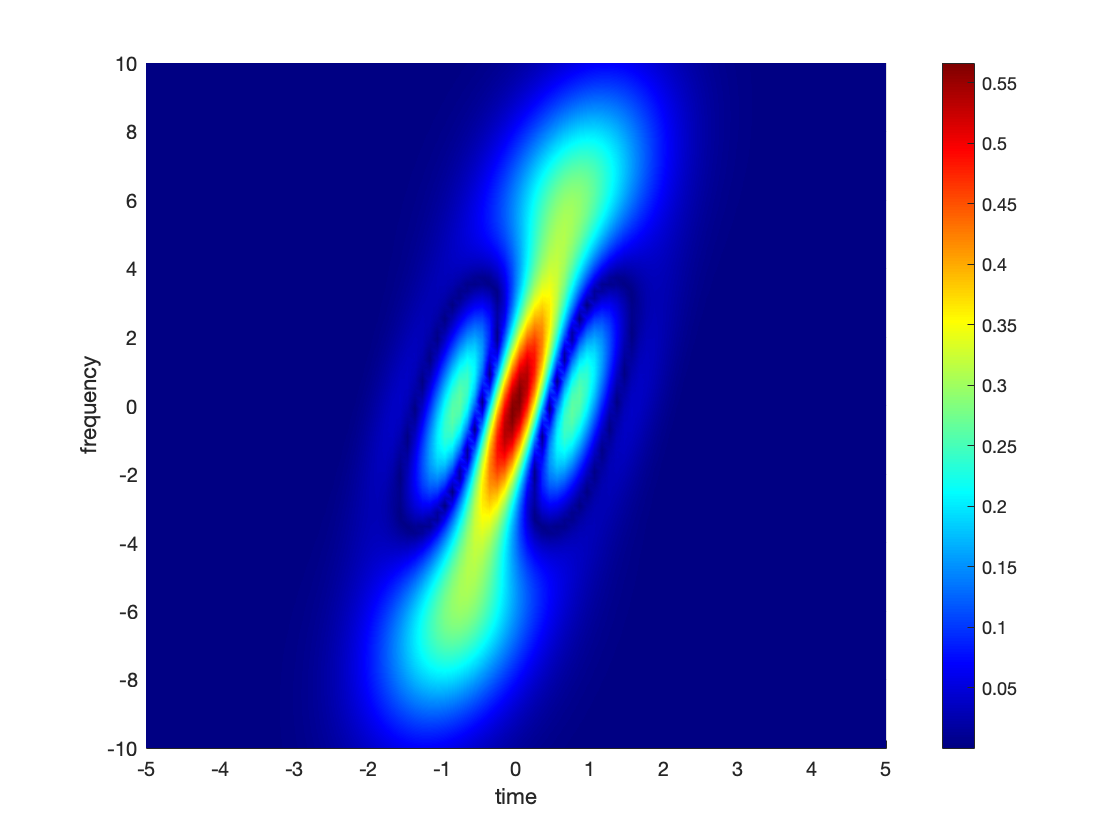}}\hfill
\subfigure[NFRAF of ${\bf x}(t)$ at $\theta=\pi/3$,$k=2$]
 {\includegraphics[width=.49\linewidth]{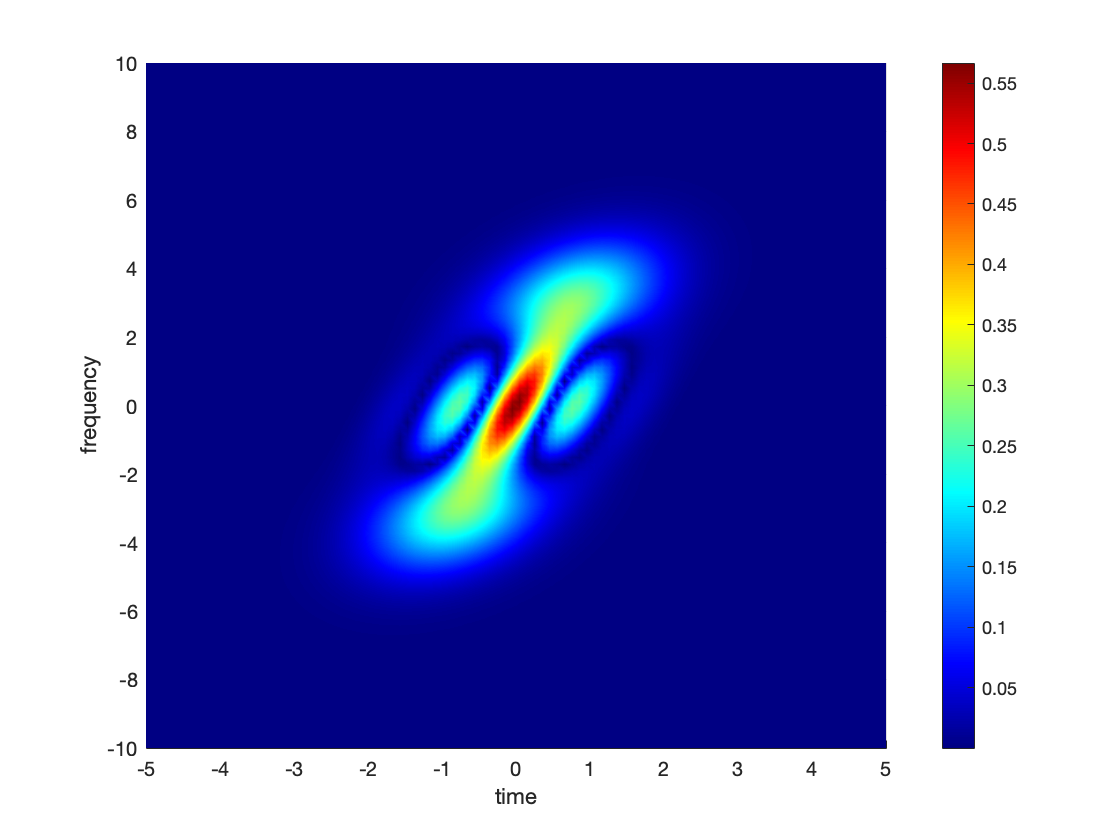}}\hfill
\subfigure[NFRAF of ${\bf x}(t)$ at $\theta=\pi/4$,$k=4$]
 {\includegraphics[width=.49\linewidth]{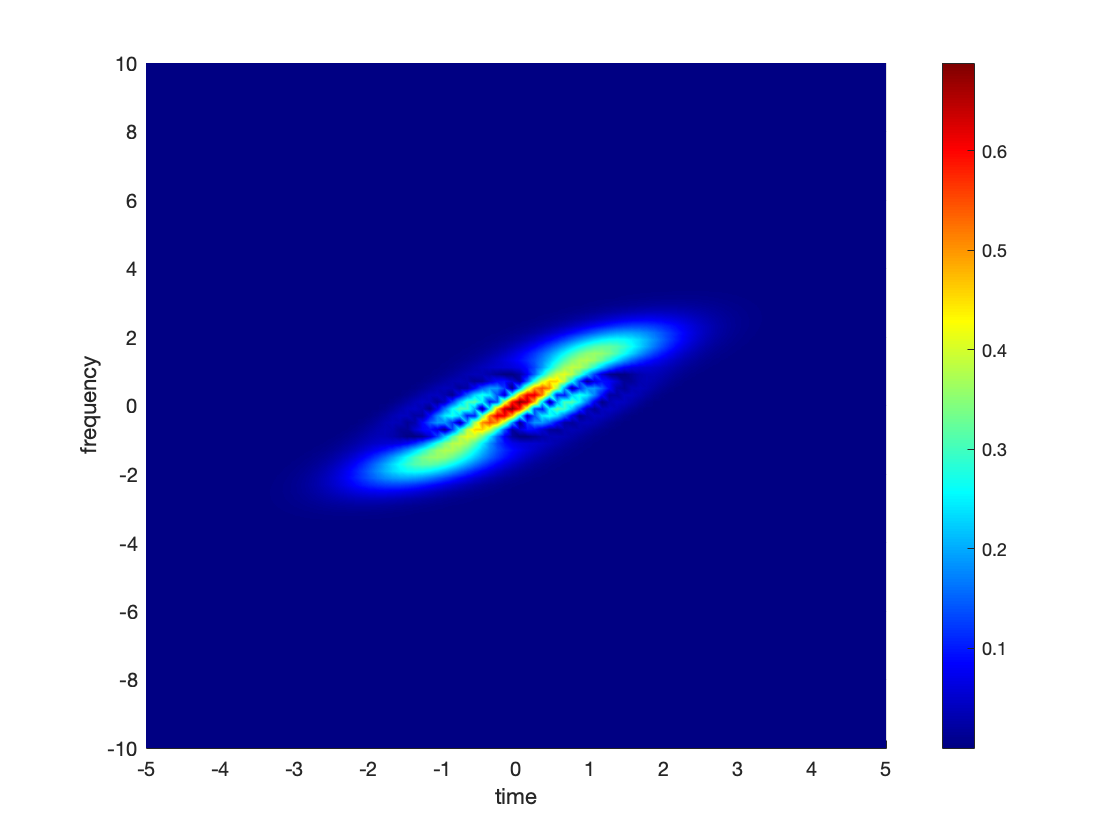}}\hfill
\subfigure[NFRAF of ${\bf x}(t)$ at $\theta=\pi/6$,$k=3$]
 {\includegraphics[width=.49\linewidth]{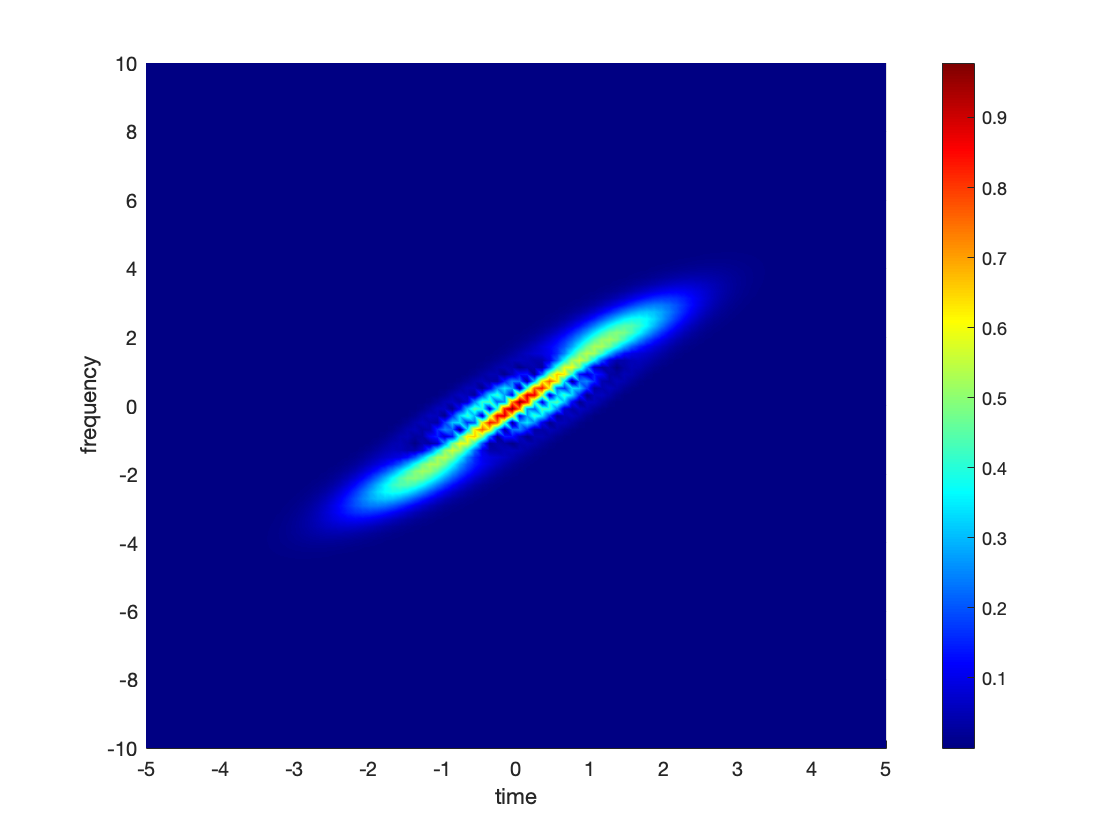}}\hfill
\caption{The comparison of the absolute value of  NFrAF for the detection  of sum of two Gaussian beams corresponding to  different choices of parameters $\theta$ and $k.$  }\label{fig11}
\end{framed}
\end{figure}
We will first determine how NFrAF interacts with SAF and the short-time Fourier transform before looking at some of the key characteristics of NFrAF (\ref{eqn def cbqpwd}).\\

\subsection{Relationship with SAF}\,\\
For the  signal ${\bf h}(t)=e^{-i\frac{\cot\theta}{2}t^2}{\bf x}(t)$,  the following relation holds:
$\mathcal  A^{\theta,k}_{\bf h}(\tau,u)
={|\Omega_\theta|^2}\mathcal  A^k_{\bf x}(\tau,u\csc\theta).$
\begin{proof}
We have from (\ref{eqn def cbqpwd})
\begin{eqnarray*}
&&\mathcal  A^{\theta,k}_{\bf h}(\tau,u)dt\\
&&=|\Omega_\theta|^2\int_{\mathbb R}\left({\bf h}\underset{k\frac{\tau}{2}}{\otimes}{\bf h}^*\right)(t)e^{it(k\tau\cot\theta-u\csc\theta)}dt\\
&&=|\Omega_\theta|^2\int_{\mathbb R}\left({\bf x}\underset{k\frac{\tau}{2}}{\otimes}{\bf x}^*\right)(t)e^{-i\frac{\cot\theta}{2}\left(t+k\frac{\tau}{2}\right)^2}e^{i\frac{\cot\theta}{2}\left(t-k\frac{\tau}{2}\right)^2}e^{it(k\tau\cot\theta-u\csc\theta)}dt\\
&&=|\Omega_\theta|^2\int_{\mathbb R}\left({\bf x}\underset{k\frac{\tau}{2}}{\otimes}{\bf x}^*\right)(t)e^{-itu\csc\theta}dt\\
&&=|\Omega_\theta|^2\mathcal  A^k_{\bf x}(\tau,u\csc\theta).
\end{eqnarray*}
This completes the proof.
\end{proof}
{\footnotesize
\begin{table}[h!]
\caption{Properties of NFrAF}\label{tab1}
\begin{tabular}{lll}
  \hline\\
  &Property&Mathematical formula\\
  \hline\\\\
  &Conjugate  properties& $\overline{\mathcal  A^{\theta,k}_{{\bf x}}(\tau,u)}=\mathcal  A^{\theta,k}_{{\bf x}}(-\tau,-u).$
\\\\
  &&$\mathcal  A^{\theta,k}_{P{\bf x}(t)}(\tau,u)=\mathcal  A^{\theta,k}_{{\bf x}(t)}(-\tau,-u),$ where $P{\bf x}={\bf x}(-t).$\\\\
  &Non-linearity property&$\mathcal  A^{\theta,k}_{{\bf x}_1+{\bf x}_2}(t,u)=\mathcal  A^{\theta,k}_{{\bf x}_1}(\tau,u)+\mathcal  A^{\theta,k}_{{\bf x}_2}(\tau,u)$\\
  &&\qquad\qquad\qquad$+\mathcal  A^{\theta,k}_{{\bf x}_1,{\bf x}_2}(\tau,u)+\mathcal  A^{\theta,k}_{{\bf x}_2,{\bf x}_1}(\tau,u).$\\\\

  &Time shift&$\mathcal  A^{\theta,k}_{{\bf x}(t-t_0)}(t,u)=e^{it_0(k\tau\cot\theta-u\csc\theta)}\mathcal  A^{\theta,k}_{\bf x}\left(\tau,u\right).$\\\\
  &Frequency shift&$ \mathcal  A^{\theta,k}_{\tilde {\bf x}}(\tau,u)=\mathcal  A^{\theta,k}_{\bf x}\left(t,u-ku_0\sin\theta\right),$ where $\tilde{\bf x}={\bf x}(t)e^{iu_0t}.$\\\\
  &Scaling property&$\mathcal  A^{\theta,k}_{\hat {\bf x}}(\tau,u)=\mathcal  A^{\theta,k}_ {\bf x}\left(\lambda \tau,u\frac{\csc\theta}{\lambda \csc\phi}\right),$
where $\hat {\bf x}=\sqrt{\lambda}{\bf x}(\lambda t)$ \& $\phi=arc cot\left(\frac{\cot\theta}{\lambda^2}\right).$\\\\
  
  &Inverse property&${\bf x}(t)=\frac{e^{-i\frac{t^2}{2}\cot\theta}}{{\bf x}^*(0)\csc\theta}\int_{\mathbb R}\mathcal  A^{\theta,k}_{ {\bf x}}\left(\tau,u\right)e^{i\tau u\csc\theta}du.$\\\\
&Time marginal property&$\int_{\mathbb R}\mathcal  A^{\theta,k}_{{\bf x}}(\tau,u)du=\left({\bf x}\underset{k\frac{\tau}{2}}{\otimes}{\bf x}^*\right)(0).$ \\\\
  &Frequency marginal property&$ \int_{\mathbb R}\mathcal  A^{\theta,k}_{\bf x}(t,u)d\tau=\frac{1}{k}\mathcal F^{\theta}[{\bf x}]\left(\frac{u}{2}\right)\mathcal F^{\theta^*}[{\bf x}]\left(\frac{-u}{2}\right).$\\\\
  &Moyal formula&$\int_{\mathbb R}\int_{\mathbb R}\mathcal  A^{\theta,k}_ {{\bf x}_1}(\tau,u)\left[\mathcal  A^{\theta,k}_ {{\bf x}_2}(\tau,u)\right]^*d\tau du=\frac{|\Omega_\theta|^2}{k}\left|\langle {\bf x}_1,{\bf x}_2\rangle\right|^2.$\\\\
  \hline
\end{tabular}
\end{table}
}

Next, we recall the definition of short-time Fourier transform (STFT).
\begin{definition}
The STFT of a signal ${\bf x}(t)$ with respect to a window function ${\bf h}(t)$ is defined as \cite{fra11,fra12}
\begin{equation}
\mathcal  V_{\bf h}[{\bf x}(t)](\tau,u)=\int_{\mathbb R}={\bf x}(y){\bf h}^*(y-\tau)e^{-iuy}dy.
\end{equation}
\end{definition}
Now, we can establish the relationship between NFrAF and STFT.

\subsection{Relationship with STFT}\,\\
The  NFrAF of a signal ${\bf x}(t)$ can be expressed by  the STFT by the following relationship:
\begin{equation}\label{stftrel}
\mathcal  A^{\theta,k}_{\bf x}\left(\frac{\tau}{k},u{\sin\theta}+\tau{\cos\theta}\right)={|\Omega_\theta|^2}e^{iu\frac{t}{2}}\mathcal  V_{\bf h}[{\bf x}](\tau,u),\quad \mbox{where}\quad {\bf h}(t)={\bf x}(t).
\end{equation}

The specific proof  of Eq.(\ref{stftrel}) is shown in {\bf Appendix A}.
The theorem that follows helps us determine the essential relationship between the proposed NFrAF and the  fractional scaled Wigner distribution ($FrSWD$) (see reference \cite{fr222c}).
\begin{theorem} The 2D Fourier transform of the NFrAF gives the  FrSWD of a signal ${\bf x}(t)\in L^2(\mathbb R)$  i.e.,
\begin{equation}
\int_{\mathbb R^2}\mathcal  A^{\theta,k}_{\bf x}(\tau,z)e^{\frac{i}{b}(z t-u\tau)}dz d\tau=\frac{1}{|\Omega_\theta|^2}{[FrSWD]}^{\theta,k}_{\bf x}(t,u).
\end{equation}
\end{theorem}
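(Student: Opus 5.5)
Substitute Definition \ref{def swdolct} into the left-hand side and carry out the integrations one variable at a time. The FrSWD of \cite{fr222c} is not restated in this paper, so the first step is to fix its form. For the statement to be consistent, it must be
\[
{[FrSWD]}^{\theta,k}_{\bf x}(t,u)=\int_{\mathbb R}\left({\bf x}\underset{k\frac{\tau}{2}}{\otimes}{\bf x}^*\right)(t)\,e^{\frac{i}{2}\cdots}\,e^{-iu\tau\csc\theta}d\tau
\]
up to its chirp and normalising factors. The symbol $b$ in the exponent is read as the parameter $\sin\theta$, i.e.\ $1/b=\csc\theta$, which is the FrFT analogue of the LCT parameter. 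The factor $|\Omega_\theta|^2$ in the definition of the NFrAF is the constant that is meant to cancel against the $1/|\Omega_\theta|^2$ on the right.

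The left-hand side becomes the triple integral
\[
|\Omega_\theta|^2\int_{\mathbb R^3}{\bf x}\!\left(s+\tfrac{k\tau}{2}\right){\bf x}^*\!\left(s-\tfrac{k\tau}{2}\right)e^{is(k\tau\cot\theta-z\csc\theta)}e^{i\csc\theta(zt-u\tau)}\,ds\,dz\,d\tau .
\]
The $z$-integration is done first. It gives $\int e^{iz\csc\theta(t-s)}dz=2\pi\sin\theta\,\delta(s-t)$ for $0<\theta<\pi$, and $|\sin\theta|$ in general. The $s$-integral then collapses to $s=t$, leaving
\[
2\pi\sin\theta\,|\Omega_\theta|^2\int_{\mathbb R}{\bf x}\!\left(t+\tfrac{k\tau}{2}\right){\bf x}^*\!\left(t-\tfrac{k\tau}{2}\right)e^{i\tau(kt\cot\theta-u\csc\theta)}d\tau .
\]
This is the scaled Wigner-type integral with chirp phase $e^{ik t\tau\cot\theta}$ and frequency kernel $e^{-iu\tau\csc\theta}$, which is exactly the structure of the FrSWD. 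The final step matches constants. Since $|\Omega_\theta|^2=\frac{|\csc\theta|}{2\pi}$, the prefactor $2\pi|\sin\theta||\Omega_\theta|^2$ equals $1$. The remaining integral is then identified with $\frac{1}{|\Omega_\theta|^2}{[FrSWD]}^{\theta,k}_{\bf x}(t,u)$, provided the FrSWD of \cite{fr222c} carries the normalisation $|\Omega_\theta|^2$ in front, analogous to the definition of the NFrAF. If it carries a different constant, or the factor $k$ from the substitution $\tau\mapsto k\tau$, the constant is adjusted accordingly.

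The main obstacle is not analytic but bookkeeping. The FrSWD's precise normalisation and the meaning of $b$ must be pinned down so that the constant comes out as $1/|\Omega_\theta|^2$ rather than, say, $|\Omega_\theta|^2/k$. To settle this, I would write out the FrSWD from \cite{fr222c} in the same tensor-product notation and compare exponents term by term, checking the chirp term $kt\tau\cot\theta$ in particular. A secondary point is rigour: the $z$-integral is a distributional identity. It is justified by first assuming ${\bf x}$ is a Schwartz function, so that Fubini and Fourier inversion in $z$ hold, and then extending by density in $L^2(\mathbb R)$.
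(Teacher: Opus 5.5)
Your proposal is correct and is essentially the paper's own proof: read $1/b$ as $\csc\theta$, substitute the definition, and integrate in $z$ first to produce $\delta(t-y)$. Then collapse the $y$-integral and identify the remaining $\tau$-integral $\int_{\mathbb R}({\bf x}\underset{k\frac{\tau}{2}}{\otimes}{\bf x}^*)(t)e^{i\tau(kt\cot\theta-u\csc\theta)}d\tau$ with $|\Omega_\theta|^{-2}[FrSWD]^{\theta,k}_{\bf x}(t,u)$. The paper absorbs the constant by writing $|\Omega_\theta|^2\int_{\mathbb R}e^{i\csc\theta(t-y)z}dz=\delta(t-y)$, which is exactly your check $2\pi|\sin\theta|\,|\Omega_\theta|^2=1$. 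It also relies on the same unstated $|\Omega_\theta|^2$ normalization of the FrSWD that you make explicit.
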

\begin{proof}
 From Definition \ref{def swdolct}, we have
 \begin{eqnarray*}
 &&\int_{\mathbb R^2}\mathcal  A^{\theta,k}_{\bf x}(\tau,z)e^{{i}(z\csc\theta t-u\csc\theta\tau)}dz d\tau\\
 &&=|\Omega_\theta|^2\int_{\mathbb R^3} \left({\bf x}\underset{k\frac{\tau}{2}}{\otimes}{\bf x}^*\right)(y)e^{{iy}(k\tau\cot\theta-z\csc\theta)}e^{{i}(z\csc\theta t-u\csc\theta\tau)}dydz d\tau\\
 &&=\int_{\mathbb R^2}\left\{ \left({\bf x}\underset{k\frac{\tau}{2}}{\otimes}{\bf x}^*\right)(y)e^{{i}(ky\cot\theta-u\csc\theta)\tau}\left(|\Omega_\theta|^2\int_{\mathbb R}e^{{i}\csc\theta(t-y)z}dz\right)\right\}dyd\tau\\
  &&=\int_{\mathbb R^2}\left\{ \left({\bf x}\underset{k\frac{\tau}{2}}{\otimes}{\bf x}^*\right)(y)e^{{i}(ky\cot\theta-u\csc\theta)\tau}\delta(t-y)\right\}dyd\tau\\
   &&=\int_{\mathbb R} \left({\bf x}\underset{k\frac{\tau}{2}}{\otimes}{\bf x}^*\right)(y)e^{{i}(ky\cot\theta-u\csc\theta)\tau}d\tau\int_{\mathbb R}\delta(t-y)dy\\
 &&=\frac{1}{|\Omega_\theta|^2}[FrSWD]^{\theta,k}_{\bf x}(y,u).
 \end{eqnarray*}
 Which completes the proof.
\end{proof}

\subsection{Properties of NFrAF}\,\\
We summarize the essential properties of the  proposed NFrAF in this subsection, as these properties are indispensable for signal representation and analysis. For convenience, all properties are listed {\bf Table \ref{tab1}}, and their rigorous proofs are presented in {\bf Appendix A.}

\section{Applications}\label{sec 4}
The proposed NFrAF has great promise for LFM signal detection and classification as a generalization of classical and scaled ambiguity functions. In this section, we first demonstrate that the NFrAF can detect and localize both mono-component and multi-component LFM signals, which is a prerequisite for its application in data-driven classification, using straightforward illustrative examples. After that, a CNN uses the generated NFrAF representations as input features to evaluate how well they classify signals.
\subsection{Signal detection} Here, simulations are utilized to identify the one-component and multi-component LFM signals, respectively, using the proposed NFrAF.
\begin{itemize}

\begin{figure}[!htbp]
\centering
\begin{framed}
\begin{minipage}[b]{0.48\linewidth}
	\centering
	\includegraphics[width=\linewidth]{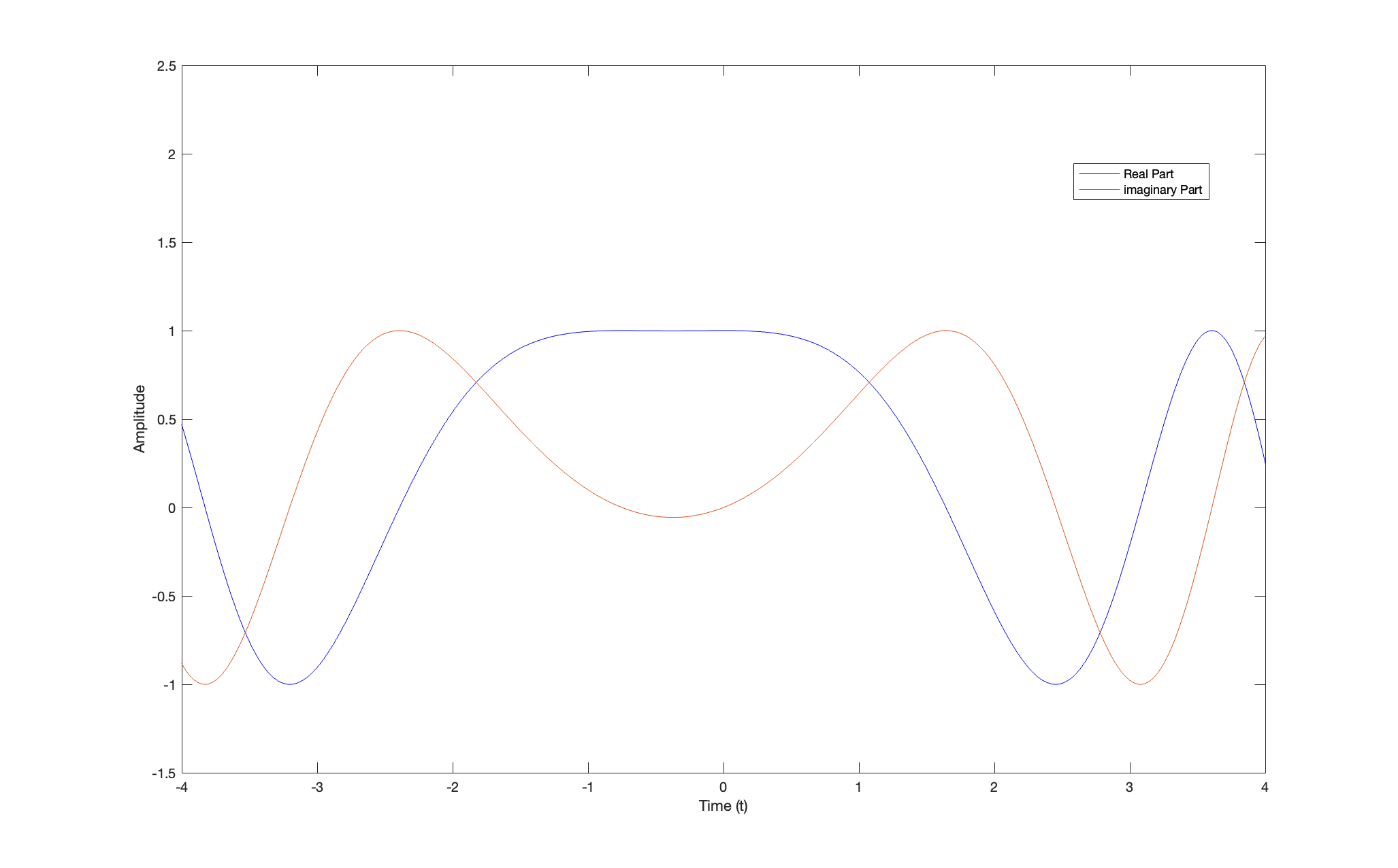}
	{\rm (a)  }
\end{minipage}
\hfill
\begin{minipage}[b]{0.48\linewidth}
	\centering
	\includegraphics[width=\linewidth]{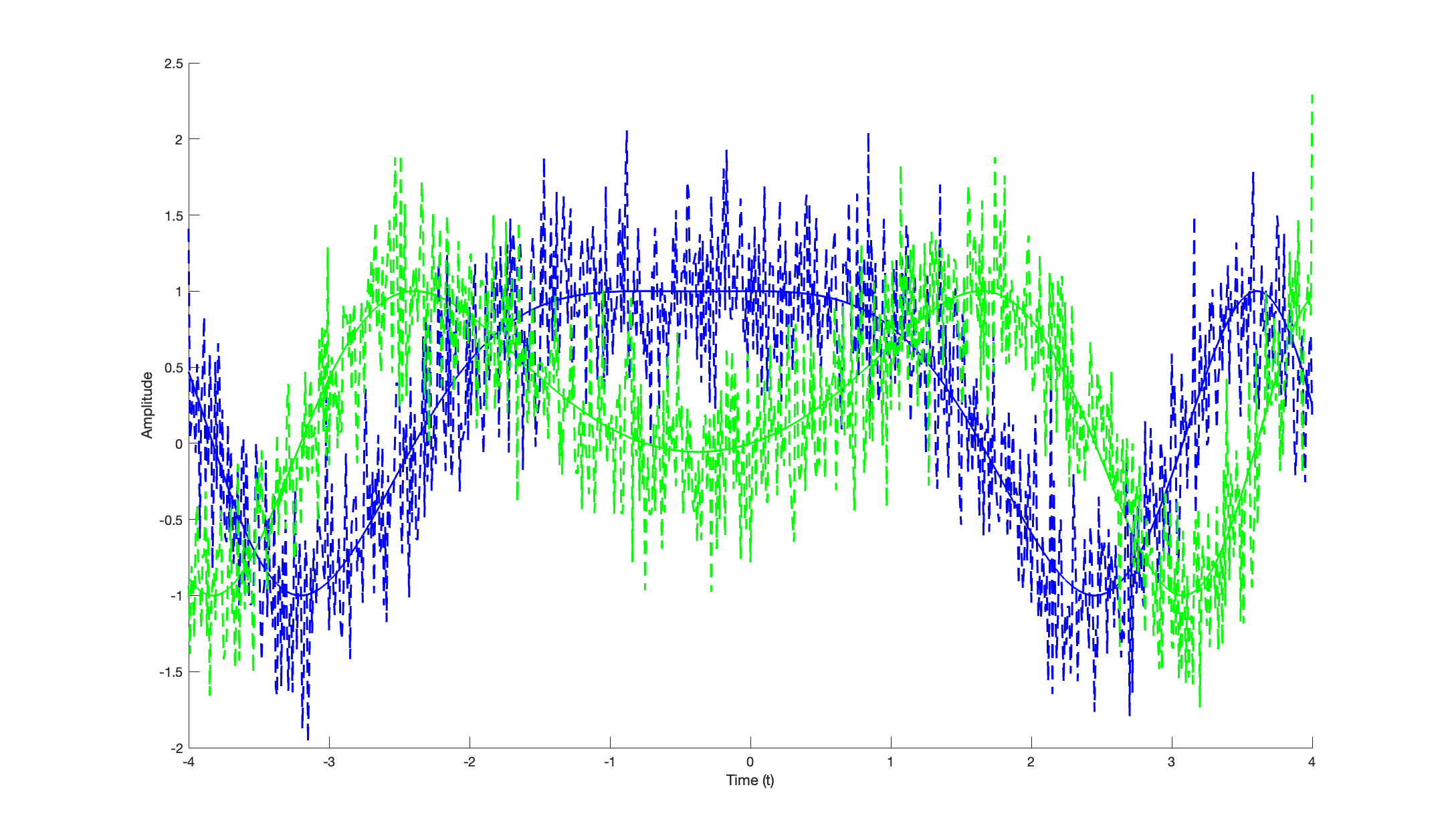}
	{\rm (b)}
\end{minipage}

\caption{\small  (a)Real and Imaginary parts of mono-component signal ${\bf x}(t)=e^{i(0.3t+0.4t^2)}$(b) Real and imaginary parts of the same signal corrupted by additive noise  at $5$dB SNR }
\label{S3F4}
\end{framed}
\end{figure}

    


\item {\bf One component LFM signal:} Consider a one-component LFM signal as\\
\begin{equation} \label{lmf 1}
{\bf x}(t)=\Lambda_0e^{i(\mu_0t+\nu_0t^2)}, \quad -\frac{T}{2}\le t\le \frac{T}{2}.
\end{equation}
where $\Lambda_0,$ $\mu_0$ and $\nu_0$ represent the amplitude, initial frequency and frequency rate of ${\bf x}(t)$, respectively. Hence by Definition \ref{def swdolct}, we obtain
\begin{eqnarray}
\nonumber &&\mathcal  A^{\theta,k}_{\bf x}(\tau,u)\\
\nonumber&&=|\Omega_\theta|^2\int_{-\frac{T}{2}}^{\frac{T}{2}}\left({\bf x}\underset{k\frac{\tau}{2}}{\otimes}{\bf x}^*\right)(t)e^{it(k\tau\cot\theta-u\csc\theta)}dt \\
\nonumber &&=|\Omega_\theta|^2\int_{-\frac{T}{2}}^{\frac{T}{2}}\Lambda_0e^{i\left[\mu_0\left(t+k\frac{\tau}{2}\right)+\nu_0\left(t+k\frac{\tau}{2}\right)^2\right]}\Lambda^*_0e^{-i\left[\mu_0\left(t-k\frac{\tau}{2}\right)+\nu_0\left(t-k\frac{\tau}{2}\right)^2\right]}e^{it(k\tau\cot\theta-u\csc\theta)}dt \\
\nonumber &&=|\Omega_\theta|^2|\Lambda_0|^2e^{ik\mu_0\tau}\int_{-\frac{T}{2}}^{\frac{T}{2}}e^{i\left[k(2\nu_0+\cot\theta )\tau-u\csc\theta\right]t}dt\\
\label{lmf2} &&=|\Omega_\theta|^2|\Lambda_0|^2 Te^{ik\mu_0\tau} sinc\left\{\frac{T}{2}\left[k(2\nu_0+\cot\theta )\tau-u\csc\theta\right]\right\}.
\end{eqnarray}
As can be shown from (\ref{lmf2}), the NFrAF of a one component signal ${\bf x}(t)$ given in (\ref{lmf 1}) can produce impulses in the $(\tau,u)$ plane at a straight line\\ $\left[k(2\nu_0+\cot\theta )\tau-u\csc\theta=0\right],$ which depends on parameters $\theta$ and $k$. The ability to select the parameter $\theta$ and scaling factor $k$ makes it clear that using NFrAF to detect one-component LFM signals is very beneficial and effective.\\
{\bf Numerical example.} Let us take a  one-component LFM signal  as \begin{equation}\label{mosig}
 {\bf x}(t)=e^{i(0.3t+0.4t^2)},\quad T=10
  \end{equation}
 then with the help of (\ref{lmf2}) and taking $\theta=\frac{\pi}{4}$ , the  NFrAF of one-component signal (\ref{mosig})  at $k=1,2,\frac{1}{2}$ are  obtained as
\begin{eqnarray}
\label{mofrwdpi4k1}\mathcal  A^{\frac{\pi}{4},1}_{\bf x}(\tau,u)&=&\frac{5\sqrt{2}}{\pi}e^{i0.3\tau}\left[sinc(9\tau-5\sqrt{2}u)\right]
\end{eqnarray}
\begin{eqnarray}
\label{mofrwdpi4k2}\mathcal  A^{\frac{\pi}{4},2}_{\bf x}(\tau,u)&=&\frac{5\sqrt{2}}{\pi}e^{i0.6\tau}\left[sinc(18\tau-5\sqrt{2}u)\right]
\end{eqnarray}
and
\begin{eqnarray}
\label{monofrwdpi4k1/2}\mathcal  A^{\frac{\pi}{4},\frac{1}{2}}_{\bf x}(\tau,u)&=&\frac{5\sqrt{2}}{\pi}e^{i0.15\tau}\left[sinc(4.5\tau-5\sqrt{2}u)\right]
\end{eqnarray}
respectively. Similarly, taking $\theta=\frac{\pi}{2}$, the NFrAF of  signal  (\ref{mosig}) at scaling parameters $k=1,2,\frac{1}{2}$  are obtained by
\begin{eqnarray}
\label{mofrwdpi2k1}\mathcal  A^{\frac{\pi}{2},1}_{\bf x}(\tau,u)&=&\frac{5}{\pi}e^{i0.3\tau}\left[sinc(4\tau-5u)\right]
\end{eqnarray}
\begin{eqnarray}
\label{mofrwdpi2k2}\mathcal  A^{\frac{\pi}{2},2}_{\bf x}(\tau,u)&=&\frac{5}{\pi}e^{i0.6\tau}\left[sinc(8\tau-5u)\right]
\end{eqnarray}
and
\begin{eqnarray}
\label{monofrwdpi2k1/2}\mathcal  A^{\frac{\pi}{2},\frac{1}{2}}_{\bf x}(\tau,u)&=&\frac{5}{\pi}e^{i0.15\tau}\left[sinc(2\tau-5u)\right]
\end{eqnarray}
respectively.
Figs.\ref{fig1}-Fig.\ref{fig3} show the detection and parameter estimation for the one-component LFM signal (\ref{mosig}) with SNR=10dB using the NFrAF for $\theta=\frac{\pi}{4},\frac{\pi}{2}$ and $k=1,2,\frac{1}{2}$.
 The fact that $\theta=\frac{\pi}{2}$ and $k=1$ implies that Fig.\ref{fig1}(c)-(d) can be understood as classical AF.The classical FrAF is represented as $k=1$ in Fig.\ref{fig1}(a)-(b). The SAF can be seen in Figs.\ref{fig2}(c)-(d) and Fig.\ref{fig3}(c)-(d) since we selected $\theta=\frac{\pi}{2}$ and $k\ne 1$. Fig.\ref{fig2}(a)-(b) and Fig.\ref{fig3}(a)-(b) are obtained, respectively, by NFrAF. As a result, we can see from Figs.\ref{fig1}–\ref{fig3} that NFrAF produces an ideal time–frequency resolution in the FrFT domain.

\begin{figure}[h!]
\begin{framed}
\centering
     \subfigure[The new NFrAF of ${\bf x}(t)$ at $\theta=\frac{\pi}{4}, k=1$]{\includegraphics[width=0.4\textwidth]{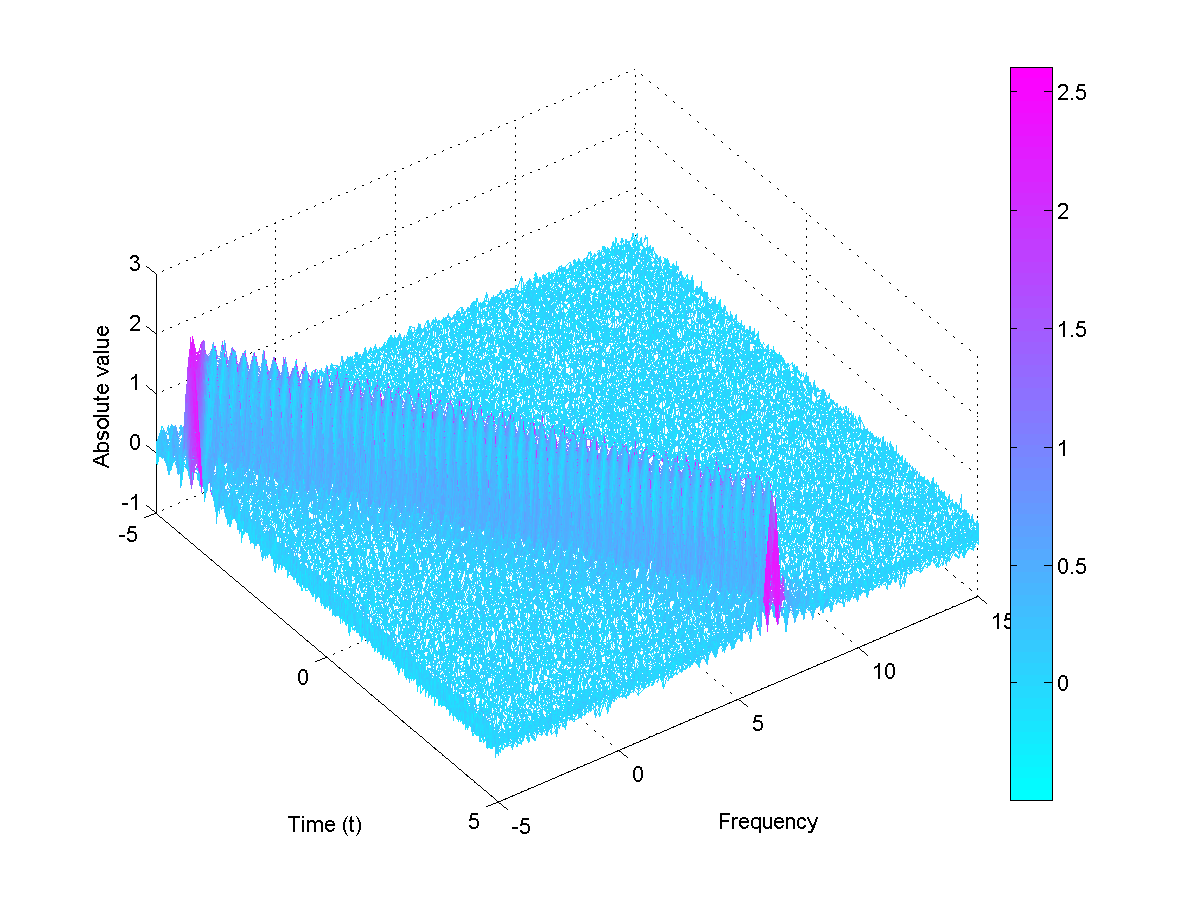}}
    \subfigure[Contour picture of new NFrAF of ${\bf x}(t)$ at $\theta=\frac{\pi}{4},k=1$]{\includegraphics[width=0.4\textwidth]{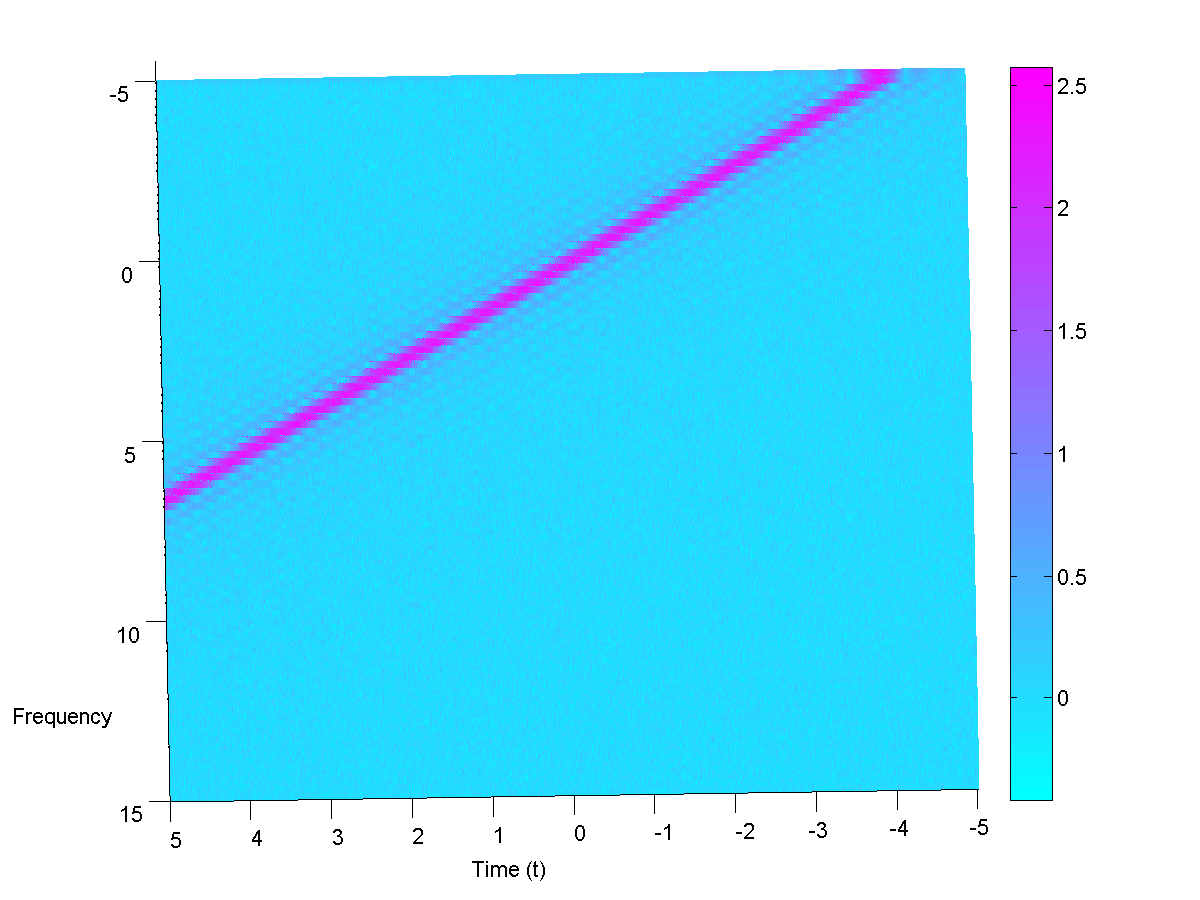}}
      \subfigure[The new NFrAF of ${\bf x}(t)$ at $\theta=\frac{\pi}{2}, k=1$]{\includegraphics[width=0.4\textwidth]{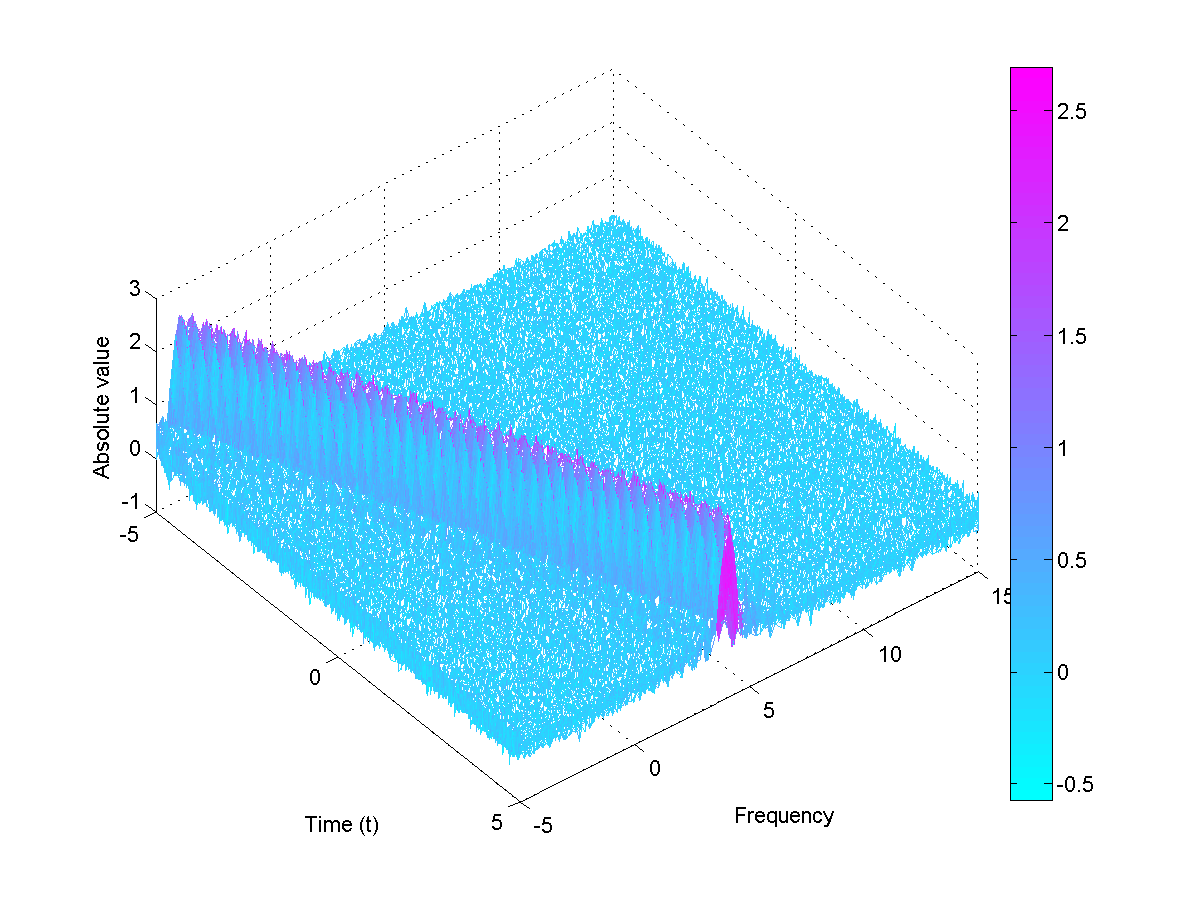}}
    \subfigure[Contour picture of new NFrAF of ${\bf x}(t)$ at $\theta=\frac{\pi}{2},k=1$]{\includegraphics[width=0.4\textwidth]{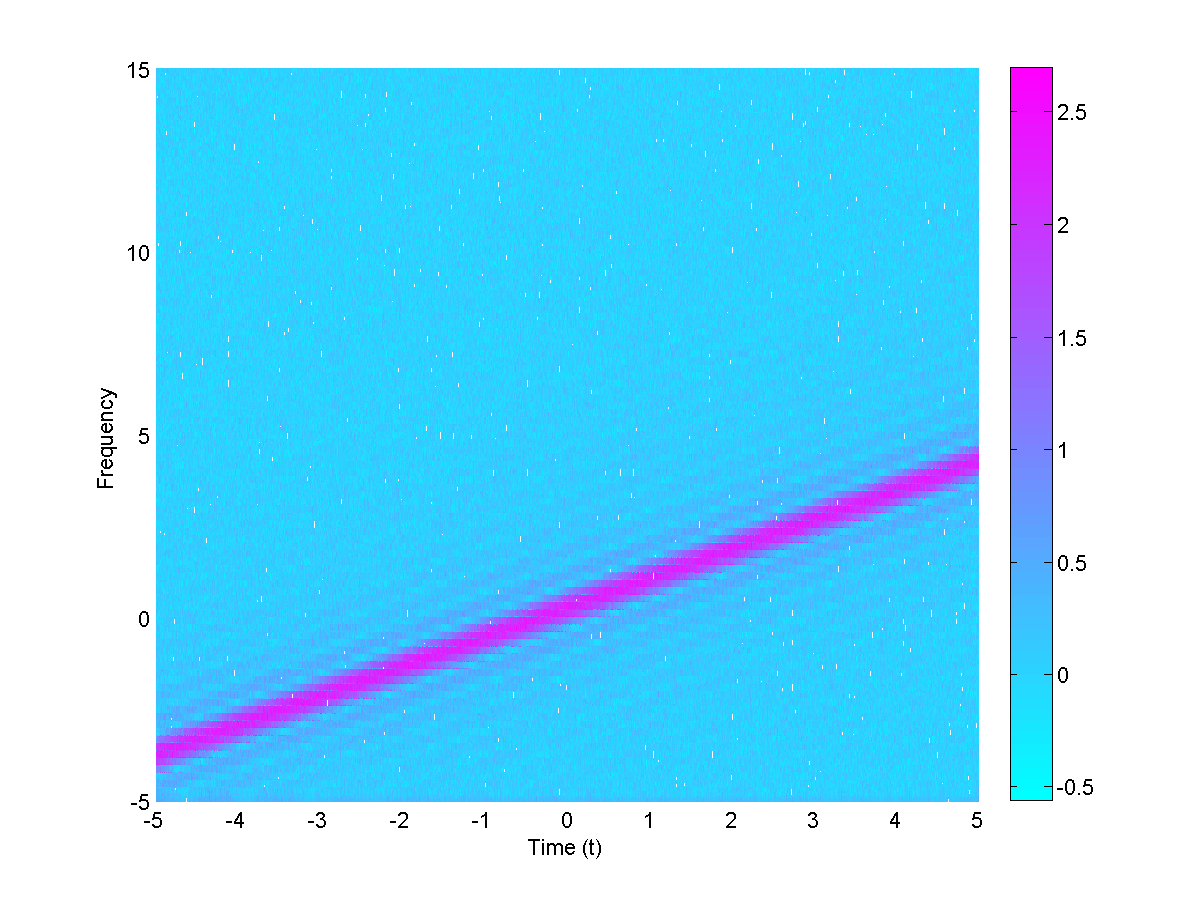}}
     \caption{Absolute value of the new NFrAF   of a mono-component signal ${\bf x}(t)=e^{i(0.2t+0.3t^2)}$ corresponding to particular choices of $\theta$ and $k$   with
$\Lambda_0=1,$  $T=10$ and SNR =10dB. }\label{fig1}
\end{framed}
\end{figure}

\begin{figure}[!htbp]
\begin{framed}
\centering
\subfigure[The new NFrAF of ${\bf x}(t)$ at $\theta=\frac{\pi}{4},k=2$]{\includegraphics[width=0.4\textwidth]{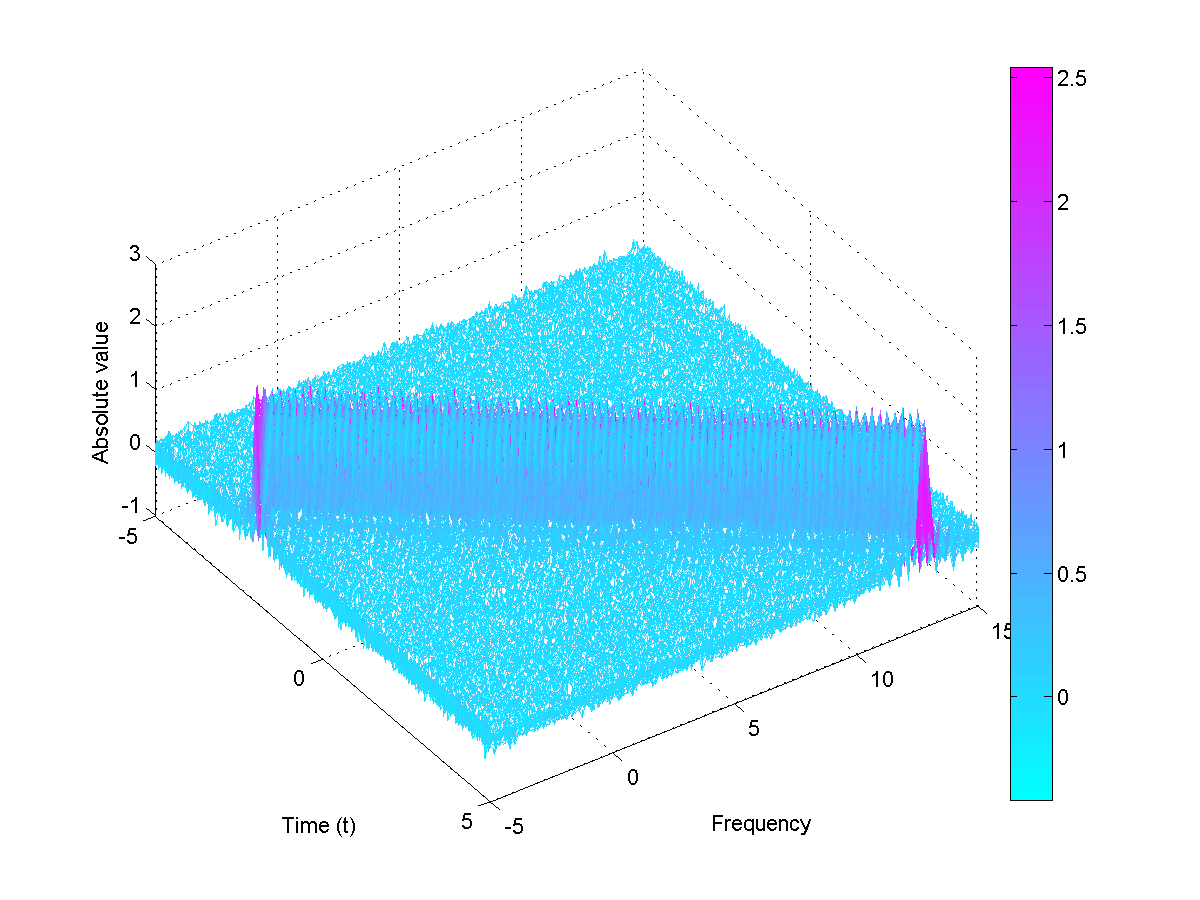}}
    \subfigure[Contour picture of new NFrAF of ${\bf x}(t)$ at $\theta=\frac{\pi}{4},k=2$]{\includegraphics[width=0.4\textwidth]{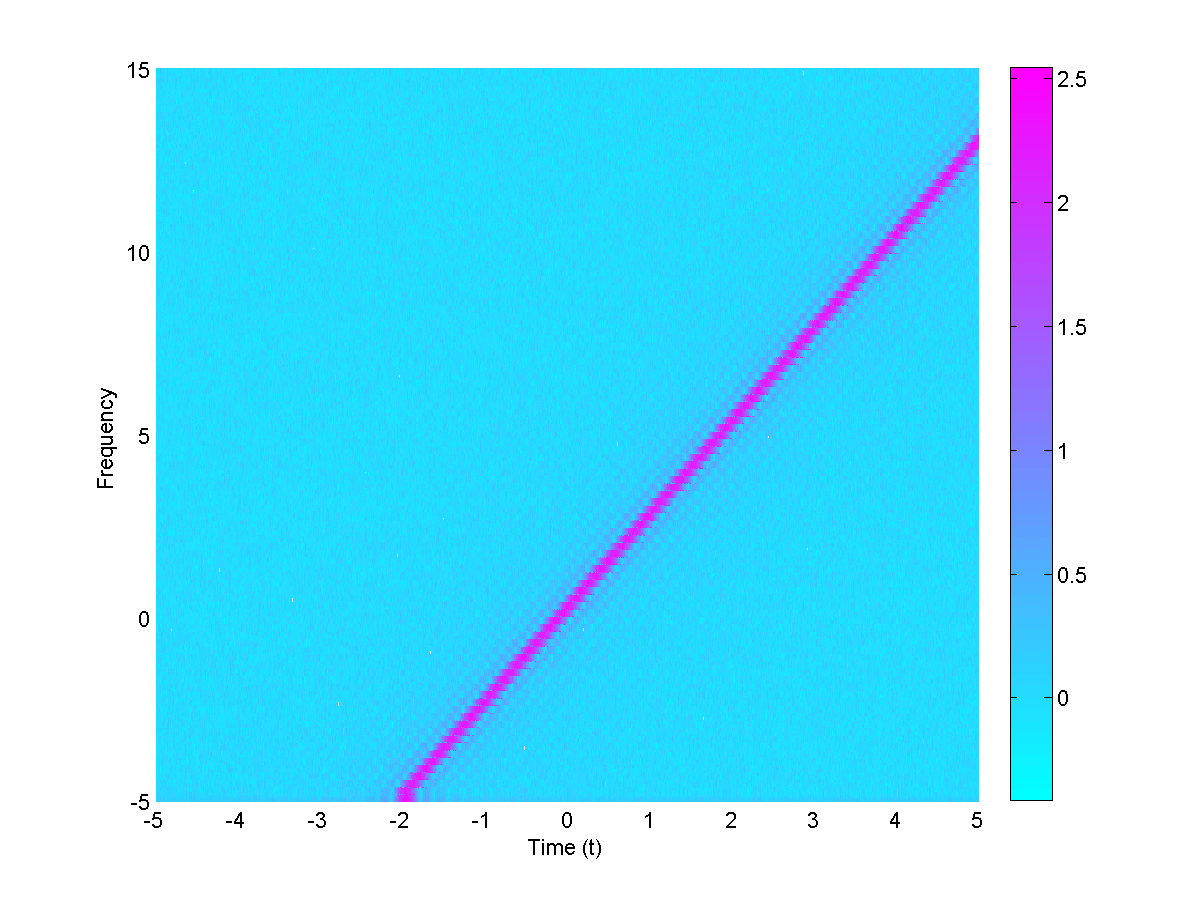}}
    \subfigure[The new NFrAF of ${\bf x}(t)$ at $\theta=\frac{\pi}{2},k=2$]{\includegraphics[width=0.4\textwidth]{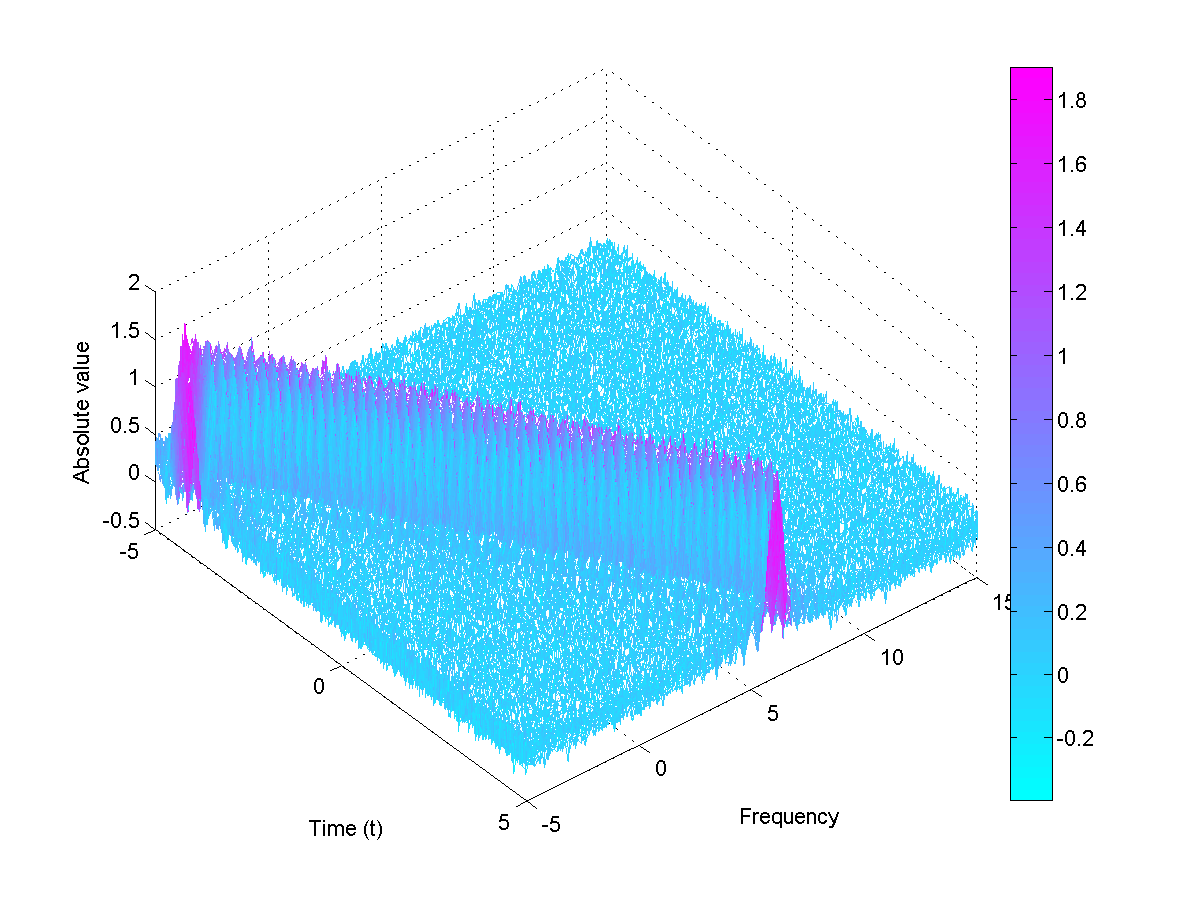}}
    \subfigure[Contour picture of new NFrAF of ${\bf x}(t)$ at $\theta=\frac{\pi}{2},k=2$]{\includegraphics[width=0.4\textwidth]{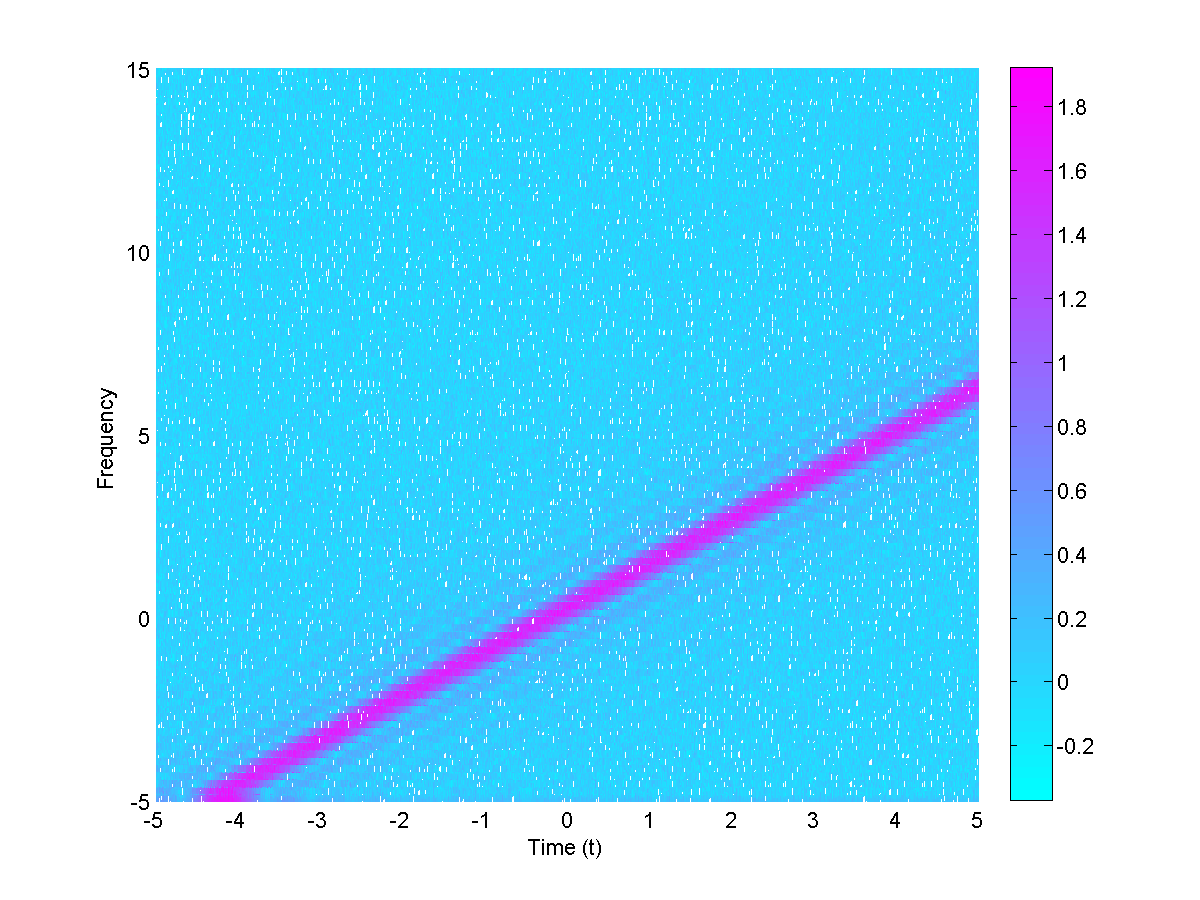}}
    \caption{Absolute value of the new NFrAF   of a mono-component signal ${\bf x}(t)=e^{i(0.2t+0.3t^2)}$ corresponding to particular choices of $\theta$ and $k$  with
$\Lambda_0=1,$  $T=10$ and SNR =10dB. }\label{fig2}
\end{framed}
\end{figure}

\begin{figure}[!htbp]
\begin{framed}
\centering
 \subfigure[The new NFrAF of ${\bf x}(t)$ at $\theta=\frac{\pi}{4},k=\frac{1}{2}$]{\includegraphics[width=0.4\textwidth]{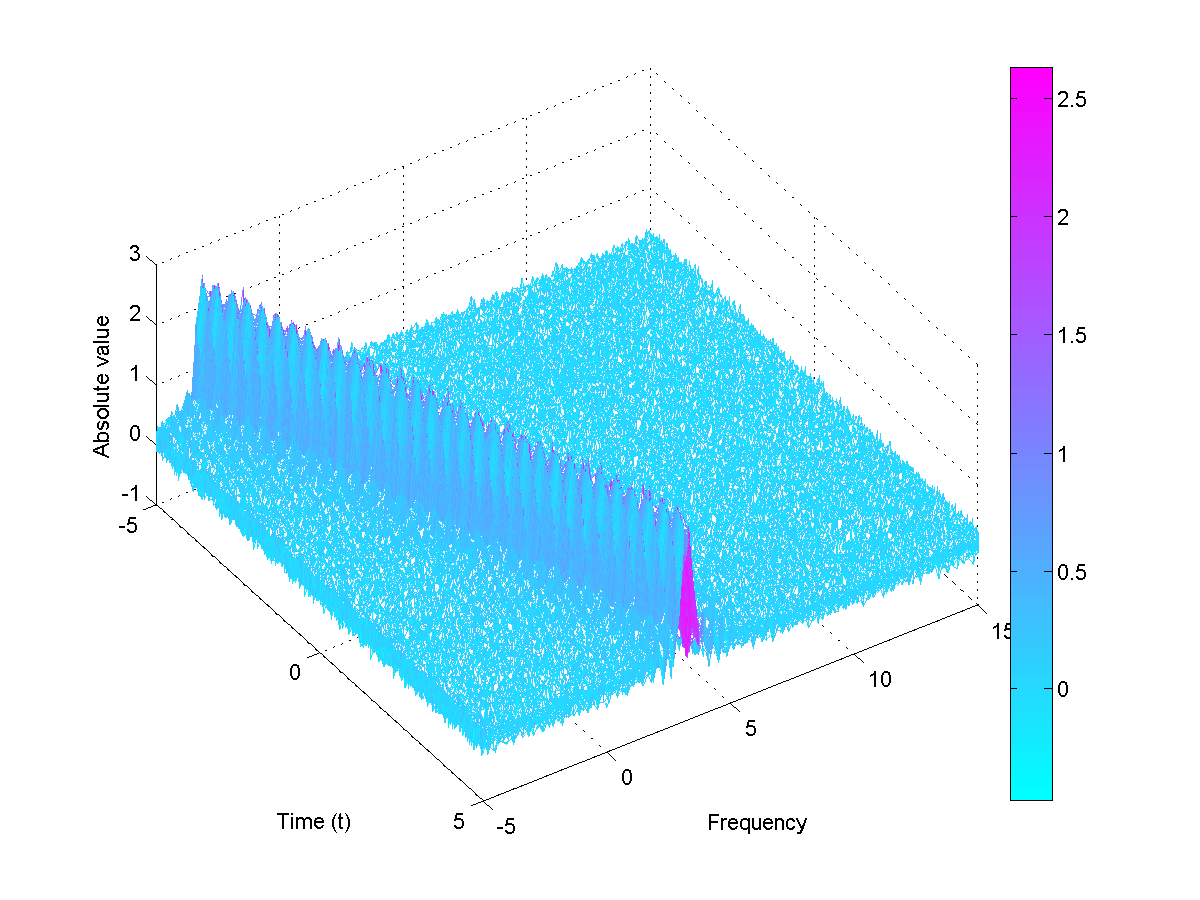}}
    \subfigure[Contour picture of new NFrAF of ${\bf x}(t)$ at $\theta=\frac{\pi}{4},k=\frac{1}{2}$]{\includegraphics[width=0.4\textwidth]{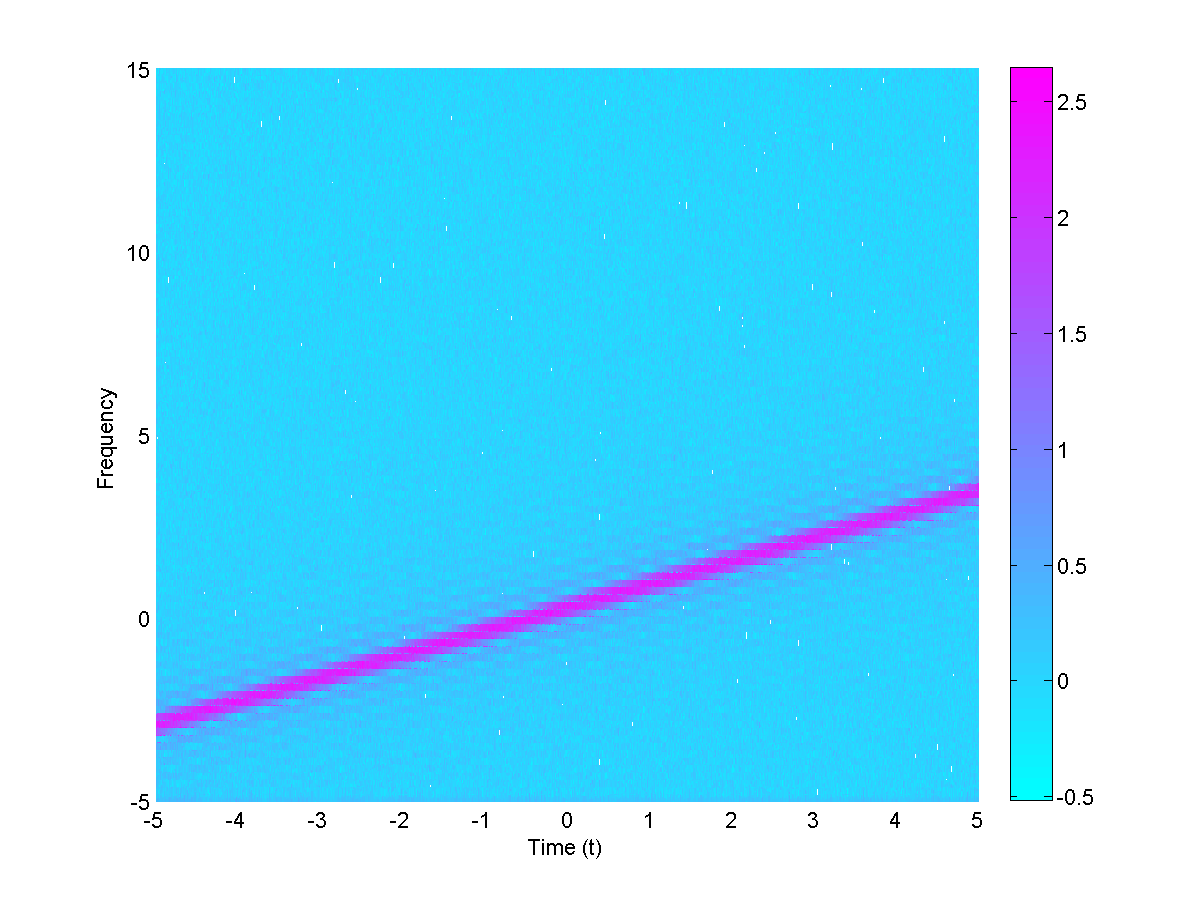}}
    \subfigure[The new NFrAF of ${\bf x}(t)$ at $\theta=\frac{\pi}{2},k=\frac{1}{2}$]{\includegraphics[width=0.4\textwidth]{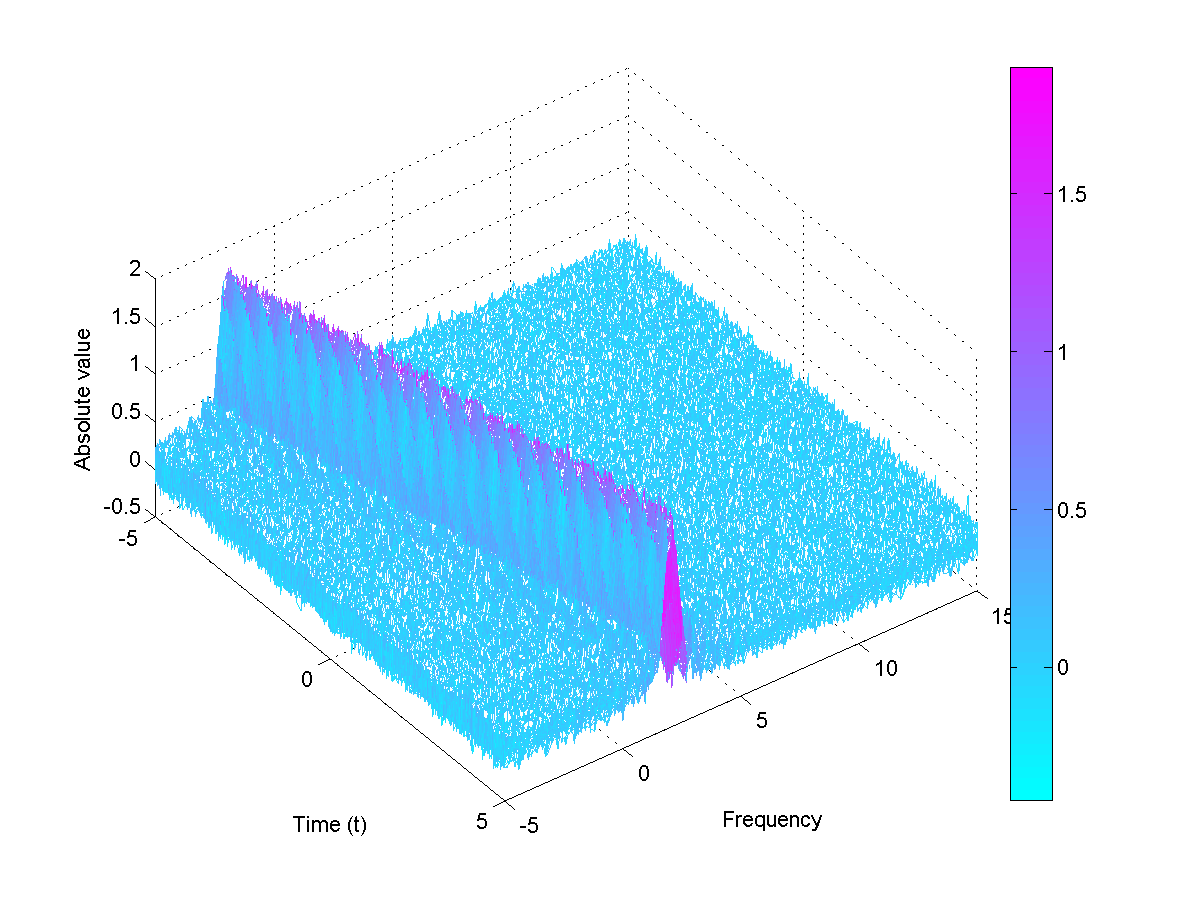}}
    \subfigure[Contour picture of new NFrAF of ${\bf x}(t)$ at $\theta=\frac{\pi}{2},k=\frac{1}{2}$]{\includegraphics[width=0.4\textwidth]{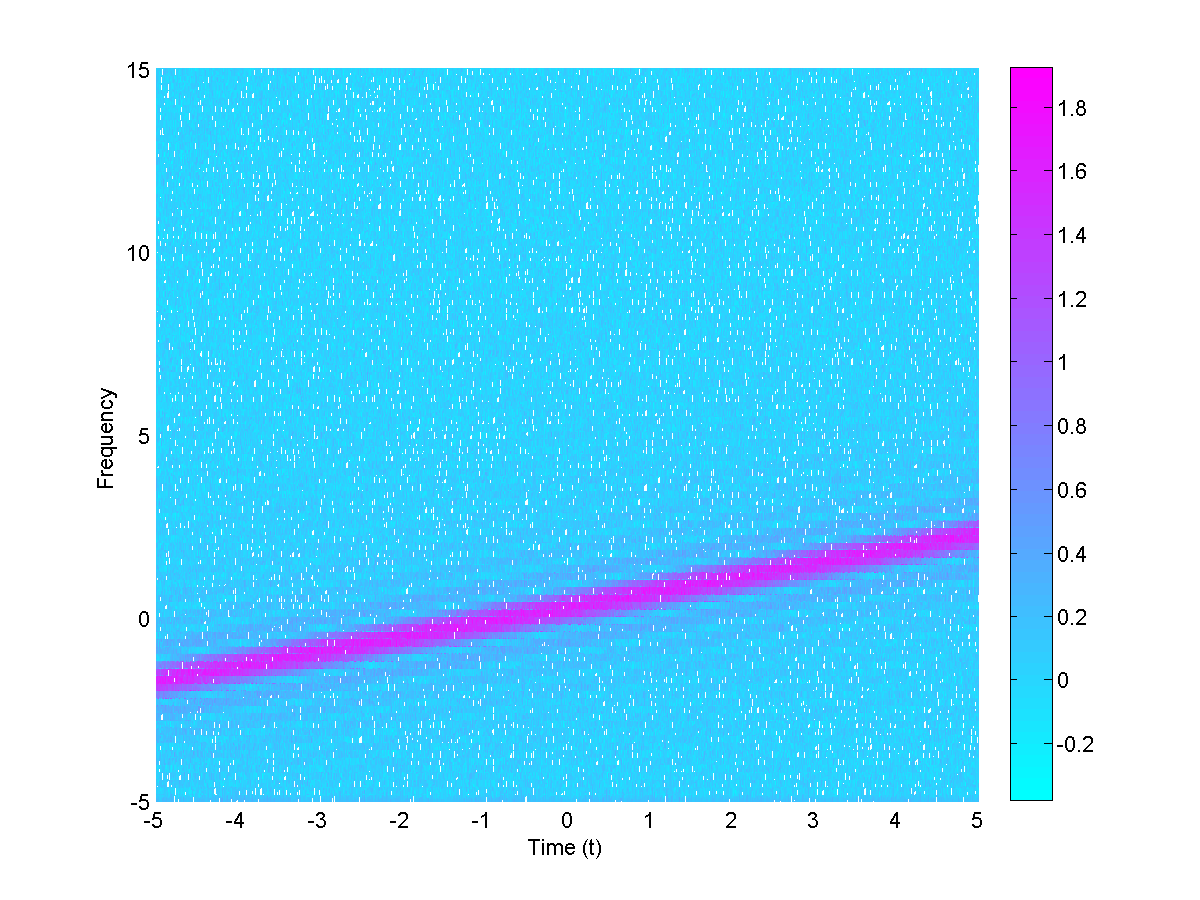}}
    \caption{Absolute value of the new NFrAF   of a mono-component signal ${\bf x}(t)=e^{i(0.2t+0.3t^2)}$ corresponding to particular choices of $\theta$ and $k$   with
$\Lambda_0=1,$  $T=10$ and SNR =10dB. }\label{fig3}
\end{framed}
\end{figure}

\item {\bf Multi-component LFM signal:} Every multi-component LFM signal ${\bf x}(t)$ can be written as a summation of $n$ single component LFM signals as
\begin{equation}\label{bi 1}
{\bf x}(t)=\sum_{j=1}^{n}{\bf x}_j(t), \quad {\bf x}_j(t)=\Lambda_je^{i(\mu_jt+\nu_jt^2)}, \quad -\frac{T}{2}\le t\le \frac{T}{2}.
\end{equation}
 Now using the non-linearity property (Table \ref{tab1}), the   NFrAF of multi-component LFM signal can be written as
 \begin{eqnarray}
\nonumber &&\mathcal  A^{\theta,k}_{\bf x}(\tau,u)=\sum_{j=1}^{n}\mathcal  A^{\theta,k}_{{\bf x}_j}(\tau,u)+\sum_{j_1\ne j_2=1}^{n}\mathcal  A^{\theta,k}_{{\bf x}_{j_1},{\bf x}_{j_2}}(\tau,u)
 \end{eqnarray}
 The first sum in last equation stands for the auto-terms of one-component signals, whereas the second represent the cross terms that are given by

\begin{eqnarray}
\nonumber &&\mathcal  A^{\theta,k}_{{\bf x}_{j_1},{\bf x}_{j_2}}(\tau,u)\\
\nonumber&&=|\Omega_\theta|^2\int_{-\frac{T}{2}}^{\frac{T}{2}}\left({\bf x}_{j_1}\underset{k\frac{\tau}{2}}{\otimes}{\bf x}_{j_2}^*\right)(t)e^{it(k\tau\cot\theta-u\csc\theta)}dt \\
\nonumber&&=\Lambda_{j_1}\Lambda^*_{j_2}|\Omega_\theta|^2\int_{-\frac{T}{2}}^{\frac{T}{2}}e^{i\left[\mu_{j_1}\left(t+k\frac{\tau}{2}\right)+\nu_{j_1}\left(t+k\frac{\tau}{2}\right)^2\right]}e^{-i\left[\mu_{j_2}\left(t-k\frac{\tau}{2}\right)+\nu_{j_2}\left(t-k\frac{\tau}{2}\right)^2\right]}e^{it(k\tau\cot\theta-u\csc\theta)}dt \\
\nonumber&&=\Lambda_{j_1}\Lambda^*_{j_2}|\Omega_\theta|^2e^{i\left(\frac{\nu_{j_1}-\nu_{j_2}}{4}k^2\tau^2+\frac{(\mu_{j_1}+\mu_{j_2})}{2}k\tau\right)}\int_{-\frac{T}{2}}^{\frac{T}{2}}e^{i[(\nu_{j_1}-\nu_{j_2})t^2]}e^{i\left[k(\nu_{j_1}+\nu_{j_2}+\cot\theta)\tau-u\csc\theta+(\mu_{j_1}-\mu_{j_2})\right]t}dt. \\
\end{eqnarray}

Hence the NFrAF of a multi-component signal ${\bf x}(t)=\sum_{j=1}^{n}{\bf x}_j(t)$ is given by
\begin{eqnarray*}
 \mathcal  A^{\theta,k}_{\bf x}(\tau,u)&=&\sum_{j=1}^{n}\mathcal  A^{\theta,k}_{{\bf x}_j}(\tau,u)+\sum_{j_1\ne j_2=1}^{n}\mathcal  A^{\theta}_{{\bf x}_{j_1},{\bf x}_{j_2}}(\tau,u)\\\\
 &=&\sum_{j=1}^{n}{|\Omega_\theta|^2|\Lambda_j|^2T}e^{ik\mu_j\tau} sinc\left\{\frac{T}{2}\left[k(2\nu_j+\cot\theta )\tau-u\csc\theta\right]\right\}\\\\
&& +\sum_{j_1\ne j_2=1}^{n}\Lambda_{j_1}\Lambda^*_{j_2}|\Omega_\theta|^2e^{i\left(\frac{\nu_{j_1}-\nu_{j_2}}{4}k^2\tau^2+\frac{(\mu_{j_1}+\mu_{j_2})}{2}k\tau\right)}\\
\qquad\qquad\qquad&&\times\int_{-\frac{T}{2}}^{\frac{T}{2}}e^{i[(\nu_{j_1}-\nu_{j_2})t^2]}e^{i\left[k(\nu_{j_1}+\nu_{j_2}+\cot\theta)\tau-u\csc\theta+(\mu_{j_1}-\mu_{j_2})\right]t}dt.\\
\end{eqnarray*}



\begin{figure}[!htbp]
\centering
\begin{framed}
\begin{minipage}[b]{0.48\linewidth}
	\centering
	\includegraphics[width=\linewidth]{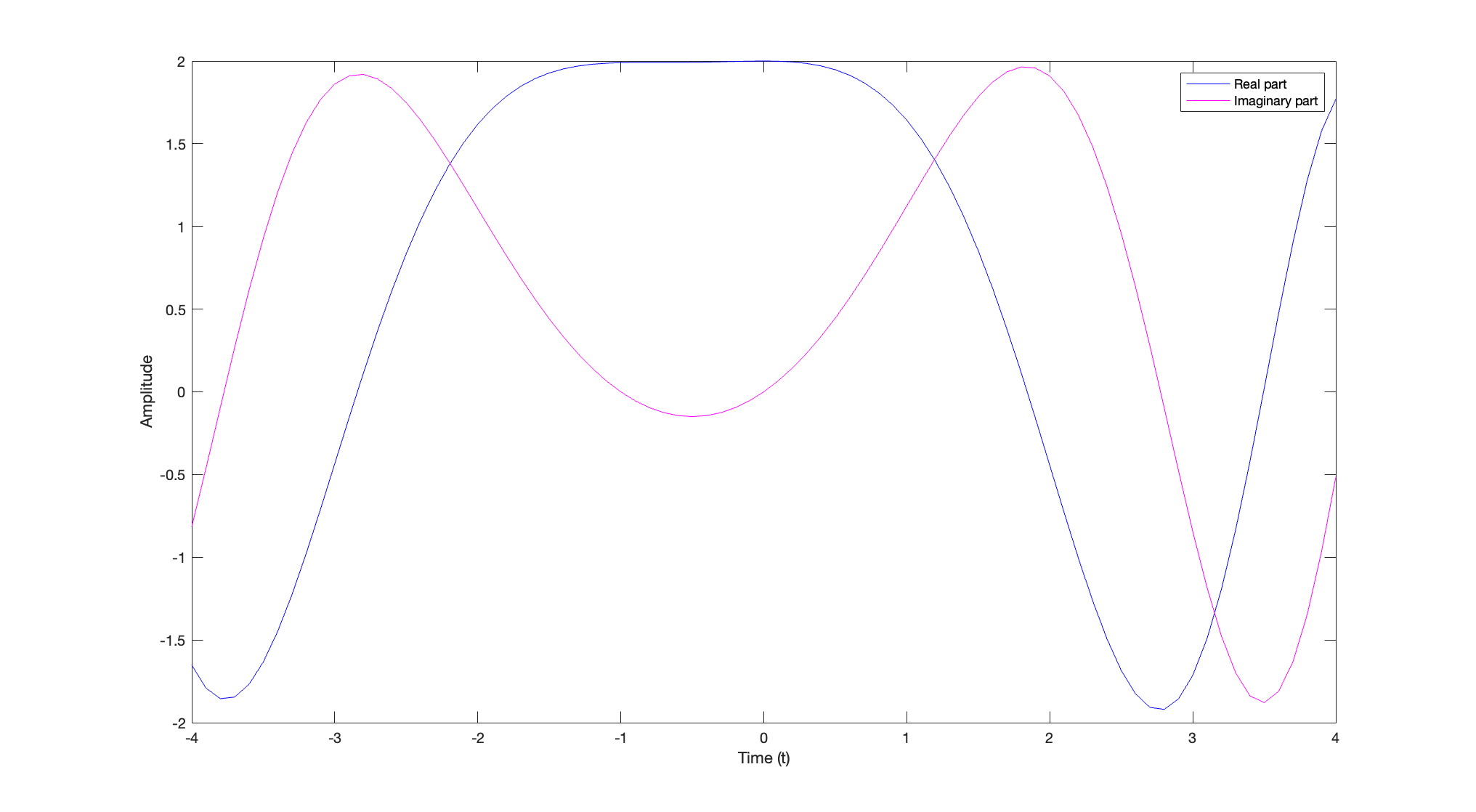}
	{\rm (a)  }
\end{minipage}
\hfill
\begin{minipage}[b]{0.48\linewidth}
	\centering
	\includegraphics[width=\linewidth]{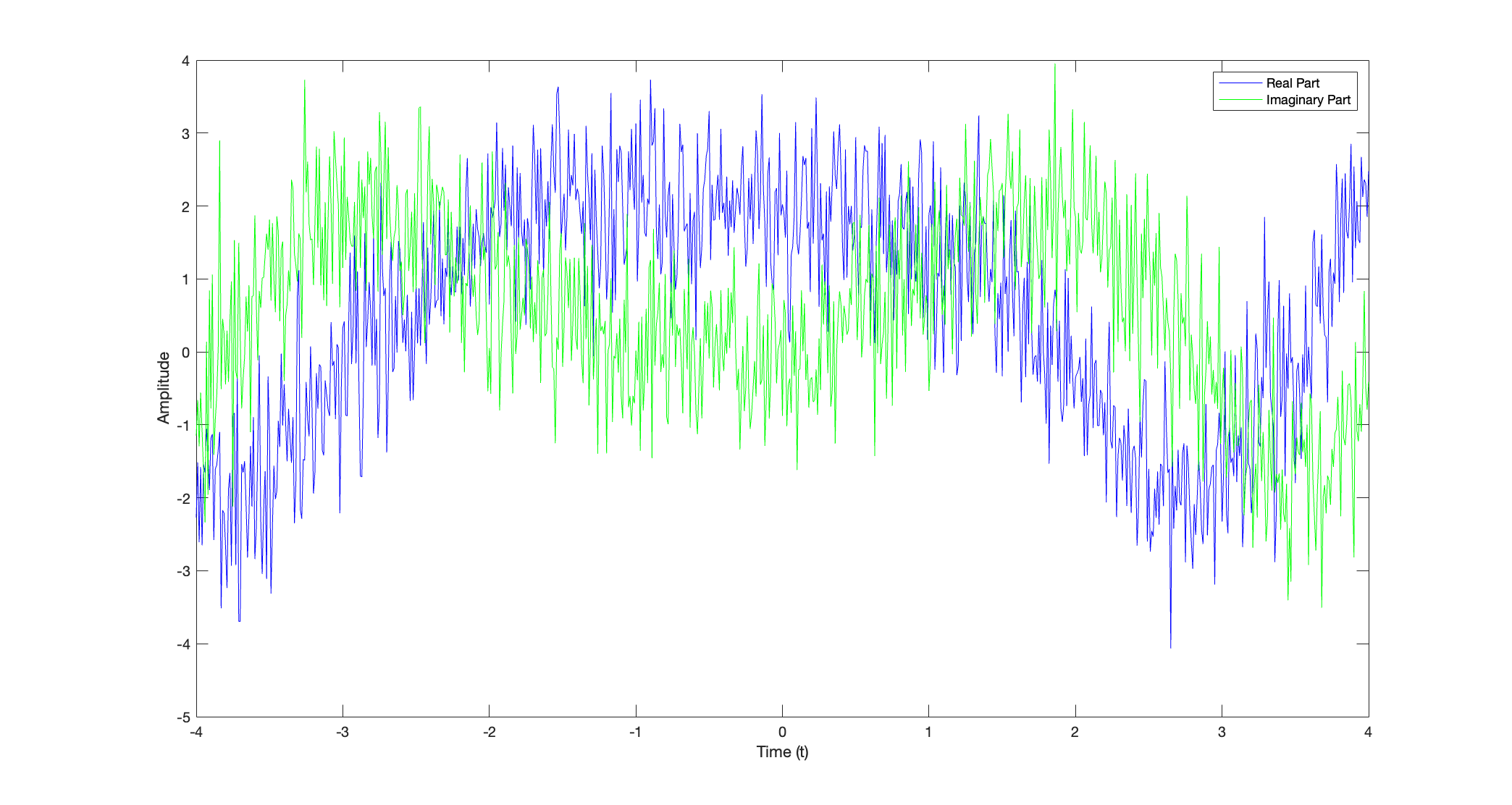}
	{\rm (b)}
\end{minipage}

\caption{\small  (a)Real and Imaginary parts of bi-component signal ${\bf x}(t)=e^{i(0.2t+0.3t^2)}+e^{i(0.4t+0.3t^2)}$ (b) Real and imaginary parts of the same signal corrupted by additive noise  at $5$dB SNR }
\label{S3F4}
\end{framed}
\end{figure}

 The multi-component LFM signal can still be recognized even though the existence of cross terms lowers detection performance because the auto-terms $\mathcal  A^{\theta,k}_{{\bf x}_j}$ can produce impulses that the cross terms $\mathcal  A^{\theta,k}_{{\bf x}_{j_1},{\bf x}_{j_2}},$ cannot. This implies that the innovative NFrAF can also be usefully applied in multi-component LFM signal detection.For the special case $\nu_1=\nu_2=....=\nu_n=\nu,$ we have
 \begin{eqnarray}\label{eqn multiwd}
\nonumber \mathcal  A^{\theta,k}_{\bf x}(\tau,u)\\
\nonumber&=&\sum_{j=1}^{n}\mathcal  A^{\theta}_{{\bf x}_j}(\tau,u)+\sum_{j_1\ne j_2=1}^{n}\mathcal  A^{\theta,k}_{{\bf x}_{j_1},{\bf x}_{j_2}}(\tau,u)\\
 \nonumber &=&\sum_{j=1}^{n}{|\Omega_\theta|^2|\Lambda_j|^2T}e^{ik\mu_j\tau} sinc\left\{\frac{T}{2}\left[k(2\nu+\cot\theta )\tau-u\csc\theta\right]\right\}\\
\nonumber && +\sum_{j_1\ne j_2=1}^{n}\Lambda_{j_1}\Lambda^*_{j_2}|\Omega_\theta|^2e^{i\left(\frac{\mu_{j_1}+\mu_{j_2}}{2}\right)k\tau}\\
\nonumber &&\qquad\qquad\qquad\qquad\times sinc\left\{\frac{T}{2}\left[k(2\nu +\cot\theta)\tau-u\csc\theta+(\mu_{j_1}-\mu_{j_2})\right]   \right\}. \\
\end{eqnarray}

\begin{figure}[!htbp]
\begin{framed}
\centering
     \subfigure[The new NFrAF of signal (\ref{bisig}) at $\theta=\frac{\pi}{4}$ and $k=1$]{\includegraphics[width=0.4\textwidth]{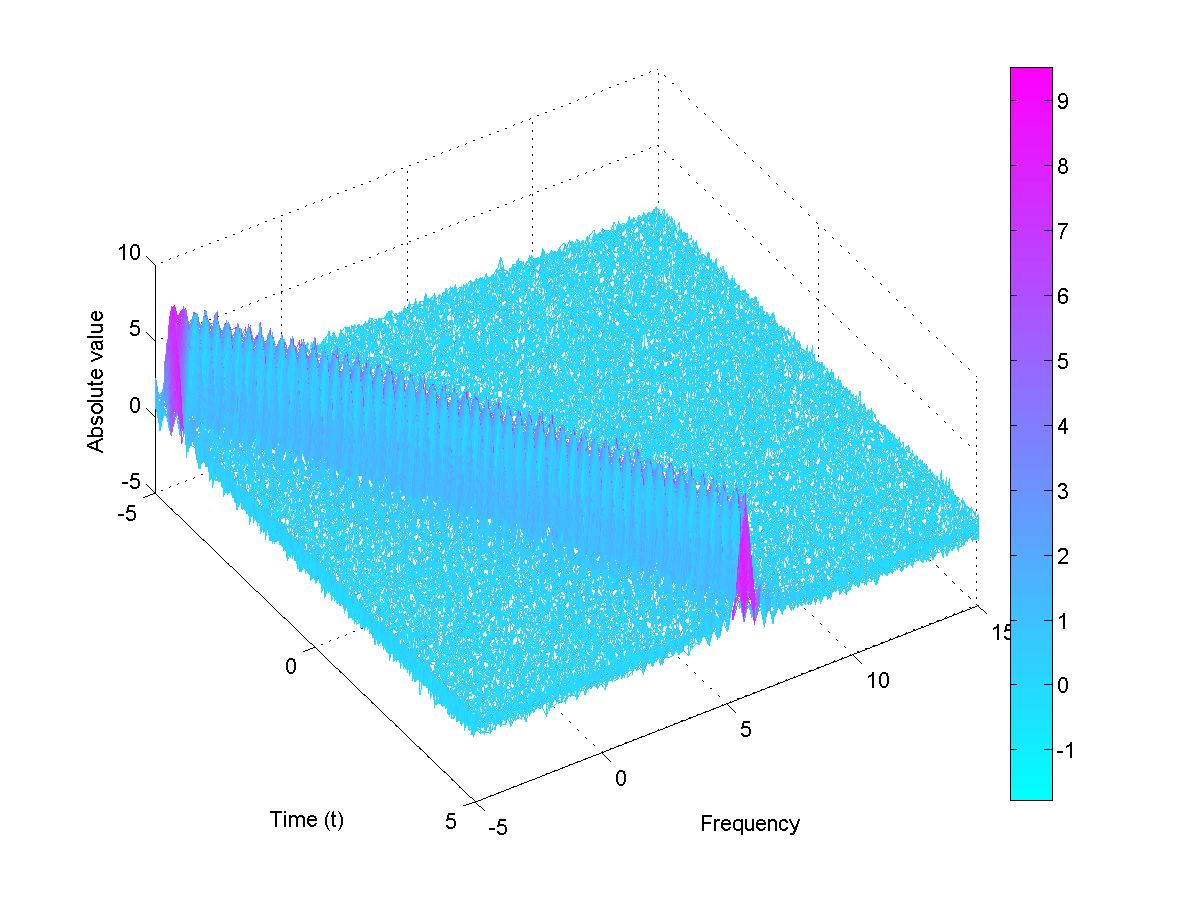}}
    \subfigure[Contour plot of signal (\ref{bisig}) at $\theta=\frac{\pi}{4}$ and $k=1$]{\includegraphics[width=0.4\textwidth]{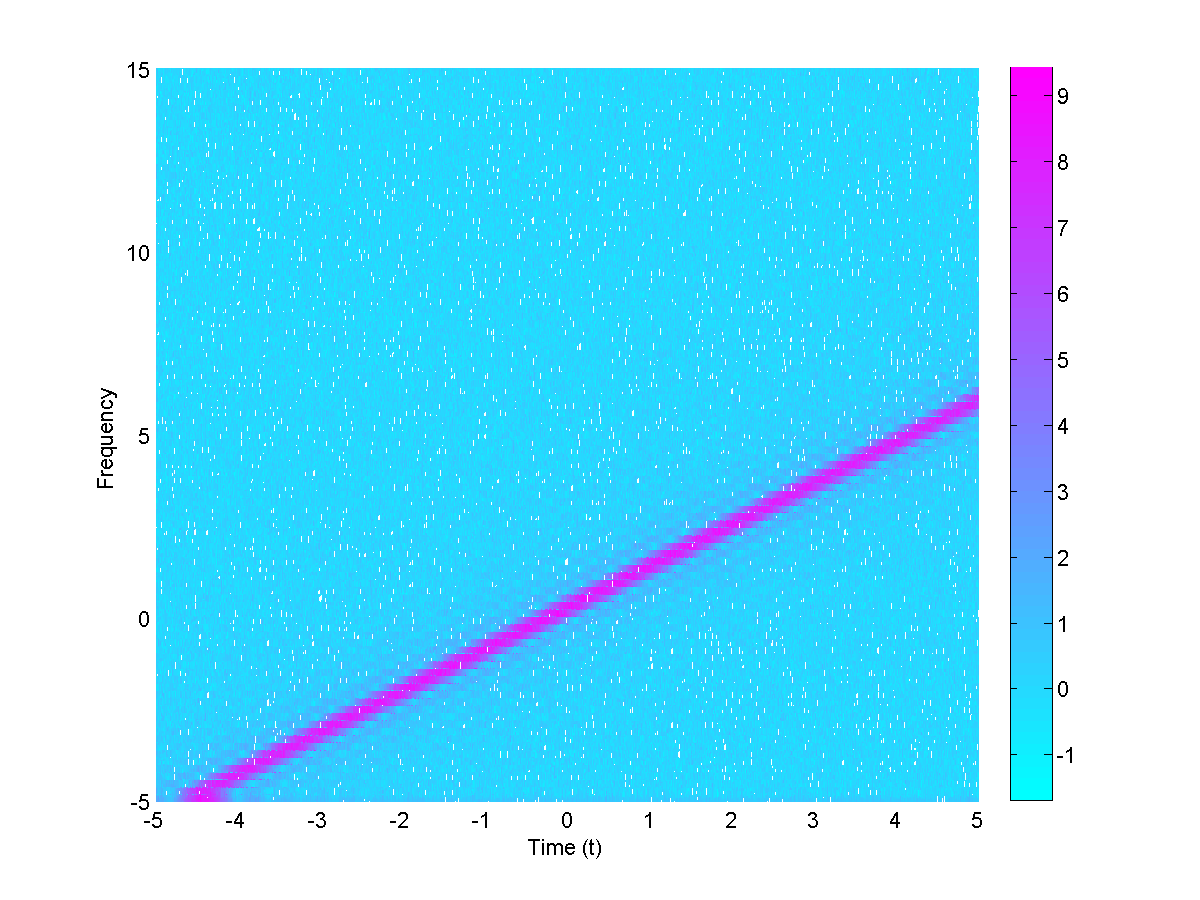}}
      \subfigure[The new NFrAF of signal (\ref{bisig}) at $\theta=\frac{\pi}{2}$ and $k=1$]{\includegraphics[width=0.4\textwidth]{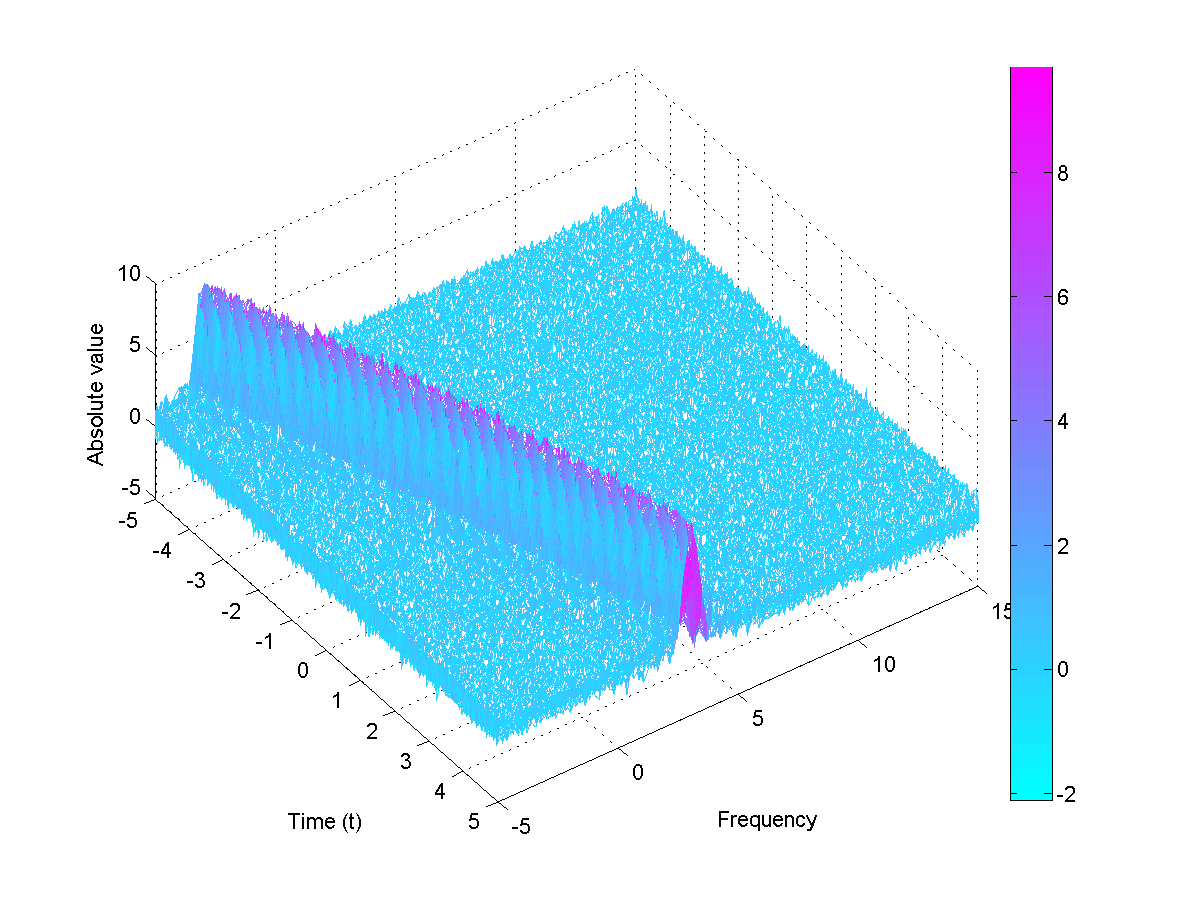}}
    \subfigure[Contour plot of signal (\ref{bisig}) at $\theta=\frac{\pi}{2}$ and $k=1$]{\includegraphics[width=0.4\textwidth]{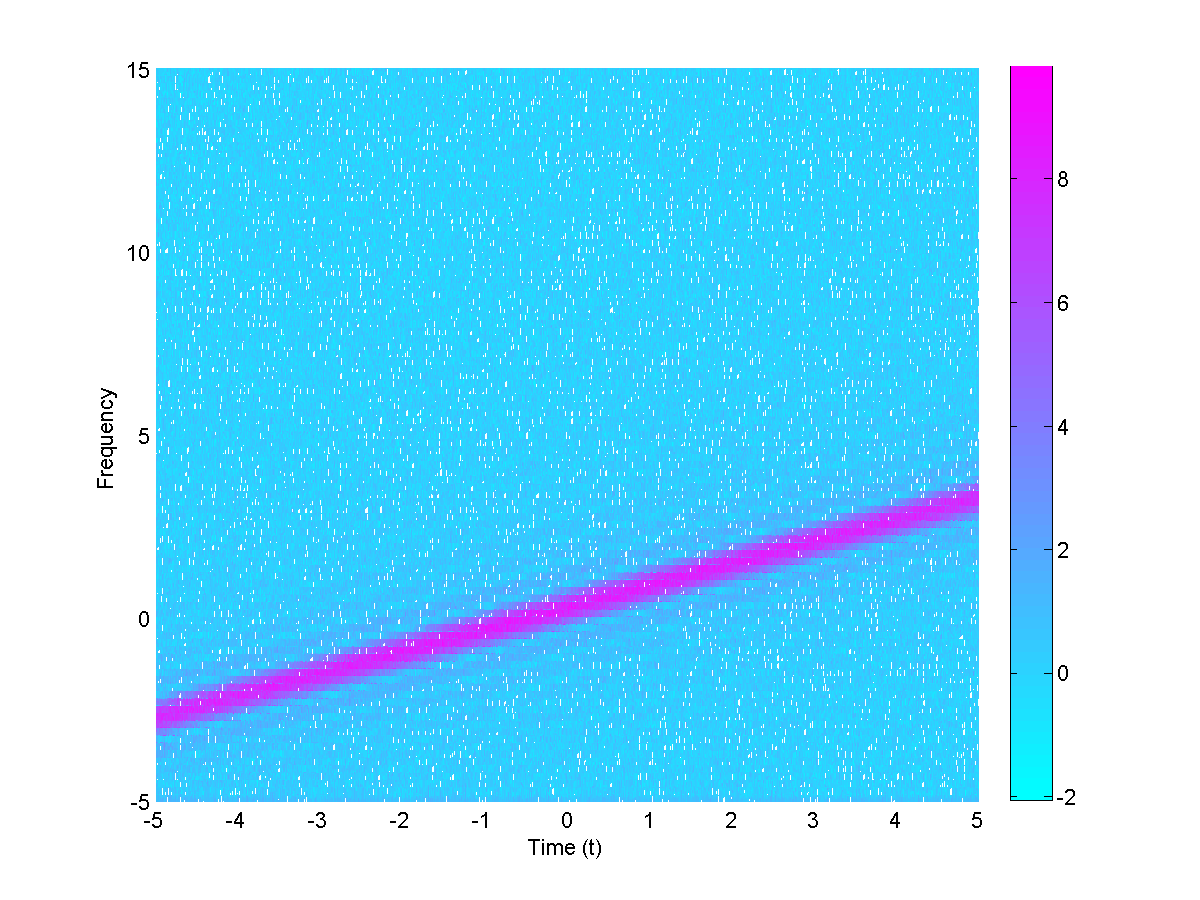}}
     \caption{Absolute value of the new NFrAF bi-component LFM signal (\ref{bisig})
with SNR =10dB db corresponding to particular choices of $\theta$ and $k$.  }\label{fig4}
\end{framed}
\end{figure}

\begin{figure}[!htbp]
\begin{framed}
\centering
\subfigure[The new NFrAF of signal (\ref{bisig}) at $\theta=\frac{\pi}{4}$ and $k=2$]{\includegraphics[width=0.4\textwidth]{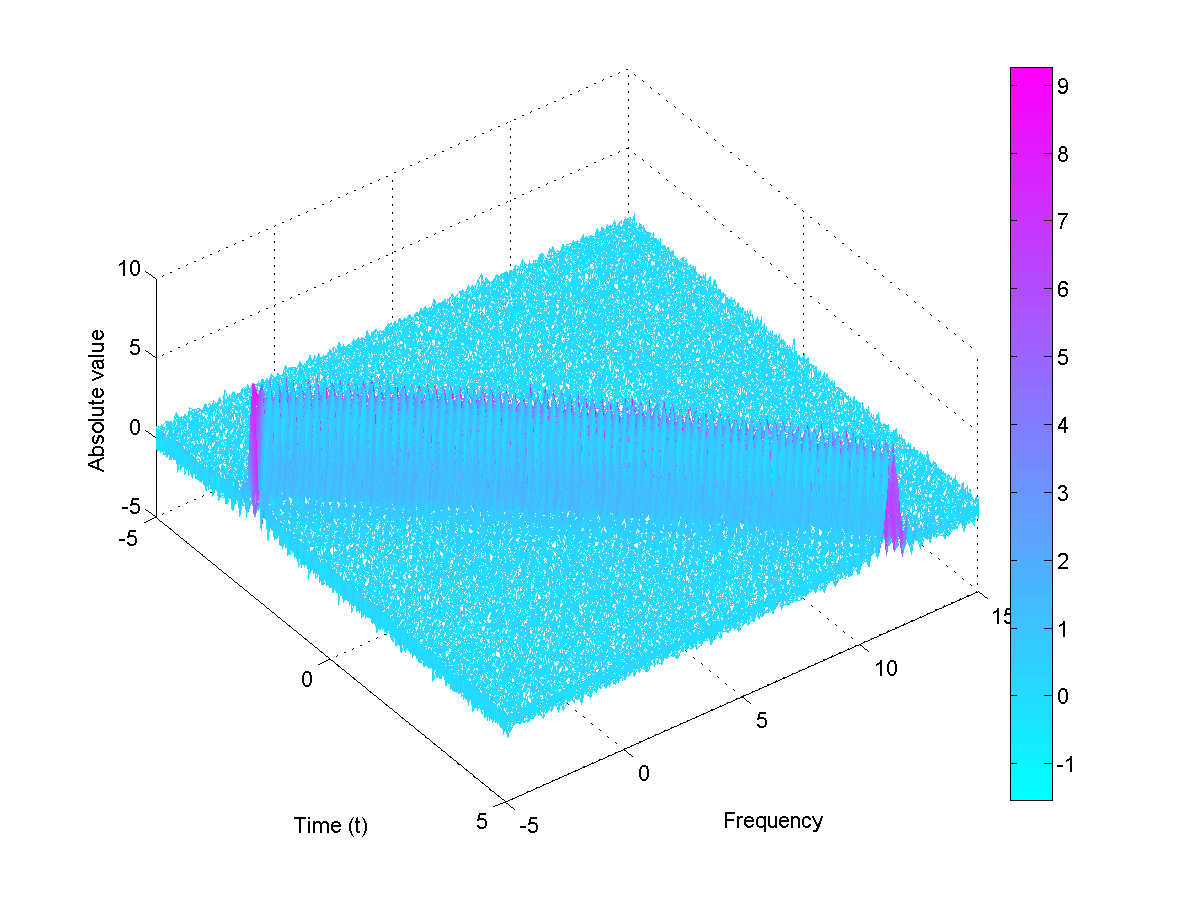}}
    \subfigure[Contour plot of signal (\ref{bisig}) at $\theta=\frac{\pi}{4}$ and $k=2$]{\includegraphics[width=0.4\textwidth]{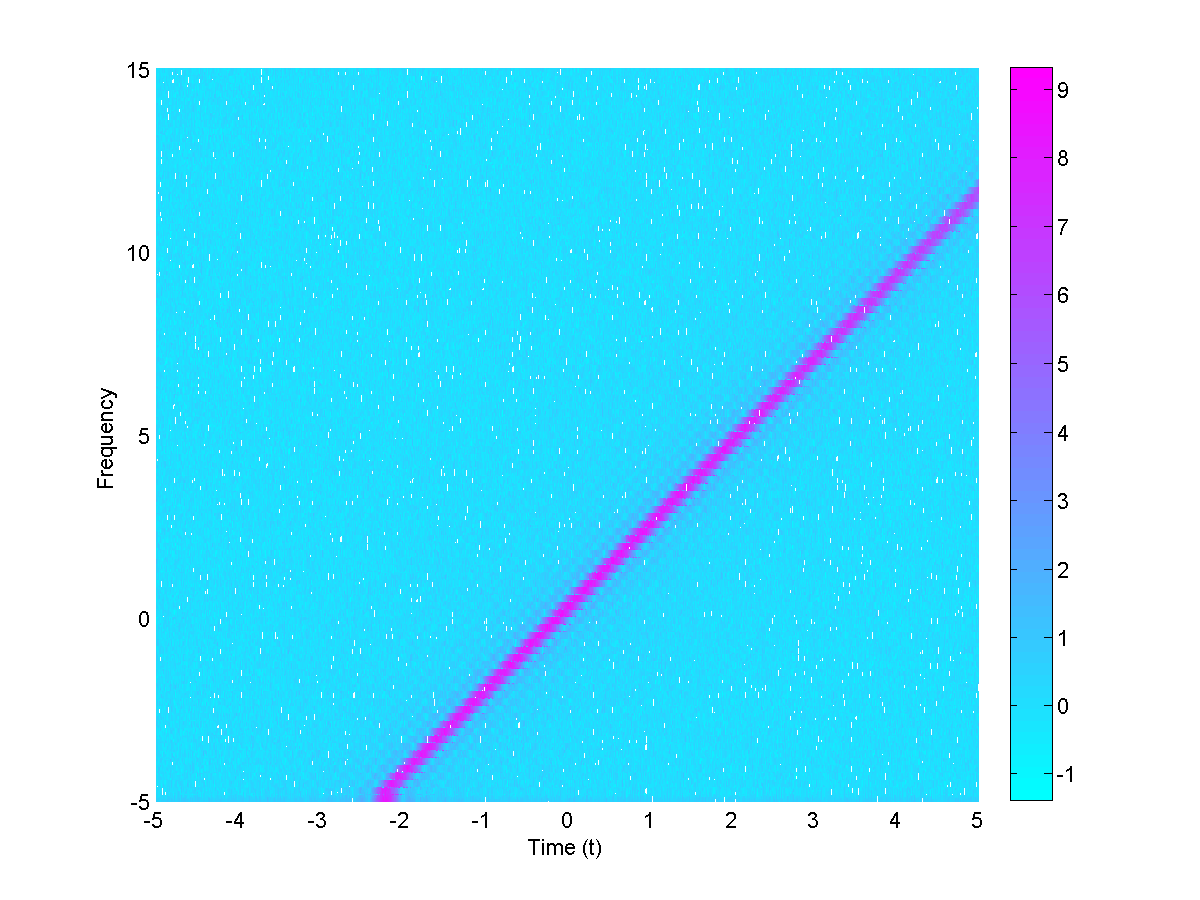}}
    \subfigure[The new NFrAF of signal (\ref{bisig}) at $\theta=\frac{\pi}{2}$ and $k=2$]{\includegraphics[width=0.4\textwidth]{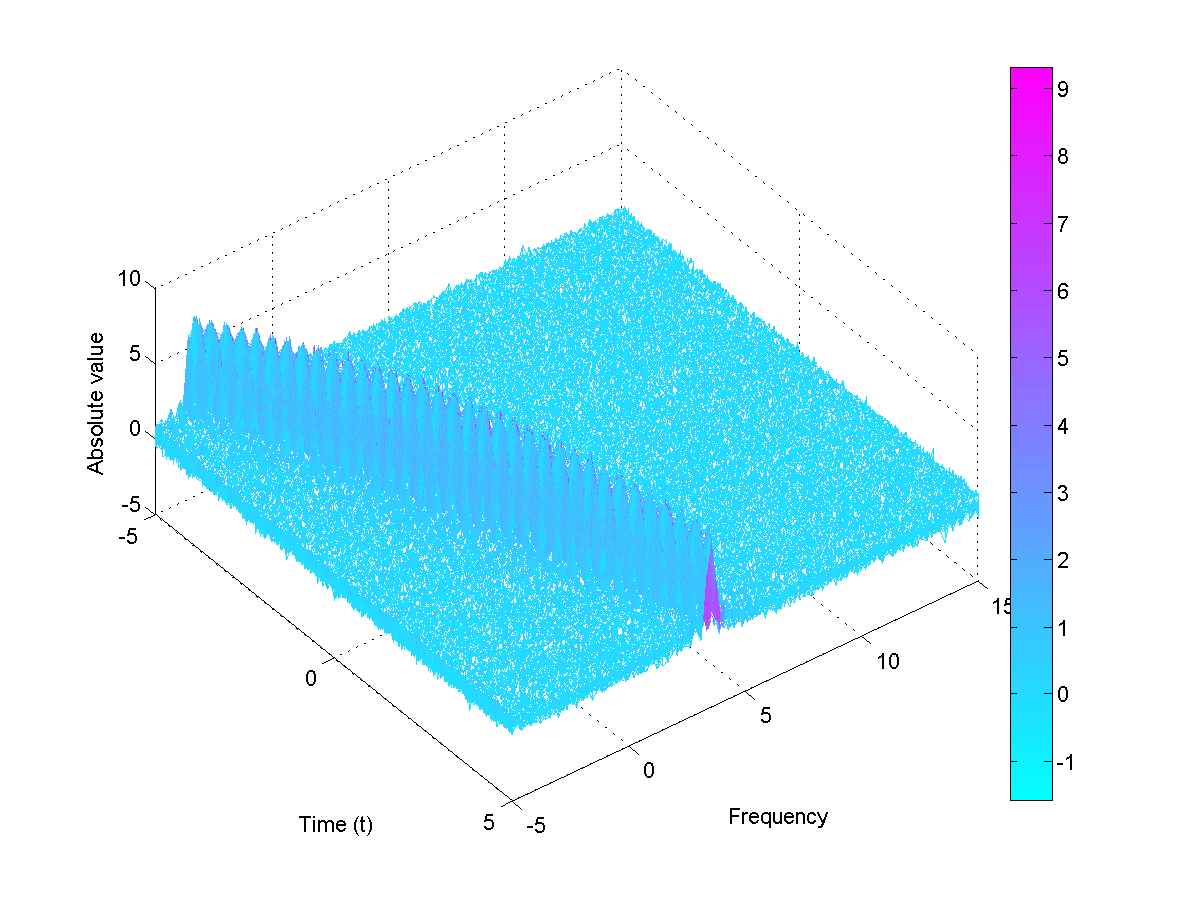}}
    \subfigure[Contour plot of signal (\ref{bisig}) at $\theta=\frac{\pi}{2}$ and $k=2$]{\includegraphics[width=0.4\textwidth]{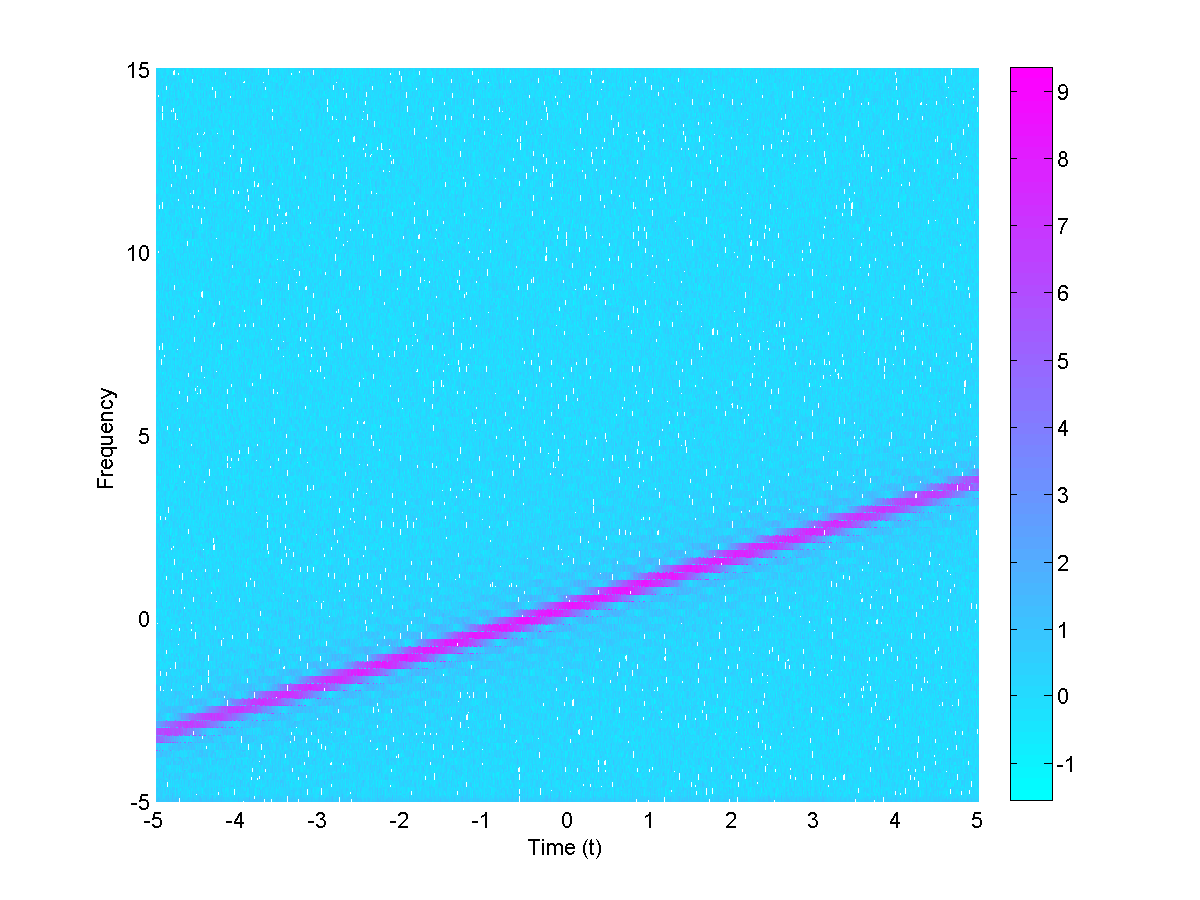}}
    \caption{Absolute value of the new NFrAF bi-component LFM signal (\ref{bisig})
with SNR =10dB db corresponding to particular choices of $\theta$ and $k$. }\label{fig5}
\end{framed}
\end{figure}

\begin{figure}[t]
\begin{framed}
\centering
 \subfigure[The new NFrAF of signal (\ref{bisig}) at $\theta=\frac{\pi}{4}$ and $k=\frac{1}{2}$]{\includegraphics[width=0.4\textwidth]{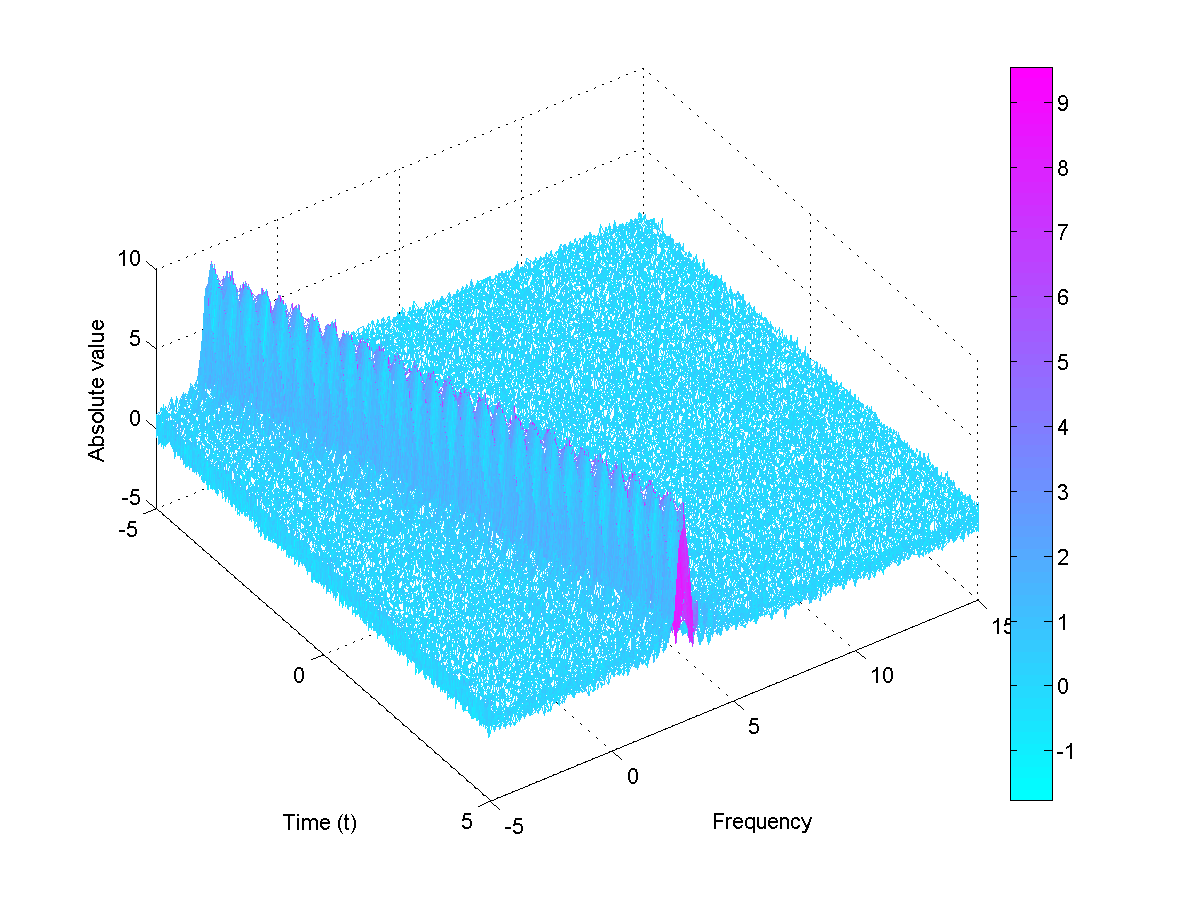}}
    \subfigure[Contour plot of signal (\ref{bisig}) at $\theta=\frac{\pi}{4}$ and $k=\frac{1}{2}$]{\includegraphics[width=0.4\textwidth]{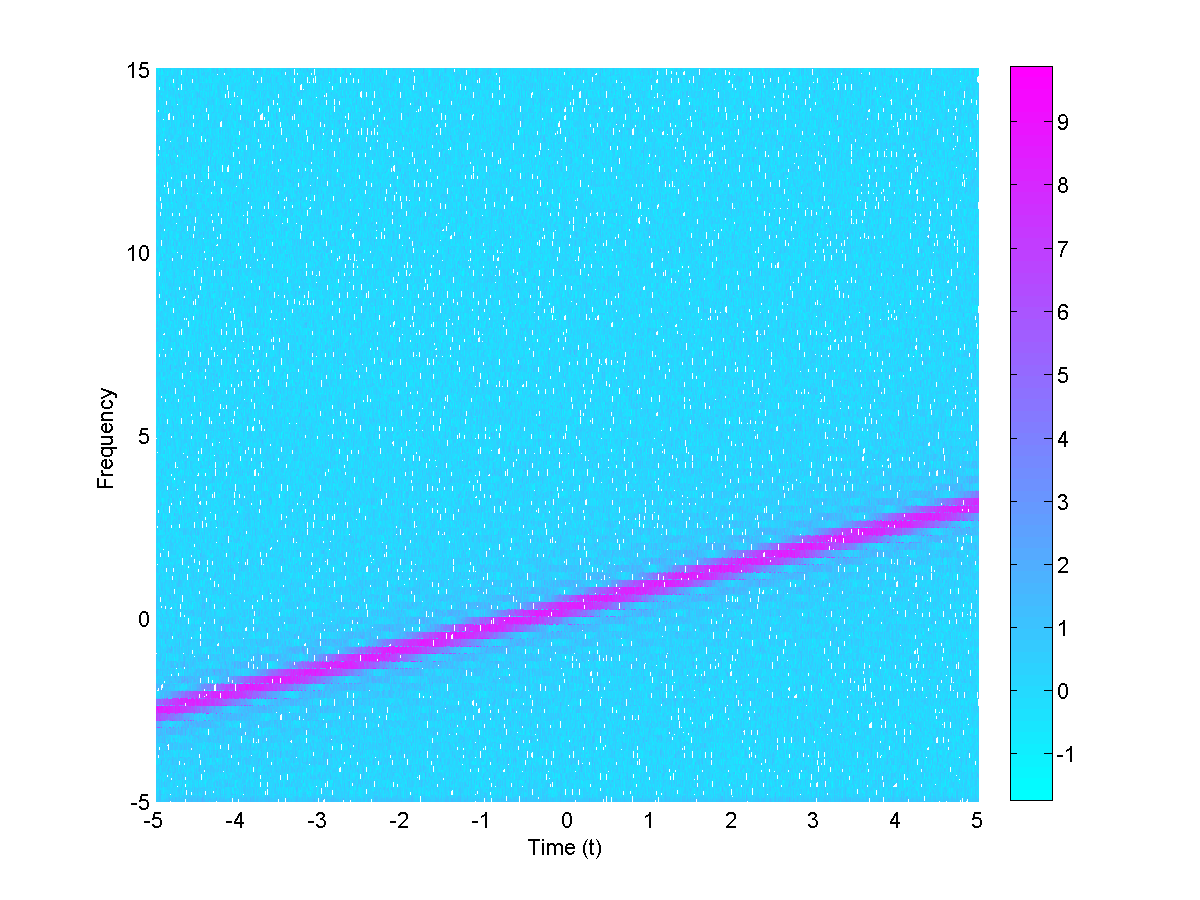}}
    \subfigure[The new NFrAF of signal (\ref{bisig}) at $\theta=\frac{\pi}{2}$ and $k=\frac{1}{2}$]{\includegraphics[width=0.4\textwidth]{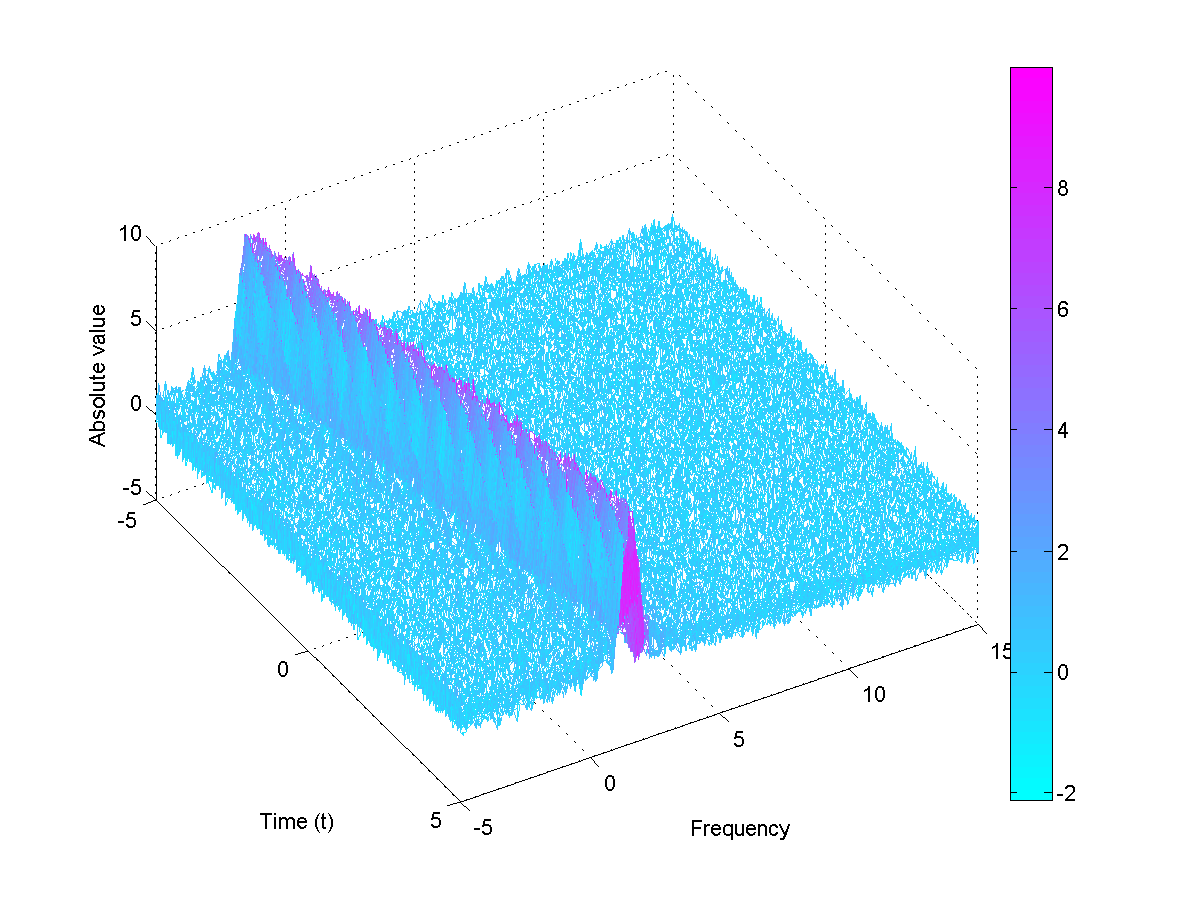}}
    \subfigure[Contour plot of signal (\ref{bisig}) at $\theta=\frac{\pi}{2}$ and $k=\frac{1}{2}$]{\includegraphics[width=0.4\textwidth]{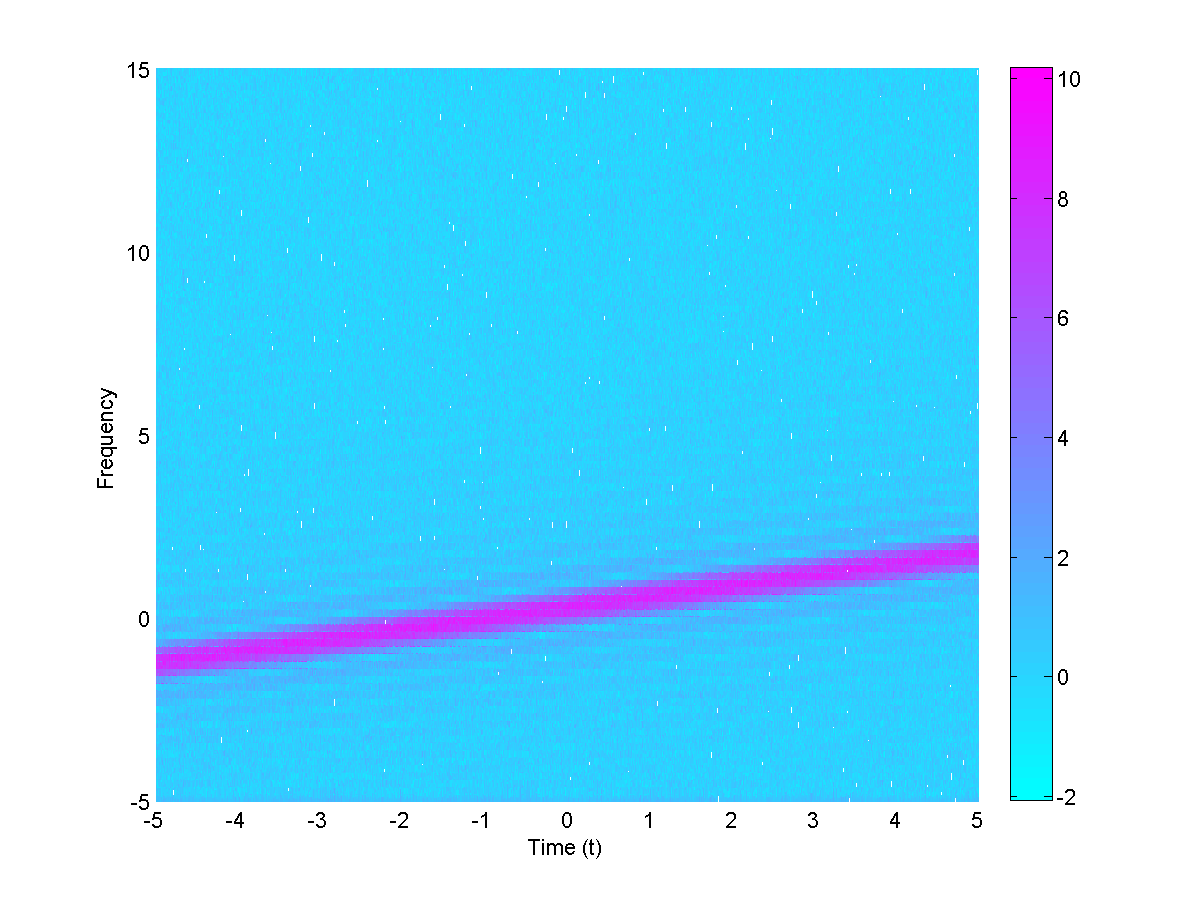}}
    \caption{Absolute value of the new NFrAF bi-component LFM signal (\ref{bisig})
with SNR =10dB db corresponding to particular choices of $\theta$ and $k$. }\label{fig6}
\end{framed}
\end{figure}

{\bf Numerical example.} Let us take a bi-component LFM signal  given by \begin{equation}\label{bisig}
 {\bf x}(t)=e^{i(0.2t+0.3t^2)}+e^{i(0.4t+0.3t^2)},\quad T=10
  \end{equation}
 then by the virtue of (\ref{eqn multiwd}) and choosing $\Lambda_j=1,\forall j,$ and $\theta=\frac{\pi}{4}$ , the  NFrAF of bi-component signal ${\bf x}(t)$ at scaling parameters $k=1,2,\frac{1}{2}$ are  given by
\begin{eqnarray}
\nonumber\mathcal  A^{\frac{\pi}{4},1}_{\bf x}(\tau,u)&=&\frac{5\sqrt{2}}{\pi}\left[e^{0.2i\tau}sinc(8\tau-5\sqrt{2}u)+e^{0.4i\tau}sinc(8\tau-5\sqrt{2}u)\right.\\
\label{bifrwdpi4k1}&&\qquad\left.+e^{0.3i\tau}sinc(8\tau-5\sqrt{2}u-1)+e^{0.3i\tau}sinc(8\tau-5\sqrt{2}u+1)\right]
\end{eqnarray}
\begin{eqnarray}
\nonumber\mathcal  A^{\frac{\pi}{4},2}_{\bf x}(\tau,u)&=&\frac{5\sqrt{2}}{\pi}\left[e^{0.4i\tau}sinc(16\tau-5\sqrt{2}u)+e^{0.8i\tau}sinc(16\tau-5\sqrt{2}u)\right.\\
\label{bifrwdpi4k2}&&\qquad\left.+e^{0.6i\tau}sinc(16\tau-5\sqrt{2}u-1)+e^{0.6i\tau}sinc(16\tau-5\sqrt{2}u+1)\right]
\end{eqnarray}
and
\begin{eqnarray}
\nonumber\mathcal  A^{\frac{\pi}{4},\frac{1}{2}}_{\bf x}(\tau,u)&=&\frac{5\sqrt{2}}{\pi}\left[e^{0.1i\tau}sinc(4\tau-5\sqrt{2}u)+e^{0.2i\tau}sinc(4\tau-5\sqrt{2}u)\right.\\
\label{bifrwdpi4k1/2}&&\qquad\left.+e^{0.15i\tau}sinc(4\tau-5\sqrt{2}u-1)+e^{0.15i\tau}sinc(4\tau-5\sqrt{2}u+1)\right]
\end{eqnarray}
respectively.\\
Similarly, under the same choices of $\Lambda_j=1,\forall j$  as above but taking $\theta=\frac{\pi}{2}$, the  NFrAF of bi-component signal ${\bf x}(t)$ at $k=1,2,\frac{1}{2}$ are obtained by
\begin{eqnarray}
\nonumber\mathcal  A^{\frac{\pi}{2},1}_{\bf x}(\tau,u)&=&\frac{5}{\pi}\left[e^{0.2i\tau}sinc(3\tau-5u)+e^{0.4i\tau}sinc(3\tau-5u)\right.\\
\label{bifrwdpi2k1}&&\qquad\left.+e^{0.3i\tau}sinc(3\tau-5u-1)+e^{0.3i\tau}sinc(3\tau-5u+1)\right],
\end{eqnarray}
\begin{eqnarray}
\nonumber\mathcal  A^{\frac{\pi}{2},1}_{\bf x}(\tau,u)&=&\frac{5}{\pi}\left[e^{0.4i\tau}sinc(6\tau-5u)+e^{0.8i\tau}sinc(6\tau-5u)\right.\\
\label{bifrwdpi2k2}&&\qquad\left.+e^{0.6i\tau}sinc(6\tau-5u-1)+e^{0.6i\tau}sinc(6\tau-5u+1)\right],
\end{eqnarray}
and
\begin{eqnarray}
\nonumber\mathcal  A^{\frac{\pi}{2},1}_{\bf x}(\tau,u)&=&\frac{5}{\pi}\left[e^{0.1i\tau}sinc(1.5\tau-5u)+e^{0.2i\tau}sinc(1.5\tau-5u)\right.\\
\label{bifrwdpi2k/2}&&\qquad\left.+e^{0.15i\tau}sinc(1.5\tau-5u-1)+e^{0.15i\tau}sinc(1.5\tau-5u+1)\right],
\end{eqnarray}
respectively.
Figures\ref{fig4}-Fig.\ref{fig6} show the detection and parameter estimation for the bi-component LFM signal (\ref{bisig}) with SNR=10dB by new NFrAF for $\theta=\frac{\pi}{4},\frac{\pi}{2}$ and $k=1,2,\frac{1}{2}$.\\
  Fig.\ref{fig4}(c)-(d)represents classical AF  and
Fig.\ref{fig5}(c)-(d) and Fig.\ref{fig6}(c)-(d) corresponds to SAF. Hence, we can see from Figs.\ref{fig4}–Figs.\ref{fig6} that the NFrAF benefits cross-term reduction while preserving perfect time–frequency resolution for suitable values of $k$ and $\theta$.

\end{itemize}



\subsection{CNN-Based Signal Classification Using Proposed NFrAF}\,\\


\begin{figure}[t]
\begin{framed}
\centering
\includegraphics[width=1.0\linewidth]{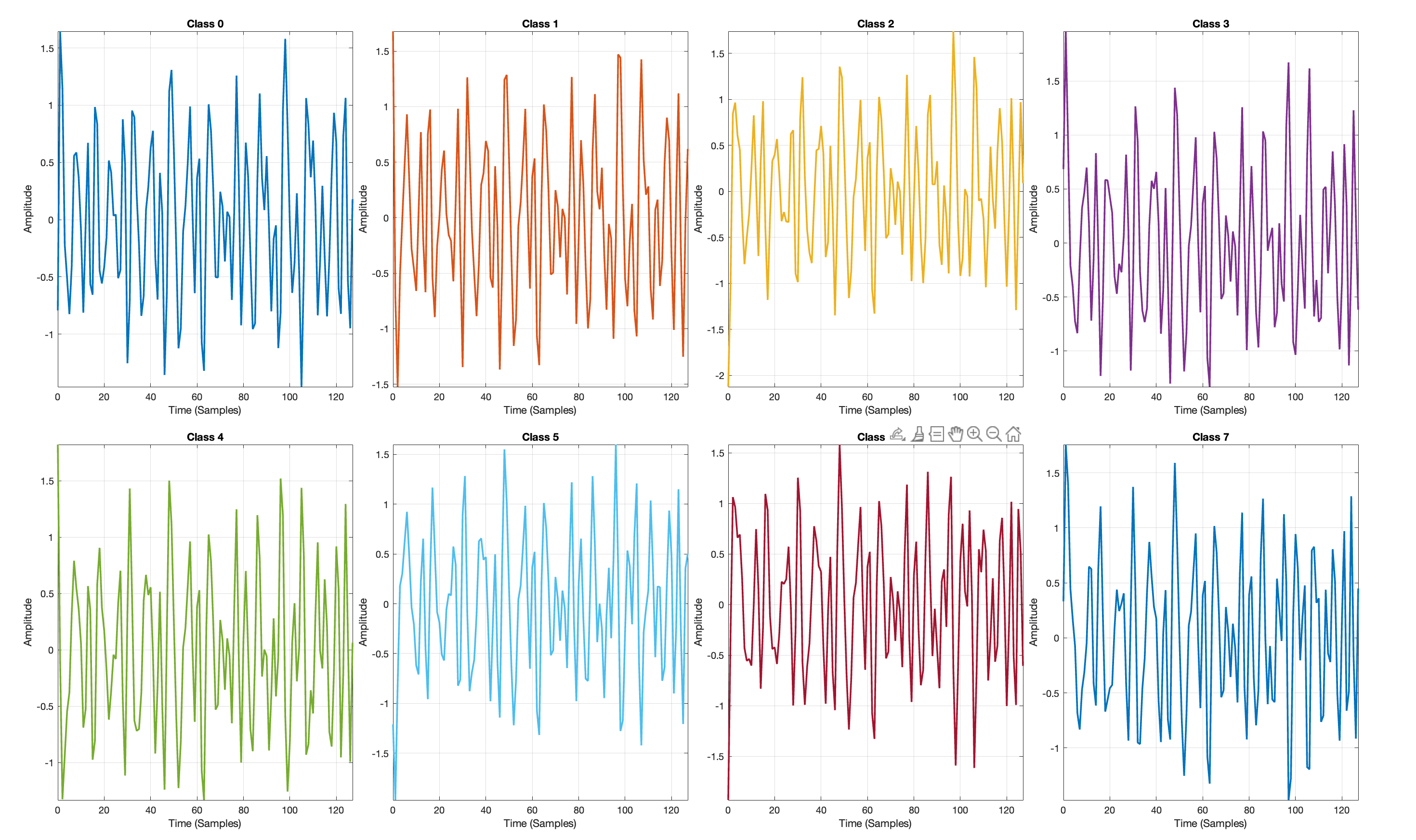}
\caption{Time-domain representations of the eight simulated signal classes.}
\label{fig:time_domain}
\end{framed}
\end{figure}


\begin{figure}[t]
\begin{framed}
\centering
\includegraphics[width=1.0\linewidth]{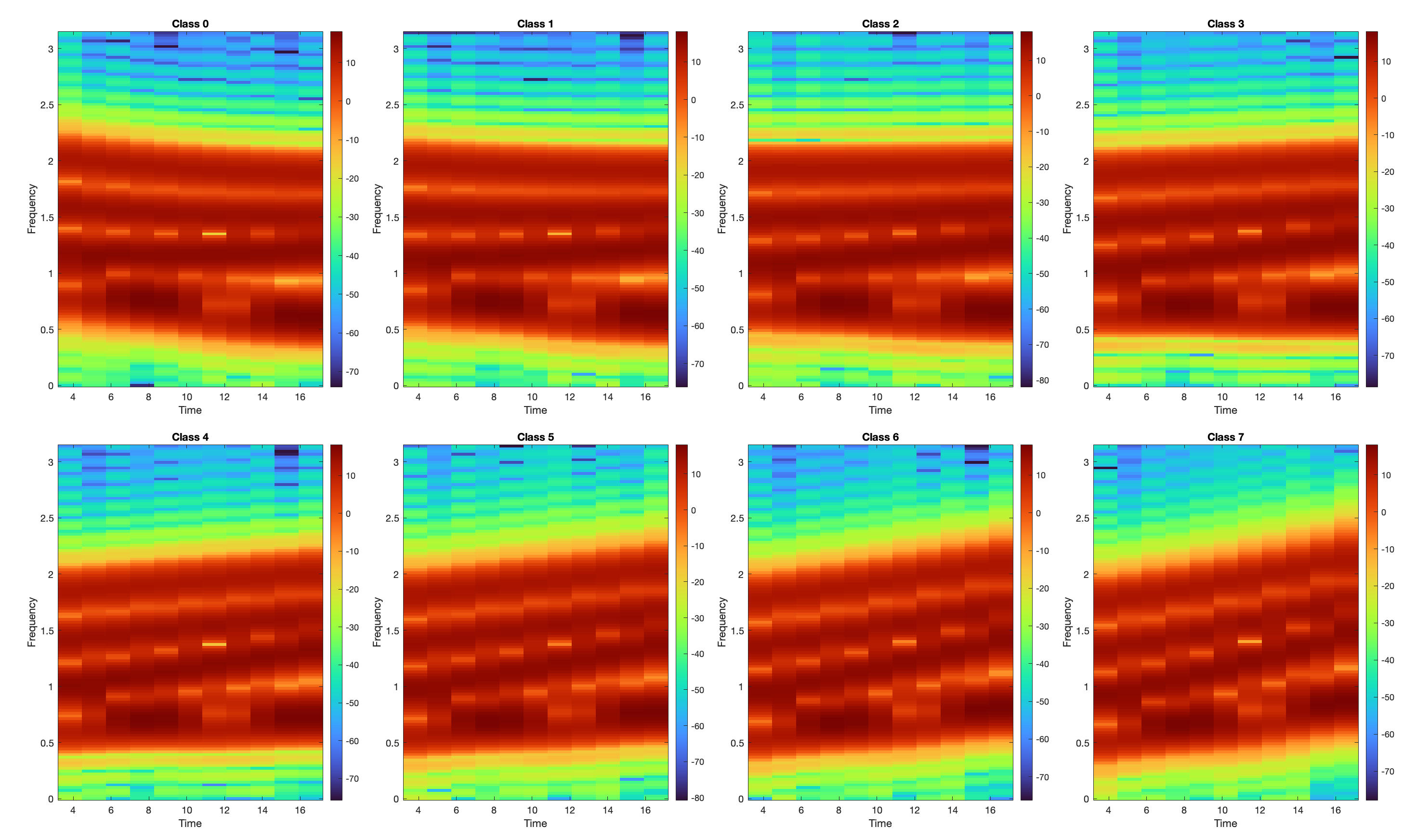}
\caption{STFT magnitude spectrograms of the eight signal classes using a Hann window of length $32$.}
\label{fig:spectrograms}
\end{framed}
\end{figure}


\begin{figure}[t]
\begin{framed}
\centering
\includegraphics[width=1.1\linewidth]{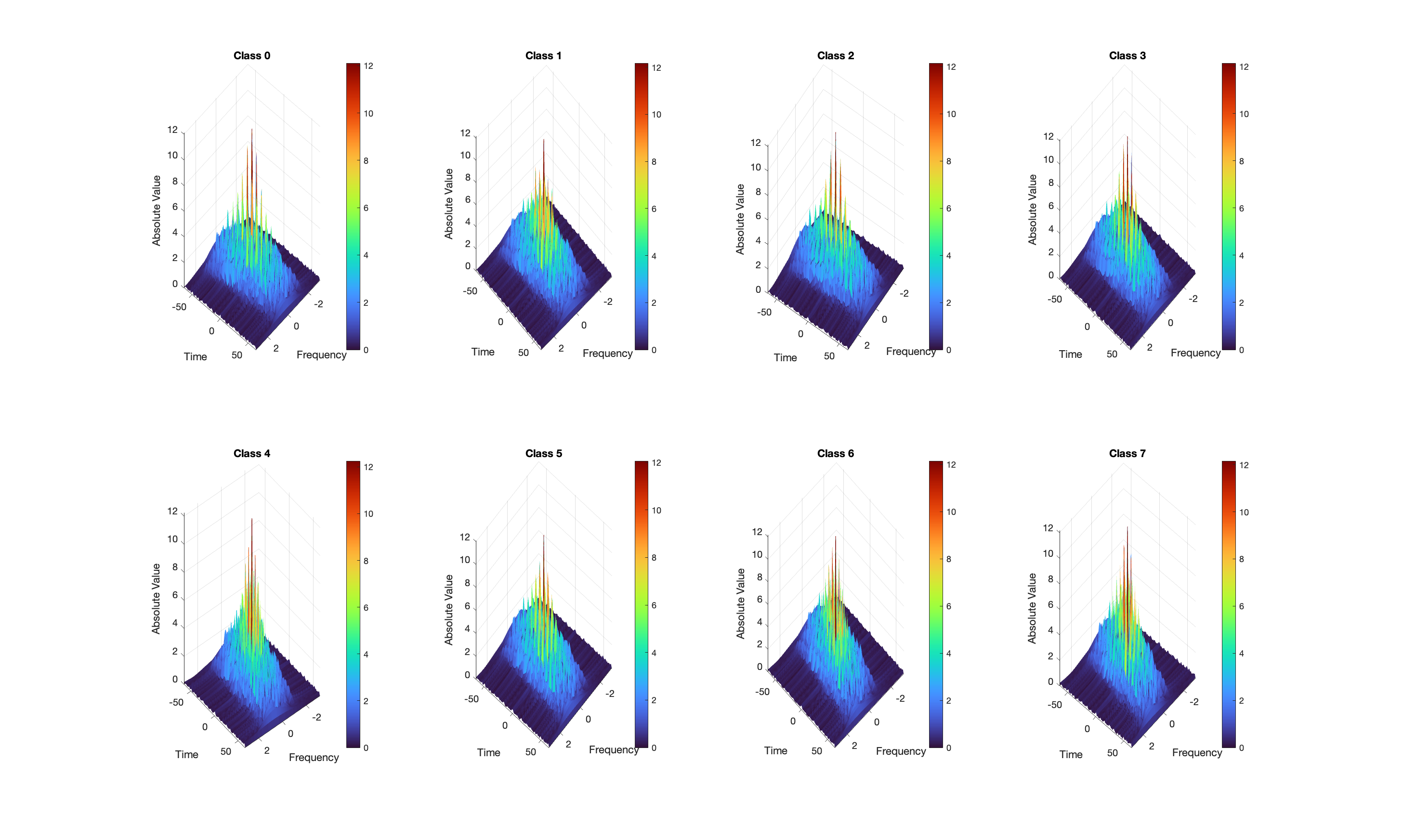}
\caption{Three-dimensional magnitude visualization of the classical ambiguity function (AF) for the eight signal classes.}
\label{fig:af_3d}
\end{framed}
\end{figure}


\begin{figure}[t]
\begin{framed}
\centering
\includegraphics[width=1.0\linewidth]{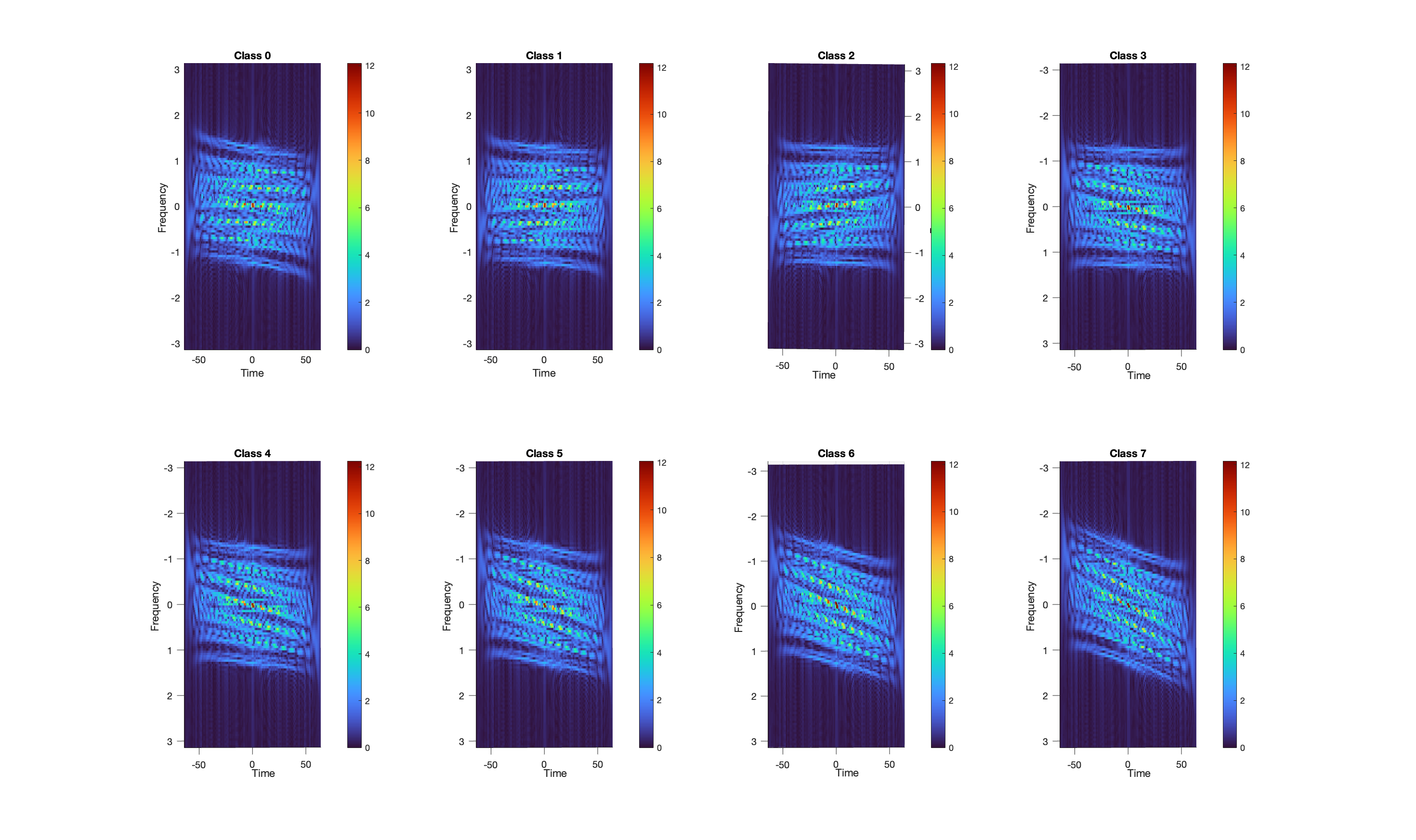}
\caption{Two-dimensional magnitude visualization of the classical ambiguity function (AF) for the eight signal classes.}
\label{fig:af_2d}
\end{framed}
\end{figure}


\begin{figure}[t]
\begin{framed}
\centering
\includegraphics[width=1.0\linewidth]{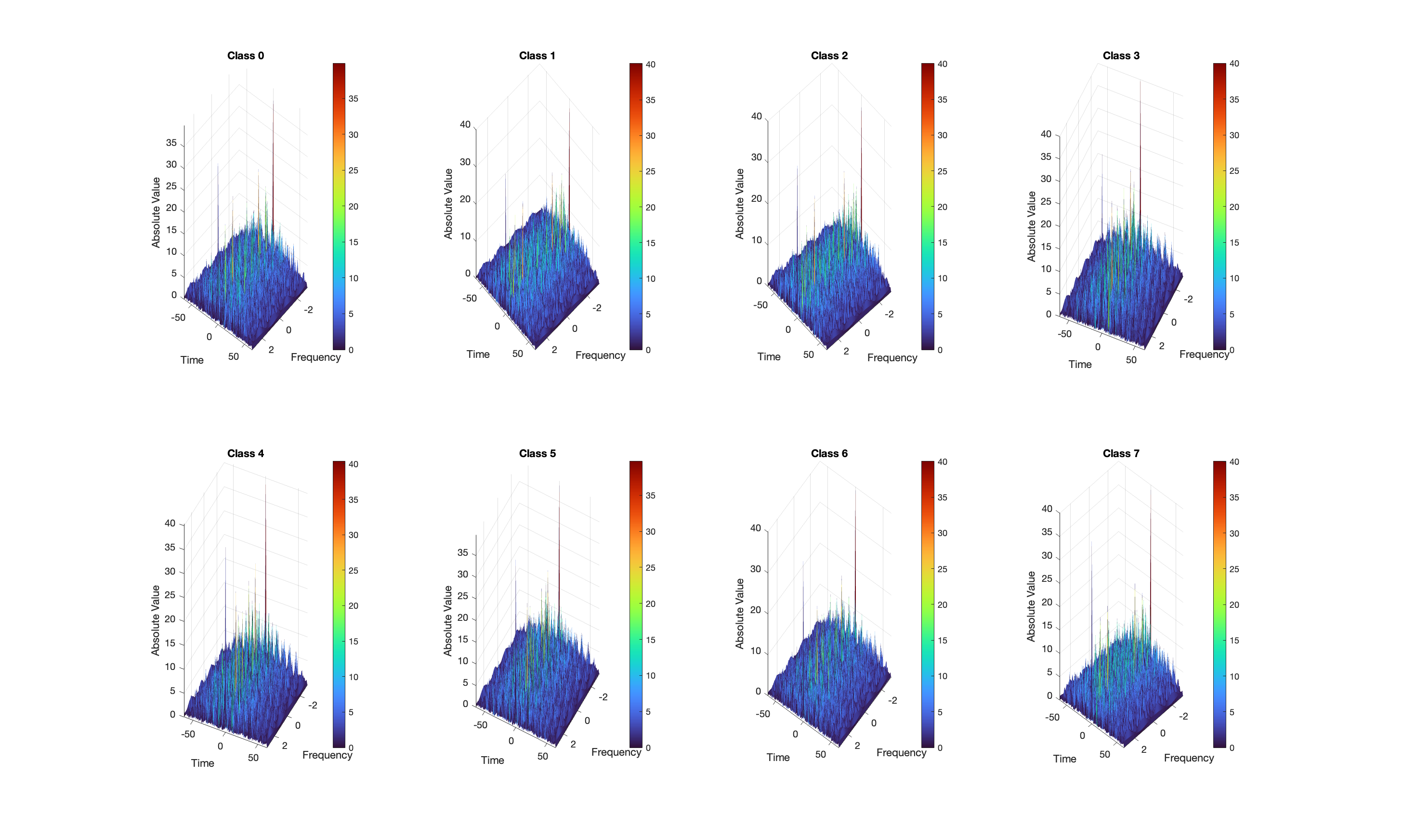}
\caption{Three-dimensional magnitude visualization of the proposed New Fractional Ambiguity Function (NFrAF) for the eight signal classes with $(\theta,k)=\left(\frac{\pi}{6},2\right)$.}
\label{fig:nfraf_3d}
\end{framed}
\end{figure}


\begin{figure}[t]
\begin{framed}
\centering
\includegraphics[width=1.0\linewidth]{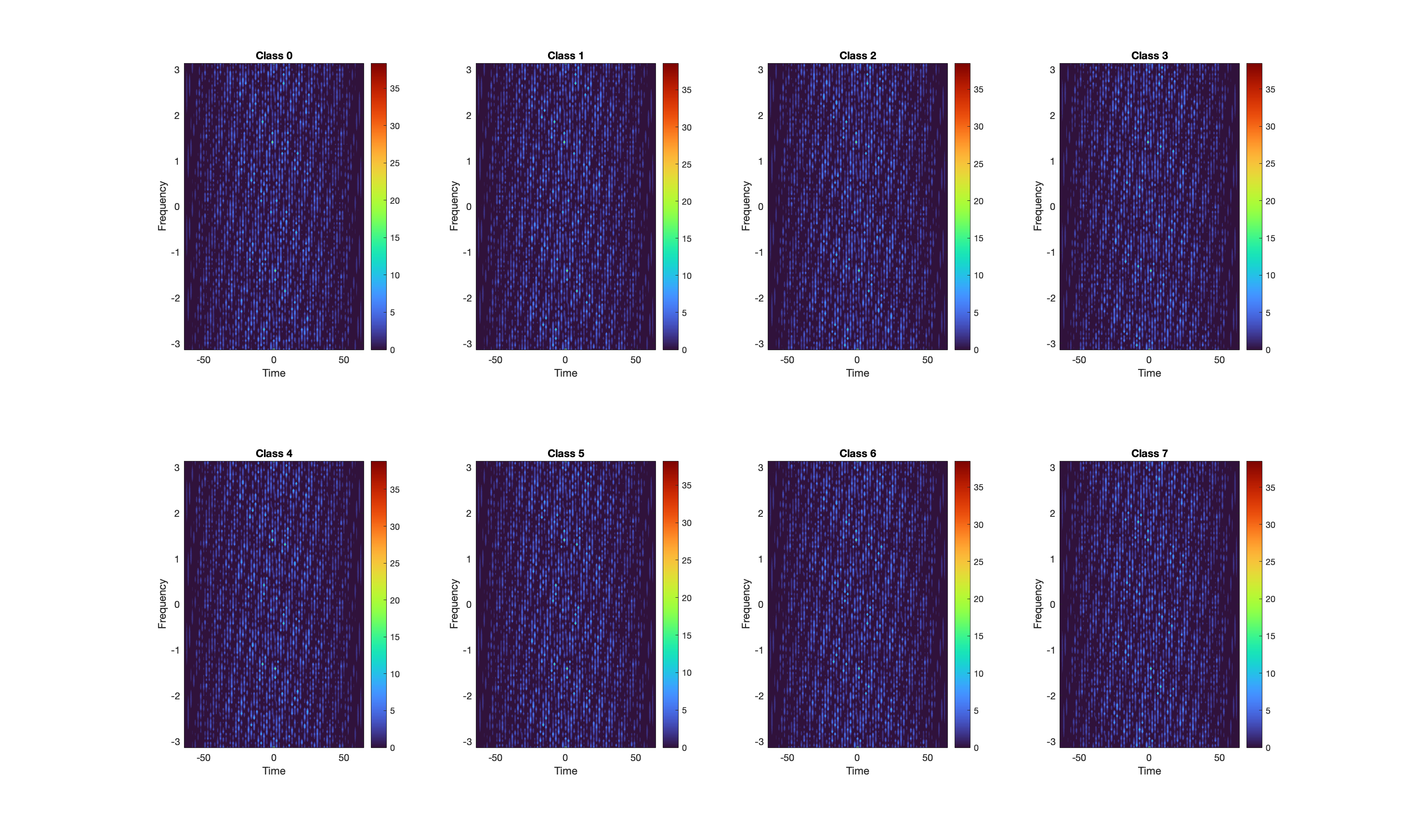}
\caption{Two-dimensional magnitude visualization of the proposed New Fractional Ambiguity Function (NFrAF) for the eight signal classes with $(\theta,k)=\left(\frac{\pi}{6},2\right)$.}
\label{fig:nfraf_2d}
\end{framed}
\end{figure}

To highlight the effectiveness of the NFrAF in signal classification tasks, we generated eight distinct classes of synthetic signals, resulting in a dataset comprising 2000 realizations. From this dataset, 1800 samples were designated for training and the remaining 200 samples were used for validation purposes. The validation set plays a crucial role in monitoring and preventing overfitting during the training process. For each simulated signal, the NFrAF, classical Ambiguity Function (AF), and the spectrogram were computed, ensuring that all three representations were derived from identical signal instances for a fair comparison. The spectrograms were computed using a Hann window of length 32 to capture local time-frequency structures. Prior to training, each representation—NFrAF, AF, and spectrogram—was individually normalized to have zero mean and unit variance, thereby standardizing the data and enhancing the performance of the learning model.

\subsubsection{Synthetic Signal Model}
\label{sec:synthetic-signal-model}

We consider a discrete-time complex-valued signal $x_c[n]$ of length $N=128$ samples,
associated with class $c \in \{1,\dots,8\}$ and time index $n \in \{0,\dots,N-1\}$.
Each signal is constructed as a superposition of weak class-independent background
components, a class-specific motif, and a class-dependent exponential chirp-rate
phase warp, followed by additive noise. Formally,
\begin{equation}
  x_c[n]
  =
  \bigl(
    x_{\mathrm{bg}}[n] +
    x_{\mathrm{st}}[n] +
    x_{\mathrm{motif},c}[n]
  \bigr)
  e^{j\psi_c[n]} + w[n],
  \qquad 0 \le n < N,
  \label{eq:general-xc}
\end{equation}
where $x_{\mathrm{bg}}[n]$ denotes a faint linear-chirp background,
$x_{\mathrm{st}}[n]$ is a faint stationary sinusoid,
$x_{\mathrm{motif},c}[n]$ encodes the class-dependent structure,
$\psi_c[n]$ is an exponential chirp-rate phase warp specific to class $c$,
and $w[n] \sim \mathcal{CN}(0,\sigma^2)$ is circularly symmetric complex-valued
additive white Gaussian noise (with $\sigma^2 = 10^{-8}$ in all experiments).

\paragraph{Background and stationary components.}
The class-independent components are defined as
\begin{align}
  x_{\mathrm{bg}}[n]
  &= A_{\mathrm{bg}}\,
     \exp\!\Bigl(
       j\bigl(2\pi(f_{\mathrm{bg}} n + \tfrac{1}{2} d_{\mathrm{bg}} n^2) + \phi\bigr)
     \Bigr),
  \label{eq:xbg} \\
  x_{\mathrm{st}}[n]
  &= A_{\mathrm{st}}\,
     \exp\!\bigl(
       j(2\pi f_{\mathrm{st}} n + \phi)
     \bigr),
  \label{eq:xst}
\end{align}
where $A_{\mathrm{bg}}=0.45$ and $A_{\mathrm{st}}=0.40$ are fixed amplitudes,
$f_{\mathrm{bg}}$ and $f_{\mathrm{st}}$ are base frequencies,
$d_{\mathrm{bg}}$ is a small chirp rate, and
$\phi \sim \mathcal{U}[0,2\pi)$ is a single global random phase shared by all
components within one signal realization.

The parameters $f_{\mathrm{bg}}, d_{\mathrm{bg}}, f_{\mathrm{st}}$ are drawn from
narrow normal distributions around class-independent anchor values and are then
linearly interpolated toward these anchors using a similarity-strength parameter
$\lambda \in [0,1]$:
\begin{equation}
  \tilde{v} = (1-\lambda)v + \lambda a,
  \label{eq:shrink}
\end{equation}
where $v$ denotes a randomly drawn parameter and $a$ its anchor value.
Larger $\lambda$ enforces stronger similarity across classes;
we use $\lambda = 0.85$ throughout.

\paragraph{Class-specific motifs.}
The motif for class $c$ is constructed as a small sum of complex exponentials,
\begin{equation}
  x_{\mathrm{motif},c}[n]
  =
  \sum_{k=1}^{K_c}
    a_{c,k}
    \exp\!\Bigl(
      j\bigl(2\pi(f_{c,k} n + \tfrac{1}{2} d_{c,k} n^2) + \phi\bigr)
    \Bigr),
  \label{eq:motif}
\end{equation}
where $a_{c,k}$ are amplitudes, $f_{c,k}$ base frequencies,
$d_{c,k}$ chirp rates (zero for stationary tones), and $K_c$ the number of
components in class $c$.

The eight classes are organized into four nearly indistinguishable pairs:
\begin{itemize}
  \item Classes $1,2$: double linear-chirp motifs with closely spaced chirp rates,
        where class~2 additionally includes a weak amplitude modulation.
  \item Classes $3,4$: near-harmonic stacks around a fundamental frequency,
        with class~3 containing an additional weak harmonic.
  \item Classes $5,6$: a single-tone signal versus a very closely spaced
        two-tone mixture.
  \item Classes $7,8$: one or two Gaussian-windowed tones with slightly different
        temporal centers and spreads.
\end{itemize}
Within each pair, the time--frequency signatures of the motifs are deliberately
made highly overlapping so that spectrograms and classical ambiguity functions
appear visually similar across classes.

\paragraph{Exponential chirp-rate phase warp.}
The key class-defining mechanism is the exponential chirp-rate phase warp
\begin{equation}
  \psi_c[n]
  =
  \begin{cases}
    2\pi f_0 (n-n_0), & \beta_c = 0, \\[0.5em]
    2\pi f_0 \dfrac{\exp(\beta_c(n-n_0)) - 1}{\beta_c}, & \beta_c \neq 0,
  \end{cases}
  \label{eq:exp-warp}
\end{equation}
where $f_0$ is a small reference frequency,
$n_0=(N-1)/2$ is the temporal center,
and $\beta_c$ is a class-dependent exponential chirp-rate parameter.
The values $\{\beta_c\}_{c=1}^8$ are chosen on an evenly spaced grid over
$[-\beta_{\max}, \beta_{\max}]$.

This construction ensures continuity with linear phase evolution when
$\beta_c \to 0$, while inducing an exponential instantaneous-frequency law
for $\beta_c \neq 0$. The range of $\beta_c$ is kept sufficiently small so
that neither the time-domain waveforms nor the spectrograms differ
substantially across classes, while affine time--frequency representations
such as the NFrAF remain highly sensitive to these chirp-rate variations.

\paragraph{Dataset and representations.}
For each class, $250$ independent realizations are generated, yielding a
balanced dataset of $2000$ signals. For every signal, we compute three
real-valued time--frequency representations: a normalized STFT magnitude
spectrogram, the classical ambiguity function, and the NFrAF with $(\theta, k)=\left(\frac{\pi}{6},2\right).$ Each representation is
independently normalized to $[0,1]$ on a per-sample basis before
classification.

\paragraph{Implementation details.}
The synthetic signals defined above are generated using a reproducible
implementation that closely follows the mathematical model. Minor
implementation details, including exact parameter values and class-wise
constants, are provided in Appendix~A for completeness.

\begin{table}[t]
\centering
\small
\setlength{\tabcolsep}{6pt}
\caption{Key parameters used in synthetic signal generation.}
\label{tab:gen_params}
\begin{tabular}{ll}
\hline
\textbf{Parameter} & \textbf{Description} \\
\hline
$N$ & Number of samples per signal ($128$) \\
$C$ & Number of classes ($8$) \\
$N_c$ & Number of realizations per class ($250$) \\
$\sigma^2$ & Variance of complex Gaussian noise ($10^{-8}$) \\
$A_{\mathrm{bg}}$ & Amplitude of linear-chirp background component \\
$A_{\mathrm{st}}$ & Amplitude of stationary sinusoidal component \\
$f_{\mathrm{bg}}$ & Base frequency of background chirp \\
$d_{\mathrm{bg}}$ & Chirp rate of background component \\
$f_{\mathrm{st}}$ & Frequency of stationary sinusoid \\
$\lambda$ & Similarity-strength parameter controlling inter-class overlap \\
$K_c$ & Number of motif components in class $c$ \\
$f_{c,k}$ & Base frequency of $k$-th motif component (class $c$) \\
$d_{c,k}$ & Chirp rate of $k$-th motif component \\
$\beta_{\max}$ & Maximum magnitude of exponential chirp-rate parameter \\
$\{\beta_c\}$ & Class-dependent exponential warp parameters \\
$\theta$ & Shear angle used in NFrAF computation ($\pi/6$) \\
$k$ & Scale factor used in NFrAF computation ($2$) \\
\hline
\end{tabular}
\end{table}

\noindent
Table~\ref{tab:gen_params} summarizes the key parameters governing the synthetic
signal generation process. The parameters are chosen to control signal length,
class balance, noise level, and the relative contribution of background,
motif, and affine-warp components in a systematic and reproducible manner.
The number of samples per signal ($N=128$) and the number of realizations per
class ($N_c=250$) are fixed across all experiments to ensure balanced class
representation and consistent input dimensionality for the learning models.

The amplitudes $A_{\mathrm{bg}}$ and $A_{\mathrm{st}}$ define weak, class-independent
background components that are common to all signals and are deliberately kept
small relative to the motif energy. The associated frequencies
$f_{\mathrm{bg}}$, $f_{\mathrm{st}}$ and chirp rate $d_{\mathrm{bg}}$ are drawn from
narrow distributions around anchor values and then shrunk toward those anchors
using the similarity-strength parameter $\lambda$. This mechanism enforces strong
overlap of background statistics across classes, preventing trivial
discrimination based on global spectral differences.

The class-specific motif parameters $K_c$, $f_{c,k}$, and $d_{c,k}$ determine the
number and structure of the sinusoidal or chirp components used to construct
each class. These parameters vary only subtly across class pairs, ensuring that
the resulting time–frequency signatures remain highly similar under
conventional representations such as spectrograms or classical ambiguity
functions.

The exponential chirp-rate warp parameters $\{\beta_c\}$, bounded by
$\beta_{\max}$, constitute the primary source of class discrimination in the
dataset. The range of $\beta_c$ is selected to be sufficiently small so that the
time-domain waveforms and STFT spectrograms remain visually similar across
classes, while affine-aware time–frequency representations are able to capture
the resulting differences. Finally, the parameters $\theta$ and $k$ specify the
shear angle and scale factor used in computing the NFrAF representation and are
held fixed across all experiments to ensure a fair comparison with other
representations.

\noindent
The values of $\{\beta_c\}$ are chosen on an evenly spaced grid over
$[-\beta_{\max},\beta_{\max}]$, while all other parameters are either
class-independent or weakly perturbed around common anchor values.
This design ensures that class discrimination is driven primarily by
the exponential chirp-rate warp rather than by trivial spectral or
energy differences.

\begin{table}[t]
\centering
\small
\setlength{\tabcolsep}{6pt}
\caption{Class-wise motif structures and defining parameters used in the synthetic signal model.}
\label{tab:class_params}
\begin{tabular}{c p{5.2cm} p{6.2cm}}
\hline
\textbf{Class} & \textbf{Motif components} & \textbf{Distinguishing characteristics} \\
\hline
1 &
Two linear chirp components &
Closely spaced chirp rates; no amplitude modulation \\
\hline
2 &
Two linear chirp components &
Similar to Class~1, with additional weak amplitude modulation \\
\hline
3 &
Near-harmonic stack &
Fundamental and higher harmonics; includes an additional weak harmonic component \\
\hline
4 &
Near-harmonic stack &
Similar to Class~3 but without the extra harmonic \\
\hline
5 &
Single stationary tone &
Purely harmonic tone with fixed frequency \\
\hline
6 &
Two closely spaced tones &
Very small frequency separation between tones, producing near-overlapping spectra \\
\hline
7 &
Gaussian-windowed tone(s) &
One or two time-localized tones with specific temporal centers \\
\hline
8 &
Gaussian-windowed tone(s) &
Similar to Class~7 with slightly shifted temporal centers or spreads \\
\hline
\end{tabular}
\end{table}

\noindent
Within each class pair, the motif structures are deliberately designed to
produce highly overlapping time--frequency signatures under conventional
spectrogram and classical ambiguity-function analysis, ensuring that class
discrimination relies primarily on the exponential chirp-rate warp.

\noindent
Table~\ref{tab:class_params} summarizes the class-wise motif structures used in
the synthetic signal generator. The eight classes are organized into four
pairs, where each pair is designed to be nearly indistinguishable under
conventional time--frequency analysis. Within each pair, the constituent
components differ only by subtle structural variations, such as the presence
or absence of a weak harmonic, a slight amplitude modulation, or a small
temporal shift of a localized component.

For Classes~1 and~2, both motifs consist of two linear chirps with closely
spaced chirp rates; Class~2 includes an additional weak amplitude modulation,
which produces only marginal visual differences in the spectrogram domain.
Classes~3 and~4 are near-harmonic stacks centered around a fundamental
frequency, differing only by the inclusion of a weak higher-order harmonic in
Class~3. Classes~5 and~6 contrast a single stationary tone with a two-tone
mixture having an extremely small frequency separation, resulting in almost
identical spectral envelopes. Finally, Classes~7 and~8 consist of one or two
Gaussian-windowed tones whose temporal centers and spreads differ slightly,
leading to overlapping time--frequency support.

Across all class pairs, the background and stationary components are identical
and the motif amplitudes are chosen to be weak relative to the overall signal
energy. Consequently, neither time-domain waveforms nor STFT-based
spectrograms provide reliable class-specific cues. Instead, class identity is
primarily encoded through the exponential chirp-rate phase warp described in
Section~\ref{sec:synthetic-signal-model}, which becomes explicitly aligned with
the affine structure of the NFrAF representation.

\paragraph{Qualitative visualization of the synthetic signals.}
Figure~\ref{fig:time_domain} illustrates representative time-domain realizations
of the eight synthetic signal classes. As intended by the design of the signal
model, the waveforms appear visually similar across classes, with no obvious
class-specific cues observable directly in the temporal domain. This confirms
that class identity is not trivially encoded in amplitude or waveform shape.

Figure~\ref{fig:spectrograms} shows the corresponding STFT magnitude spectrograms
computed using a Hann window of length $32$. Despite minor local variations,
the spectrograms of different classes exhibit strongly overlapping
time--frequency support and highly similar energy distributions. In particular,
signals belonging to paired classes (e.g., Classes~1--2, 3--4, etc.) are
visually difficult to distinguish, highlighting the limited discriminative
power of conventional short-time Fourier analysis for this dataset.

The classical ambiguity function (AF) representations are shown in
Figures~\ref{fig:af_3d} and~\ref{fig:af_2d}. While the AF reveals additional
structure related to chirp content and signal auto-correlation, the magnitude
patterns remain broadly similar across classes. The exponential chirp-rate
differences introduced by the class-dependent warp do not produce clearly
separable signatures in the classical AF, particularly when visualized in
two dimensions.

In contrast, Figures~\ref{fig:nfraf_3d} and~\ref{fig:nfraf_2d} display the
magnitude of the proposed New Fractional Ambiguity Function (NFrAF) computed
with shear angle $\theta=\pi/6$ and scale factor $k=2$. Here, the effect of the
exponential chirp-rate warp becomes markedly more pronounced. The NFrAF
representation aligns with the affine structure induced by the exponential
phase warping, resulting in sharper, more class-specific concentration
patterns. Signals that are nearly indistinguishable in the time domain,
spectrogram, and classical AF domains exhibit visibly distinct signatures
in the NFrAF domain, providing qualitative evidence for the enhanced
discriminative capability of affine-aware time--frequency representations.

\subsubsection{Deep Neural Network Architecture}
\label{sec:cnn-architecture}

We employ convolutional neural networks (CNNs) operating on the two-dimensional
time frequency images derived from the synthetic signals described in
Section~\ref{sec:synthetic-signal-model}. To ensure a strictly fair comparison
across representations, the \emph{same} CNN architecture is used for all three
inputs: the spectrogram, the classical ambiguity function (AF), and the NFrAF.
All representations are treated as single-channel images and processed by a
shared network architecture (denoted \texttt{SharedCNN}), which is trained
independently for each representation using identical hyperparameters and
identical train/validation/test split indices.

Let $X \in \mathbb{R}^{H \times W}$ denote a scalar-valued time--frequency
representation. Since the raw output dimensions may differ across transforms,
an adaptive average pooling layer is first applied to resize each input to a
fixed spatial resolution of $128 \times 128$, yielding a standardized input
tensor of size $1 \times 128 \times 128$.

The standardized input is then processed by a compact CNN trunk composed of
three $3 \times 3$ convolutional layers with ReLU activations. The first two
convolutional layers map the input from one channel to 16 and then 32 feature
maps, respectively, followed by a $2 \times 2$ max-pooling operation that
reduces the spatial resolution. A third $3 \times 3$ convolution increases the
channel dimension to 64 feature maps. The resulting feature maps are compressed
using adaptive average pooling to a fixed spatial size of $4 \times 4$, then
flattened and passed through a fully connected layer with 128 hidden units and
ReLU activation. A final linear layer produces eight output logits,
corresponding to the eight synthetic classes. The softmax nonlinearity is
implicitly handled by the cross-entropy loss during training. The overall
architecture is illustrated in Fig.~\ref{fig:block_cnn_arch}.

All networks are trained using the Adam optimizer with a learning rate of
$10^{-3}$ and a batch size of $32$ for $12$ epochs. The dataset is split into
$70\%$ training, $15\%$ validation, and $15\%$ test sets using a stratified
sampling strategy. To ensure a fair comparison, the same split indices are
reused across all representations.

\begin{figure}[t!]
\centering
\begin{framed}
\includegraphics[width=0.9\linewidth]{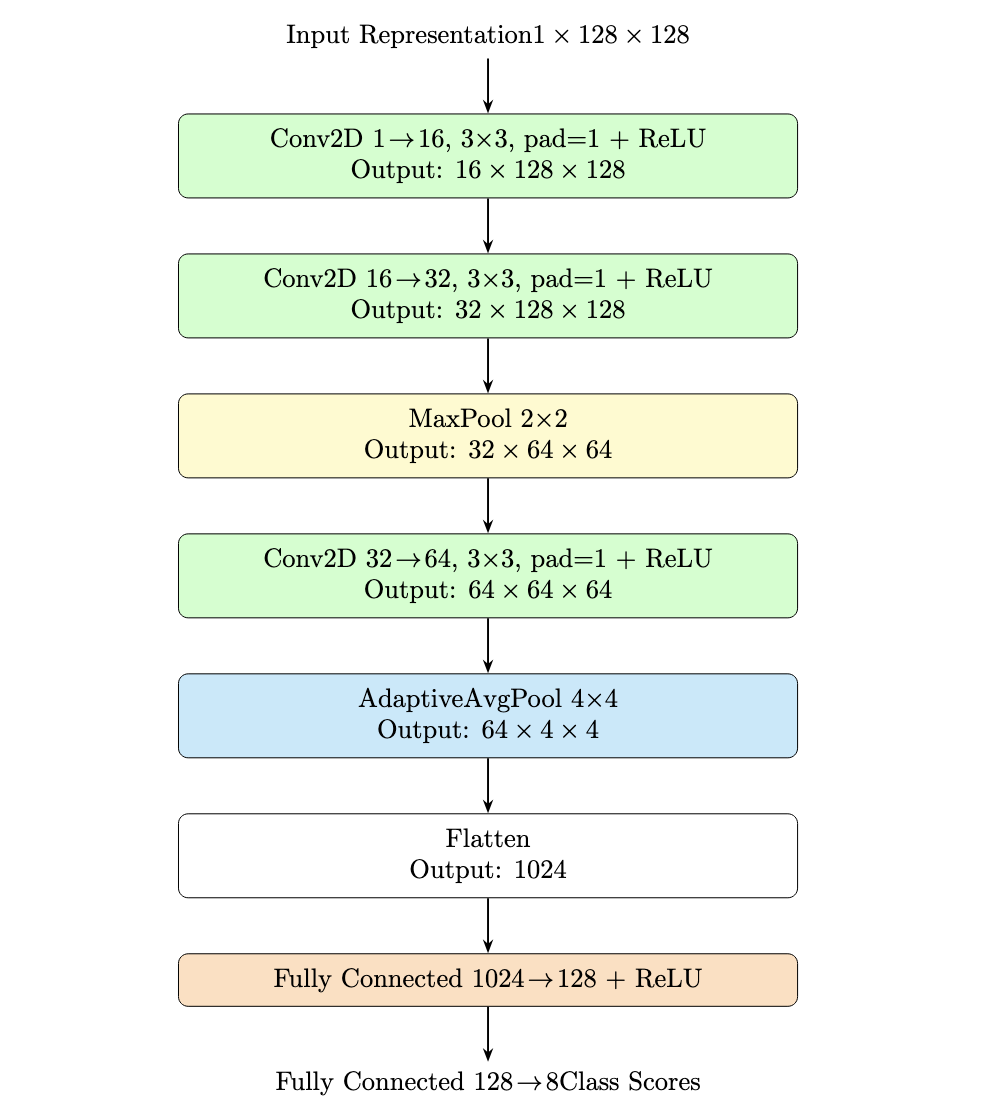}









\caption{Block-wise architecture of the shared CNN used for Spectrogram, AF, and NFrAF inputs.}
\label{fig:block_cnn_arch}
\end{framed}
\end{figure}

\subsubsection{Classification Results}
\label{sec:classification-results}

We evaluate the shared CNN architecture described in Subsubsection~\ref{sec:cnn-architecture} on the chirp regime of the synthetic dataset using a stratified test set comprising $15\%$ of the signals from each of the eight classes. Performance is reported in terms of overall test accuracy and micro-averaged receiver operating characteristic (ROC) area under the curve (AUC), computed in a one-vs-rest manner across all classes. For fairness, the same train/validation/test split indices and the same CNN architecture are used independently for spectrogram, AF, and NFrAF inputs.

In the chirp regime, class identity is encoded through very small differences in chirp slope, making the time--frequency patterns of different classes highly similar in the spectrogram domain. As a result, the spectrogram-based CNN exhibits the lowest classification performance among the three representations, with noticeable confusion between classes that differ only in subtle chirp-rate variations. The AF-based CNN improves upon the spectrogram, reflecting the benefit of quadratic time--frequency representations for resolving multi-chirp structure, but still suffers from moderate cross-term interference.

The best performance is obtained using the proposed NFrAF representation. The NFrAF-based CNN achieves the highest test accuracy and ROC--AUC among all three methods in the chirp regime, indicating that even in scenarios where the signal is not purely affine-warped, the affine-sensitive representation provides more stable and discriminative features than the spectrogram and classical AF.

\begin{figure}[h!]
\begin{framed}
    \centering
    \includegraphics[width=0.9\linewidth]{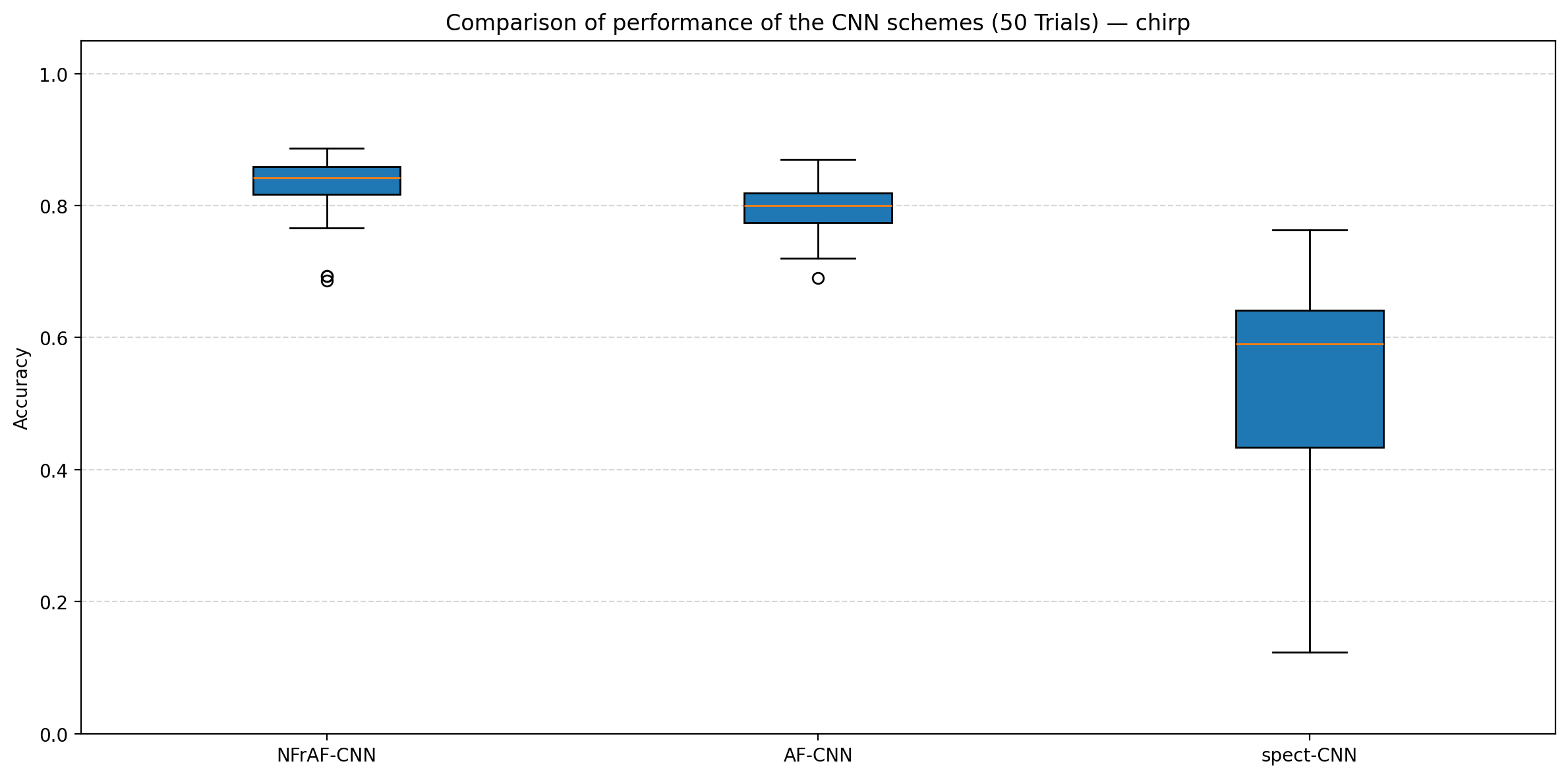}
    \caption{Box plot of test accuracy over 50 independent trials for the chirp regime using spectrogram, AF, and NFrAF representations.}
    \label{fig:boxplot_chirp}
    \end{framed}
\end{figure}

To assess robustness, we repeat the entire training and evaluation process over 50 independent trials with different random seeds affecting signal generation and network initialization. The distribution of test accuracies across trials is summarized in {\bf Figure~\ref{fig:boxplot_chirp}}. The box plot shows that the NFrAF-based CNN achieves a higher median accuracy and a tighter interquartile range than both spectrogram- and AF-based systems, demonstrating superior consistency and reduced sensitivity to random initialization.

{\bf Figure~\ref{fig:confmats_chirp}} shows the confusion matrices obtained in a representative trial for the three representations. The spectrogram-based classifier exhibits widespread off-diagonal errors, particularly between classes with very similar chirp slopes. The AF-based classifier reduces these confusions but still shows noticeable ambiguity caused by cross-term artifacts. In contrast, the NFrAF-based classifier produces a much more diagonal confusion matrix, indicating more reliable separation of classes.

\begin{figure}[!htbp]
\begin{framed}
\centering
\subfigure[Spectrogram]{\includegraphics[width=0.48\textwidth]{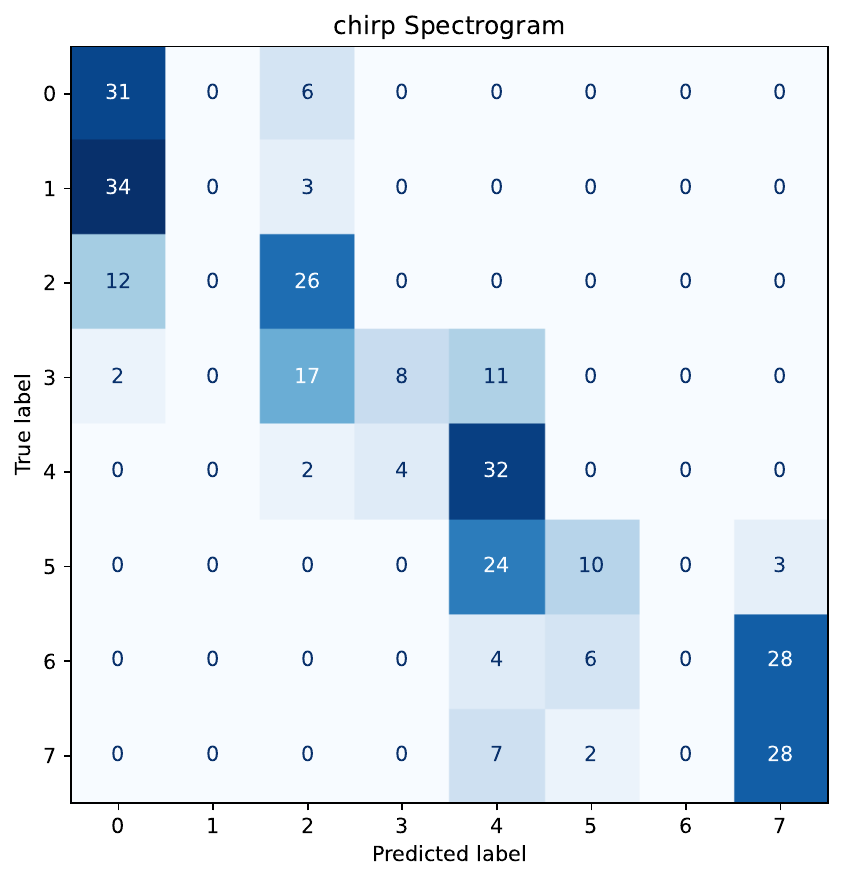}}
    \subfigure[Classical AF]{\includegraphics[width=0.48\textwidth]{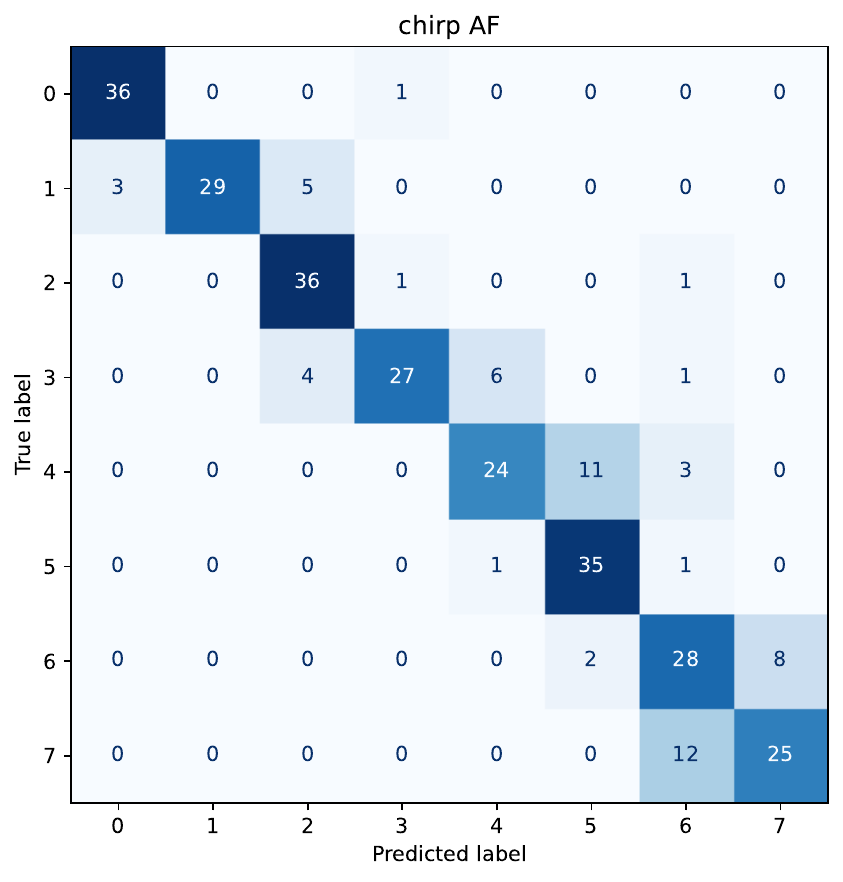}}
    \subfigure[NFrAF]{\includegraphics[width=0.48\textwidth]{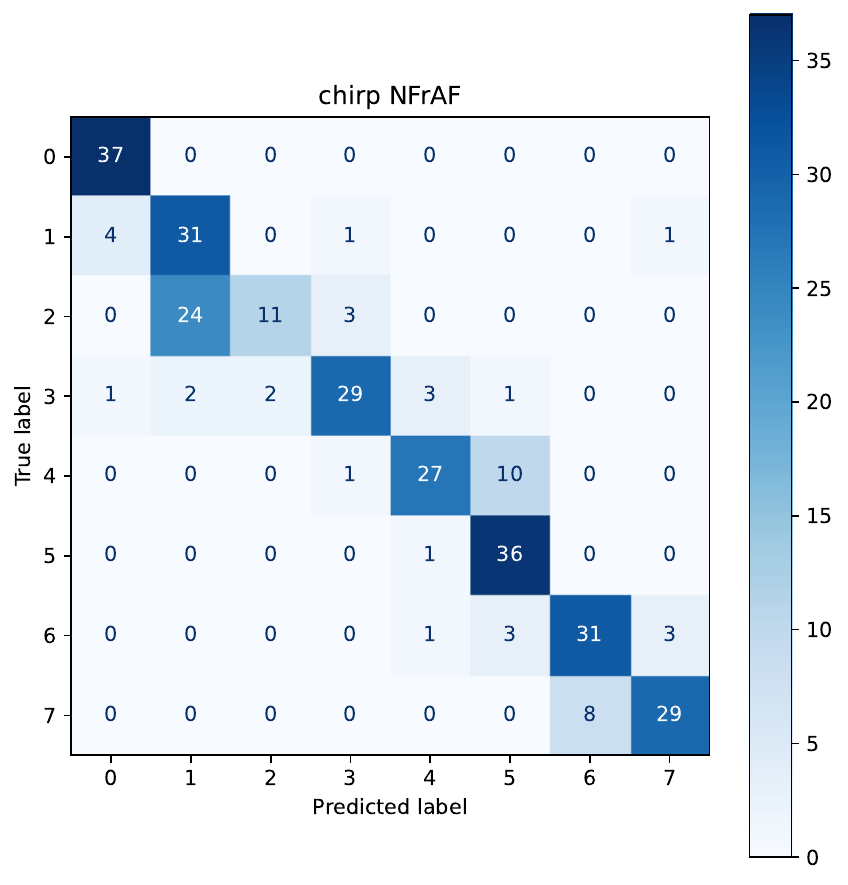}}
    \caption{Confusion matrices for the chirp regime using (a) spectrogram, (b) AF, and (c) NFrAF representations.}
  \label{fig:confmats_chirp}

\end{framed}
\end{figure}

\begin{figure}[t]
\begin{framed}
\centering
\includegraphics[width=0.7\linewidth]{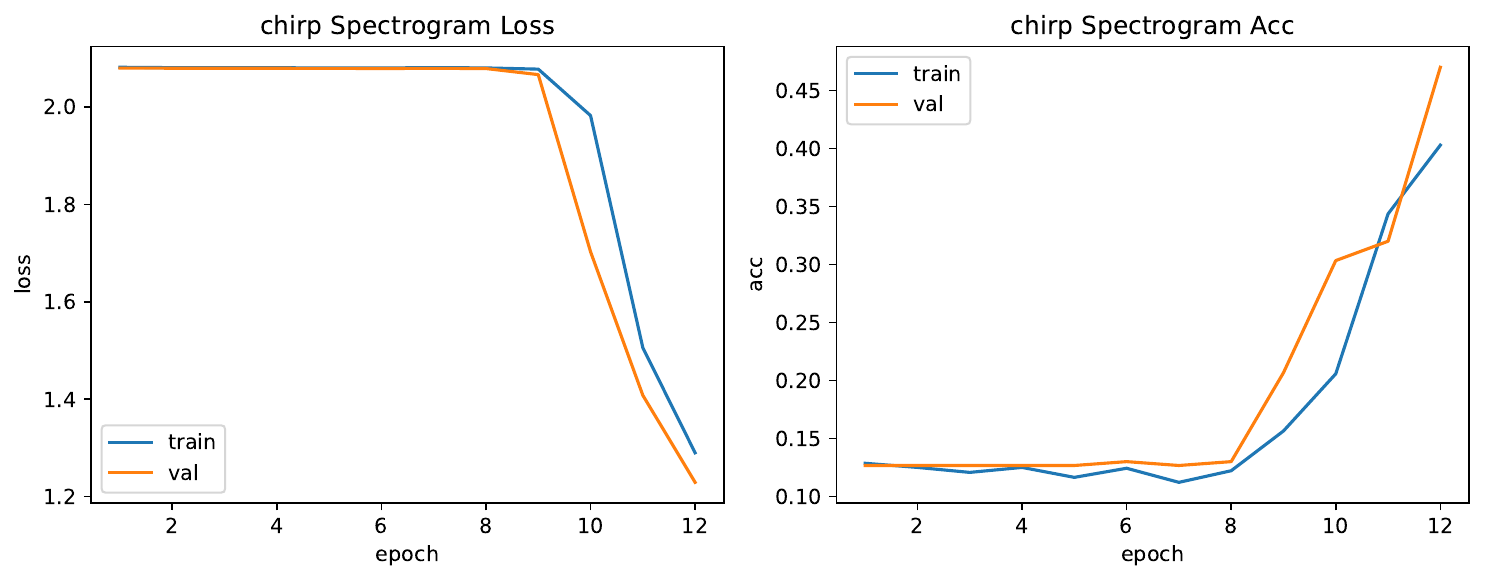}
\caption{Training and validation loss and accuracy curves for the chirp regime using spectrogram input.}
\label{fig:curves_chirp_spec}
\end{framed}
\end{figure}

\begin{figure}[t]
\begin{framed}
\centering
\includegraphics[width=0.7\linewidth]{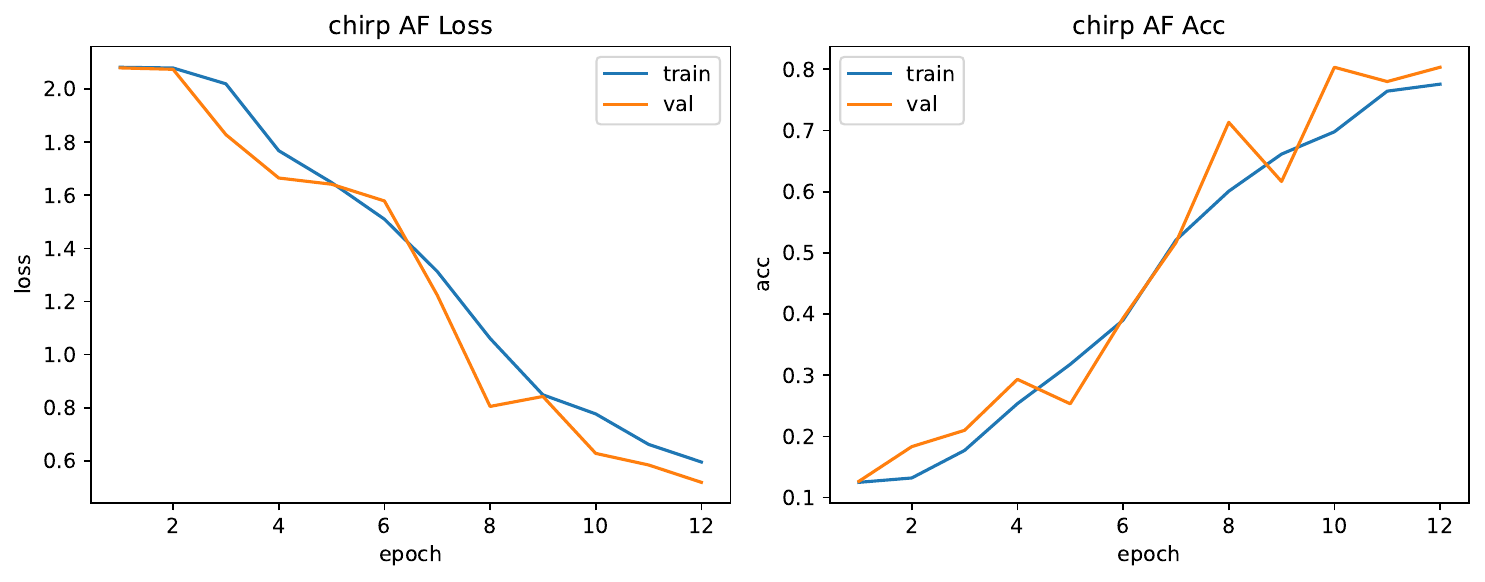}
\caption{Training and validation loss and accuracy curves for the chirp regime using AF input.}
\label{fig:curves_chirp_af}
\end{framed}
\end{figure}

\begin{figure}[t]
\begin{framed}
\centering
\includegraphics[width=0.7\linewidth]{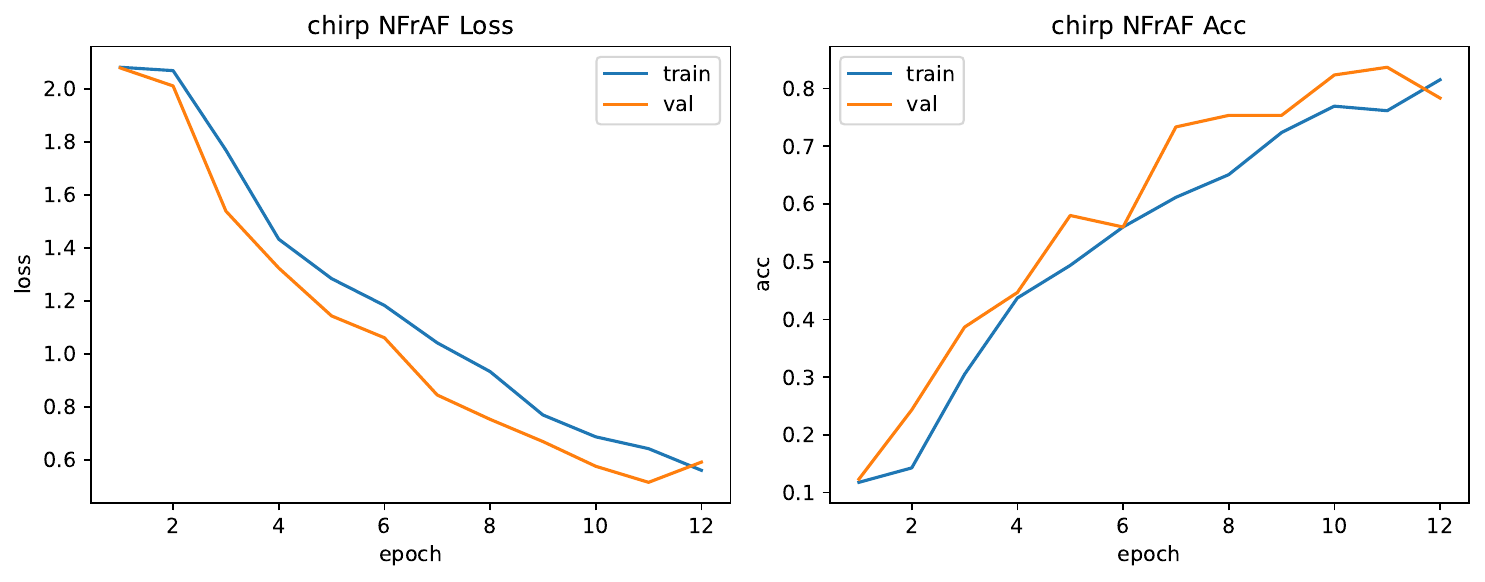}
\caption{Training and validation loss and accuracy curves for the chirp regime using NFrAF input.}
\label{fig:curves_chirp_nfraf}
\end{framed}
\end{figure}

Further insight into the behavior of the three representations is provided by the training and validation loss and accuracy curves shown in Figures~\ref{fig:curves_chirp_spec}--\ref{fig:curves_chirp_nfraf}. For the spectrogram-based CNN (Fig.~\ref{fig:curves_chirp_spec}), training accuracy increases slowly and saturates at a relatively low level, while the validation accuracy remains consistently lower and exhibits noticeable fluctuations. This behavior indicates that the spectrogram representation does not provide sufficiently discriminative features for separating classes that differ only in subtle chirp-rate variations, leading to both underfitting and unstable generalization.

The AF-based CNN (Fig.~\ref{fig:curves_chirp_af}) demonstrates faster convergence and improved training accuracy compared to the spectrogram, reflecting the advantage of quadratic time--frequency representations in resolving multi-chirp structure. However, the validation curves still show moderate oscillations and a persistent gap between training and validation performance, suggesting sensitivity to cross-term artifacts and limited robustness across trials.

In contrast, the NFrAF-based CNN (Fig.~\ref{fig:curves_chirp_nfraf}) exhibits the most stable and consistent learning behavior. Both training and validation losses decrease smoothly, and the corresponding accuracy curves converge rapidly with a small generalization gap. The reduced variance between training and validation performance indicates that the affine-sensitive representation yields more structured and class-consistent features, enabling the network to learn discriminative patterns more reliably. This stable convergence behavior is consistent with the higher median accuracy and reduced variability observed in the box-plot analysis, and further supports the effectiveness of NFrAF for chirp-based classification.

\section{Conclusion}\label{sec 5}

In order to improve cross-term suppression and time-frequency resolution, this work proposed a new fractional ambiguity function (NFrAF), which is a generalization of the classical ambiguity function with additional fractional parameters. Additionally, we showed that the NFrAF significantly outperforms the spectrogram and classical ambiguity function in signal classification tasks on simulated datasets, making it an extremely useful input representation for CNN-based machine-learning frameworks.

\section*{Declarations}
\begin{itemize}
\item  Availability of data and materials: The data is provided on the request to the authors.
\item Competing interests: The authors have no competing interests.
  \item Use of AI tools declaration:
The authors declare they have not used Artificial Intelligence (AI) tools in the creation of this article.
\item Funding: No funding was received for this work
\item Author's contribution: All the authors equally contributed towards this work.

\end{itemize}

\section*{{\bf Appendix A}}

{\bf Proof of Eq.(\ref{stftrel}).}
Taking $z=t+k\frac{\tau}{2}$, (\ref{eqn def cbqpwd}) yields
\begin{eqnarray*}
\mathcal  A^{\theta,k}_{\bf x}\left(\tau,u\right)
&=&{|\Omega_\theta|^2}\int_{\mathbb R}{\bf x}\left(z\right){\bf x}^*\left(z-k\tau\right)e^{i
\left(z-k\frac{\tau}{2}\right)\left(k\tau\cot\theta-u\csc\theta\right)}dz
\end{eqnarray*}
and now
\begin{eqnarray*}
&&\mathcal  A^{\theta,k}_{\bf x}\left(\frac{\tau}{k},u{\sin\theta}+\tau{\cos\theta}\right)\\
&&={|\Omega_\theta|^2}\int_{\mathbb R}{\bf x}\left(z\right){\bf x}^*\left(z-\tau\right)e^{i
\left(z-\frac{\tau}{2}\right)\left[{\tau}\cot\theta-\left(u{\sin\theta}+\tau{\cos\theta}\right)\csc\theta\right]}dz\\
&&={|\Omega_\theta|^2}\int_{\mathbb R}{\bf x}\left(z\right){\bf x}^*\left(z-\tau\right)e^{-iu\left(z-\frac{\tau}{2}\right)}dz\\
&&={|\Omega_\theta|^2}e^{iu\frac{\tau}{2}}\int_{\mathbb R}{\bf x}\left(z\right){\bf x}^*\left(z-\tau\right)e^{-iuz}dz\\
&&={|\Omega_\theta|^2}e^{iu\frac{\tau}{2}}\mathcal  V_{\bf h}[{\bf x}](\tau,u)
\end{eqnarray*}
where $\mathcal  V_{\bf h}[{\bf x}](\tau,u) $ denotes the classical STFT with respect to
the window function ${\bf h}(t)={\bf x}(t).$\\


{\bf Proof of Conjugation Properties.}
By the definition of NFrAF, we have
\begin{eqnarray*}
\nonumber\overline{\mathcal  A^{\theta}_{{\bf x}}(\tau,u)}&=&|\Omega_\theta|^2\int_{\mathbb R}\left({\bf x}^*\underset{k\frac{\tau}{2}}{\otimes}{\bf x}\right)(t)e^{-it(k\tau\cot\theta-u\csc\theta)}dt\\
\nonumber&=&|\Omega_\theta|^2\int_{\mathbb R}\left({\bf x}\underset{-k\frac{\tau}{2}}{\otimes}{\bf x}^*\right)(t)e^{it[k(-\tau)\cot\theta-(-u)\csc\theta]}dt\\
\nonumber&=&\mathcal  A^{\theta,k}_{{\bf x}}(-\tau,-u).
\end{eqnarray*}
And
\begin{eqnarray*}
\nonumber\mathcal  A^{\theta}_{P{\bf x}(t)}(t,u)&=&|\Omega_\theta|^2\int_{\mathbb R}\left(P{\bf x}\underset{k\frac{\tau}{2}}{\otimes}P{\bf x}^*\right)(t)e^{it(k\tau\cot\theta-u\csc\theta)}dt\\
\label{c2}&=&|\Omega_\theta|^2\int_{\mathbb R}\left({\bf x}\underset{-k\frac{\tau}{2}}{\otimes}{\bf x}^*\right)(-t)e^{it(k\tau\cot\theta-u\csc\theta)}dt\\
 &=&|\Omega_\theta|^2\displaystyle\int_{\mathbb R}\left({\bf x}\underset{k\frac{-\tau}{2}}{\otimes}{\bf x}^*\right)(-t)e^{i(-t)[k(-\tau)\cot\theta-(-u)\csc\theta]}dt\\
&=&\mathcal  A^{\theta,k}_{{\bf x}}(-\tau,-u).
\end{eqnarray*}

Hence, completes the proof.\\

\vspace{.1in}

{\bf Proof of Time shift Property.}
 From the definition of the NFrAF, we obtain
\begin{eqnarray*}
&&\mathcal  A^{\theta,k}_{{\bf x}(t-t_0)}(t,u)\\
&&=|\Omega_\theta|^2\int_{\mathbb R}\left({\bf x}\underset{k\frac{\tau}{2}}{\otimes}{\bf x}^*\right)(t-t_0)e^{it(k\tau\cot\theta-u\csc\theta)}dt\\
&&=|\Omega_\theta|^2e^{it_0(k\tau\cot\theta-u\csc\theta)}\int_{\mathbb R}\left({\bf x}\underset{k\frac{\tau}{2}}{\otimes}{\bf x}^*\right)(t-t_0)e^{i(t-t_0)(k\tau\cot\theta-u\csc\theta)}dt\\
&&=e^{it_0(k\tau\cot\theta-u\csc\theta)}\mathcal  A^{\theta,k}_{\bf x}\left(\tau,u\right).
\end{eqnarray*}

This completes the proof.\\
\vspace{.1in}

{\bf Proof of Frequency Shift Property.}
Using Definition \ref{def swdolct}, we have
\begin{eqnarray*}
&&\mathcal  A^{\theta,k}_{\tilde {\bf x}(t)}(\tau,u)\\
&&=|\Omega_\theta|^2\int_{\mathbb R}\left(\tilde{\bf x}\underset{k\frac{\tau}{2}}{\otimes}\tilde{\bf x}^*\right)(t)e^{it(k\tau\cot\theta-u\csc\theta)}dt\\
&&=|\Omega_\theta|^2\int_{\mathbb R}e^{iu_0\left(t+k\frac{\tau}{2}\right)}\left({\bf x}\underset{k\frac{\tau}{2}}{\otimes}{\bf x}^*\right)(t)e^{-iu_0\left(t-k\frac{\tau}{2}\right)}e^{it(k\tau\cot\theta-u\csc\theta)}dt\\
&&=|\Omega_\theta|^2\int_{\mathbb R}\left({\bf x}\underset{k\frac{\tau}{2}}{\otimes}{\bf x}^*\right)(t)e^{it(k\tau\cot\theta-u\csc\theta)+iu_0k\tau}dt\\
&&=e^{iu_0k\tau}|\Omega_\theta|^2\int_{\mathbb R}\left({\bf x}\underset{k\frac{\tau}{2}}{\otimes}{\bf x}^*\right)(t)e^{it(k\tau\cot\theta-u\csc\theta)}dt\\
&&=e^{iu_0k\tau}\mathcal  A^{\theta,k}_{\bf x}\left(\tau,u\right).
\end{eqnarray*}
Which completes the proof.\\

\vspace{.1in}

{\bf Proof of Scaling Property.}
Using the definition of  NFrAF, we have
\begin{eqnarray*}
&&\mathcal  A^{\theta,k}_{\tilde {\bf x}}(t,u)\\
&&=|\Omega_\theta|^2\int_{\mathbb R}\left(\hat{\bf x}\underset{k\frac{\tau}{2}}{\otimes}\hat{\bf x}^*\right)(t)e^{it(k\tau\cot\theta-u\csc\theta)}dt\\
&&=\lambda|\Omega_\theta|^2\int_{\mathbb R}\left({\bf x}\underset{\lambda k\frac{\tau}{2}}{\otimes}{\bf x}^*\right)(\lambda t)e^{it(k\tau\cot\theta-u\csc\theta)}dt\\
&&\buildrel\rm \lambda t=t^\prime \over={\lambda |\Omega_\theta|^2}\int_{\mathbb R}\left({\bf x}\underset{  k\frac{\lambda\tau}{2}}{\otimes}{\bf x}^*\right)( t^\prime)e^{i\frac{t^\prime}{\lambda}(k\tau\cot\theta-u\csc\theta)}\frac{dt^\prime}{\lambda}\\
&&=|\Omega_\theta|^2\int_{\mathbb R}\left({\bf x}\underset{ k\frac{\lambda\tau}{2}}{\otimes}{\bf x}^*\right)( t^\prime)e^{i{t^\prime}(k\frac{\tau}{\lambda}\cot\theta-u\frac{\csc\theta}{\lambda})}{dt^\prime}\\
&&=|\Omega_\theta|^2\int_{\mathbb R}\left({\bf x}\underset{ k\frac{\lambda\tau}{2}}{\otimes}{\bf x}^*\right)( t^\prime)e^{i{t^\prime}\left[k(\lambda \tau)\frac{\cot\theta}{\lambda^2}-u\frac{\csc\theta}{\lambda}\right]}dt^\prime\\
&&=|\Omega_\theta|^2\int_{\mathbb R}\left({\bf x}\underset{ k\frac{\lambda\tau}{2}}{\otimes}{\bf x}^*\right)( t^\prime)e^{i{t^\prime}\left[k(\lambda \tau){\cot\phi}-u\left(\frac{\csc\theta}{\lambda \csc\phi}\right)\csc\phi\right]}dt^\prime\\
&&=\mathcal  A^{\phi,k}_ {\bf x}\left(\lambda \tau,u\frac{\csc\theta}{\lambda \csc\phi}\right),\quad \mbox{where} \quad \phi=arc\cot\left(\frac{\cot\theta}{\lambda^2} \right).
\end{eqnarray*}
Hence completes the proof.\\

\vspace{.1in}

{\bf Proof of Inverse Property.}
The NFrAF can be expressed in terms of Fourier transform as:

\begin{eqnarray*}
\mathcal  A^{\theta,k}_{{\bf x}}(\tau,u)
&=&|\Omega_\theta|^2\int_{\mathbb R}\left({\bf x}\underset{k\frac{\tau}{2}}{\otimes}{\bf x}^*\right)(t)e^{it(k\tau\cot\theta-u\csc\theta)}dt\\
&=&|\Omega_\theta|^2\mathcal F\left[\left({\bf x}\underset{k\frac{\tau}{2}}{\otimes}{\bf x}^*\right)(t)e^{i\tau kt\cot\theta} \right](u\csc\theta)
\end{eqnarray*}
Now taking inverse Fourier transform above equation yields
\begin{equation}
\label{rel ft}\left({\bf x}\underset{k\frac{\tau}{2}}{\otimes}{\bf x}^*\right)(t)e^{i\tau kt\cot\theta}=\frac{1}{\csc\theta}\int_{\mathbb R}\mathcal  A^{\theta,k}_{ {\bf x}}(\tau,u)e^{i\tau u\csc\theta}du.
\end{equation}
Changing ${k\frac{\tau}{2}}$ by $t$,it yields
\begin{eqnarray*}
\left({\bf x}\underset{t}{\otimes}{\bf x}^*\right)(t)e^{2it^2\cot\theta}=\frac{1}{\csc\theta}\int_{\mathbb R}\mathcal  A^{\theta,k}_{ {\bf x}}(\tau,u)e^{i\tau u\csc\theta}du.
\end{eqnarray*}
Equivalently
\begin{eqnarray*}
{\bf x}(2t){\bf x}^*(0)e^{2it^2\cot\theta}=\frac{1}{\csc\theta}\int_{\mathbb R}\mathcal  A^{\theta,k}_{ {\bf x}}(\tau,u)e^{i\tau u\csc\theta}du.
\end{eqnarray*}
Further simplifying we get the desired result as
\begin{eqnarray*}
{\bf x}(t)=\frac{e^{-i\frac{t^2}{2}\cot\theta}}{{\bf x}^*(0)\csc\theta}\int_{\mathbb R}\mathcal  A^{\theta,k}_{ {\bf x}}\left(\tau,u\right)e^{i\tau u\csc\theta}du.
\end{eqnarray*}
Hence completes the proof.\\

\vspace{.1in}

{\bf Proof of Marginal properties.}
From Definition \ref{def swdolct}, we have
\begin{eqnarray*}
&&\int_{\mathbb R}\mathcal  A^{\theta,k}_{\bf x}(\tau,u)du\\
&&=|\Omega_\theta|^2\int_{\mathbb R}\int_{\mathbb R}\left({\bf x}\underset{k\frac{\tau}{2}}{\otimes}{\bf x}^*\right)(t)e^{it(k\tau\cot\theta-u\csc\theta)}dt du\\
&&=\displaystyle\int_{\mathbb R}\left({\bf x}\underset{k\frac{\tau}{2}}{\otimes}{\bf x}^*\right)(t)e^{ikt\tau\cot\theta}\left\{|\Omega_\theta|^2\int_{\mathbb R}e^{-itu\csc\theta}du\right\} dt\\
&&=\displaystyle\int_{\mathbb R}\left({\bf x}\underset{k\frac{\tau}{2}}{\otimes}{\bf x}^*\right)(t)e^{ikt\tau\cot\theta}\delta(t)dt\\
&&=\left({\bf x}\underset{k\frac{\tau}{2}}{\otimes}{\bf x}^*\right)(0).
\end{eqnarray*}
which proves the first part.\\
Again we have from Definition \ref{def swdolct}
\begin{eqnarray*}
&&\int_{\mathbb R}\mathcal  A^{\theta,k}_{\bf x}(\tau,u)d\tau\\
&&=|\Omega_\theta|^2\int_{\mathbb R}\int_{\mathbb R}\left({\bf x}\underset{k\frac{\tau}{2}}{\otimes}{\bf x}^*\right)(t)e^{it(k\tau\cot\theta-u\csc\theta)}dt d\tau\\
&&\buildrel\rm \bm{k\frac{\tau}{2}=x-t} \over=\frac{2|\Omega_\theta|^2}{k}\int_{\mathbb R}\int_{\mathbb R}\left({\bf x}\underset{x-t}{\otimes}{\bf x}^*\right)(t)e^{it[2(x-t)\cot\theta-u\csc\theta]}dx dt.\\
&&=\frac{2|\Omega_\theta|^2}{k}\int_{\mathbb R}\int_{\mathbb R}{\bf x}(x){\bf x}^*\left(2t-x\right)e^{it[2(x-t)\cot\theta-u\csc\theta]}dx dt.\\
&&\buildrel\rm \bm{2t=x+y}\over=\frac{|\Omega_\theta|^2}{k}\int_{\mathbb R}\int_{\mathbb R}{\bf x}(x){\bf x}^*\left(y\right)e^{i\left(\frac{x+y}{2}\right)\left[2\left(x-\frac{x+y}{2}\right)\cot\theta-u\csc\theta\right]}dx dy\\
&&=\frac{|\Omega_\theta|^2}{k}\int_{\mathbb R}\int_{\mathbb R}{\bf x}(x){\bf x}^*\left(y\right)e^{i\left[\frac{(x^2-y^2)}{2}\cot\theta-\frac{(x+y)}{2}u\csc\theta\right]}dx dy\\
&&=\frac{|\Omega_\theta|^2}{k}\int_{\mathbb R}{\bf x}(x)e^{\frac{i}{2}\left[\left(x^2+\left(\frac{u}{2}\right)^2\right)\cot\theta-2x\left(\frac{u}{2}\right)\csc\theta\right]}dx\\
&&\qquad\qquad\times \int_{\mathbb R}{\bf x}^*\left(y\right)e^{\frac{-i}{2}\left[\left(y^2+\left(\frac{-u}{2}\right)^2\right)\cot\theta-2y\left(\frac{-u}{2}\right)\csc\theta\right]} dy\\
&&=\frac{1}{k}\int_{\mathbb R}{\bf x}(x)\Omega_\theta e^{\frac{i}{2}\left[\left(x^2+\left(\frac{u}{2}\right)^2\right)\cot\theta-2x\left(\frac{u}{2}\right)\csc\theta\right]}dx\\
&&\qquad\qquad\times \left[\int_{\mathbb R}{\bf x}(y) \Omega_\theta e^{\frac{i}{2}\left[\left(y^2+\left(\frac{-u}{2}\right)^2\right)\cot\theta-2y\left(\frac{-u}{2}\right)\csc\theta\right]}dy\right]^*\\
&&=\frac{1}{k}\int_{\mathbb R}{\bf x}(x)
\mathcal K_{\theta}\left(x,\frac{u}{2}\right)dx\left[\int_{\mathbb R}{\bf x}(y)\mathcal K_{\theta}\left(y,\frac{-u}{2}\right)dy\right]^*\\
&&=\frac{1}{k}\mathcal F^{\theta}[{\bf x}]\left(\frac{u}{2}\right)\mathcal F^{\theta^*}[{\bf x}]\left(\frac{-u}{2}\right).
\end{eqnarray*}
Hence completes the proof.\\

\vspace{.1in}


{\bf Proof of Moyal Formula.}
From Definition \ref{def swdolct}, we have
\begin{eqnarray*}
&&\int_{\mathbb R}\int_{\mathbb R}\mathcal  A^{\theta,k}_ {{\bf x}_1}(\tau,u)\left[\mathcal  A^{\theta,k}_ {{\bf x}_2}(\tau,u)\right]^*d\tau du\\\\
&&=\left(|\Omega_\theta|^2\right)^2\int_{\mathbb R}\int_{\mathbb R}\int_{\mathbb R}\int_{\mathbb R}\left({\bf x}_1\underset{k\frac{\tau}{2}}{\otimes}{\bf x}_1^*\right)(t)\left({\bf x}_2^*\underset{k\frac{\tau}{2}}{\otimes}{\bf x}_2\right)(t^\prime)\\\\
&&\qquad\qquad\qquad\qquad\times e^{it(k\tau\cot\theta-u\csc\theta)}e^{-it^\prime(k\tau\cot\theta-u\csc\theta)}dt d\tau dt^\prime du\\\\
&&=|\Omega_\theta|^2\int_{\mathbb R}\int_{\mathbb R}\int_{\mathbb R}\left({\bf x}_1\underset{k\frac{\tau}{2}}{\otimes}{\bf x}_1^*\right)(t)\left({\bf x}_2^*\underset{k\frac{\tau}{2}}{\otimes}{\bf x}_2\right)(t^\prime)\\\\
&&\qquad\qquad\qquad\qquad\times e^{i(t-t^\prime)k\tau\cot\theta} \left(\frac{\csc\theta}{2\pi}\int_{\mathbb R}e^{i(t-t^\prime)u\csc\theta}du\right)dt d\tau dt^\prime\\\\
&&=|\Omega_\theta|^2\int_{\mathbb R}\int_{\mathbb R}\int_{\mathbb R}\left({\bf x}_1\underset{k\frac{\tau}{2}}{\otimes}{\bf x}_1^*\right)(t^\prime)\left({\bf x}_2^*\underset{k\frac{\tau}{2}}{\otimes}{\bf x}_2\right)(t)e^{i(t-t^\prime)k\tau\cot\theta} \delta(t-t^\prime)dt d\tau dt^\prime\\\\
&&=|\Omega_\theta|^2\int_{\mathbb R}\int_{\mathbb R}\left({\bf x}_1\underset{k\frac{\tau}{2}}{\otimes}{\bf x}_1^*\right)(t)\left({\bf x}_2^*\underset{k\frac{\tau}{2}}{\otimes}{\bf x}_2\right)(t)d\tau dt\\
\end{eqnarray*}
By setting $k\frac{\tau}{2}=x-t,$ we have
\begin{eqnarray*}
\int_{\mathbb R}\int_{\mathbb R}\mathcal  A^{\theta,k}_ {{\bf x}_1}(\tau,u)\left[\mathcal  A^{\theta,k}_ {{\bf x}_2}(\tau,u)\right]^*d\tau du
&=&\frac{|\Omega_\theta|^2}{k}\int_{\mathbb R}\int_{\mathbb R}\left({\bf x}_1\underset{x-t}{\otimes}{\bf x}_1^*\right)(t)\left({\bf x}_2^*\underset{x-t}{\otimes}{\bf x}_2\right)(t)dxdt\\
\end{eqnarray*}
Now making use of tensor product and taking $2t-x=y,$ we obtain

\begin{eqnarray*}
\int_{\mathbb R}\int_{\mathbb R}\mathcal  A^{\theta,k}_ {{\bf x}_1}(t,u)\left[\mathcal  A^{\theta,k}_ {{\bf x}_2}(t,u)\right]^*d\tau du
&=&\frac{|\Omega_\theta|^2}{k}\int_{\mathbb R}\int_{\mathbb R}{\bf x}_1(x){\bf x}_1^*(y){\bf x}_2^*(x){\bf x}_2(y)dxdy\\
&=&\frac{|\Omega_\theta|^2}{k}\left(\int_{\mathbb R}{\bf x}_1(x){\bf x}_2^*(x)dx\right)\left(\int_{\mathbb R}{\bf x}_1^*(y){\bf x}_2(y)dy\right)\\
&=&\frac{|\Omega_\theta|^2}{k}\left|\langle {\bf x}_1,{\bf x}_2\rangle\right|^2.
\end{eqnarray*}
Which completes the proof.

{\bf{References}}
\begin{enumerate}

{\small {

\bibitem{af1}
L.~Cohen,
\newblock Time--frequency distributions---A review,
\newblock {\em Proc. IEEE}, 77(7):941--981, 1989.
\bibitem{sm2}
R.~Alaifari,
``Multiwindow approaches for direct and stable STFT phase,''
\newblock {\em SIMA J. Appl. Math.}, 85(5):2376--2398, 2025..
\bibitem{af2}
I.~Daubechies,
\newblock {\em Ten Lectures on Wavelets},
\newblock SIAM, Philadelphia, PA, 1992.

\bibitem{sim}
C.~E. Heil and D.~F. Walnut,
\newblock Continuous and discrete wavelet transforms,
\newblock {\em SIAM Review}, 31(4):628--666, 1989.

\bibitem{af3}
H.~M. Ozaktas, M.~A. Kutay, and Z.~Zalevsky,
\newblock {\em The Fractional Fourier Transform with Applications in Optics and Signal Processing},
\newblock Wiley, New York, 2001.

\bibitem{af4}
R.~Tao, B.~Deng, and Y.~Wang,
\newblock {\em Fractional Fourier Transform and Its Applications},
\newblock Tsinghua University Press, Beijing, 2009.

\bibitem{fr1}
V.~Namias,
\newblock The fractional order Fourier transform and its application to quantum mechanics,
\newblock {\em J. Inst. Math. Appl.}, 25:241--265, 1980.

\bibitem{af6}
A.~C. McBride and F.~H. Kerr,
\newblock On Namias's fractional Fourier transforms,
\newblock {\em SIMA J. Appl. Math.}, 39(2):159--175, 1987.

\bibitem{af7}
L.~B. Almeida,
\newblock The fractional Fourier transform and time--frequency representations,
\newblock {\em IEEE Trans. Signal Process.}, 42(11):3084--3091, 1994.

\bibitem{af8}
B.~Santhanam and J.~H. McClellan,
\newblock The discrete rotational Fourier transform,
\newblock {\em IEEE Trans. Signal Process.}, 44(4):994--998, 1996.

\bibitem{af9}
H.~M. Ozaktas, O.~Arikan, M.~A. Kutay, and G.~Bozdagi,
\newblock Digital computation of the fractional Fourier transform,
\newblock {\em IEEE Trans. Signal Process.}, 44(9):2141--2150, 1996.

\bibitem{af10}
S.-C. Pei and J.-J. Ding,
\newblock Closed-form discrete fractional and affine Fourier transforms,
\newblock {\em IEEE Trans. Signal Process.}, 48(5):1338--1353, 2000.

\bibitem{fr3}
M.~J. Bastiaans and A.~J. van Leest,
\newblock From the rectangular to the quincunx Gabor lattice via fractional Fourier transformation,
\newblock {\em IEEE Signal Process. Lett.}, 5(8):203--205, 1998.

\bibitem{fr5}
C.~Candan, M.~A. Kutay, and H.~M. Ozaktas,
\newblock The discrete fractional Fourier transform,
\newblock {\em IEEE Trans. Signal Process.}, 48(5):1329--1337, 2000.

\bibitem{fr6}
G.~Cariolaro, T.~Erseghe, P.~Kraniauskas, and N.~Laurenti,
\newblock Multiplicity of fractional Fourier transforms and their relationships,
\newblock {\em IEEE Trans. Signal Process.}, 48(1):227--241, 2000.

\bibitem{fr13}
T.~Erseghe, P.~Kraniauskas, and G.~Cariolaro,
\newblock Unified fractional Fourier transform and sampling theorem,
\newblock {\em IEEE Trans. Signal Process.}, 47(12):3419--3423, 1999.

\bibitem{fr16}
F.~Hlawatsch and G.~F. Boudreaux-Bartels,
\newblock Linear and quadratic time--frequency signal representations,
\newblock {\em IEEE Signal Process. Mag.}, 9(2):21--67, 1992.

\bibitem{fr24}
D.~Mendlovic and H.~M. Ozaktas,
\newblock Fractional Fourier transforms and their optical implementation,
\newblock {\em J. Opt. Soc. Am. A}, 10(9):1875--1881, 1993.

\bibitem{fr27}
D.~Mustard,
\newblock The fractional Fourier transform and the Wigner distribution,
\newblock {\em J. Austral. Math. Soc. Ser. B}, 38(2):209--219, 1996.

\bibitem{fr221}
R.~Tao, Y.~Li, and Y.~Wang,
\newblock Short-time fractional Fourier transform and its applications,
\newblock {\em IEEE Trans. Signal Process.}, 58(5):2568--2579, 2010.

\bibitem{fr222a}
S.-C. Pei and J.-J. Ding,
\newblock Relations between fractional operations and time--frequency distributions and their applications,
\newblock {\em IEEE Trans. Signal Process.}, 49(8):1638--1655, 2001.

\bibitem{fr222b}
A.~I. Zayed,
\newblock A new perspective on the two-dimensional fractional Fourier transform and its relationship with the Wigner distribution,
\newblock {\em J. Fourier Anal. Appl.}, 25(2):460--487, 2019.

\bibitem{fr222c}
D.~Mustard,
\newblock The fractional Fourier transform and the Wigner distribution,
\newblock {\em J. Austral. Math. Soc. Ser. B}, 38(2):209--219, 1996.

\bibitem{fr222d}
D.~Mendlovic, H.~M. Ozaktas, and A.~W. Lohmann,
\newblock Graded-index fibers, Wigner-distribution functions, and the fractional Fourier transform,
\newblock {\em Appl. Opt.}, 33(26):6188--6193, 1994.

\bibitem{new26}
J.~A. Johnston,
\newblock Wigner distribution and FM radar signal design,
\newblock {\em IEE Proc. F: Radar Signal Process.}, 136(2):81--88, 1989.

\bibitem{new27}
M.~S. Wang, A.~K. Chan, and C.~K. Chui,
\newblock Linear frequency-modulated signal detection using the Radon--ambiguity transform,
\newblock {\em IEEE Trans. Signal Process.}, 46(3):571--586, 1998.

\bibitem{new28}
L.~Auslander and R.~Tolimieri,
\newblock Radar ambiguity functions and group theory,
\newblock {\em SIAM J. Math. Anal.}, 16(3):577--601, 1985.

\bibitem{new29}
G.~Kutyniok,
\newblock Ambiguity functions, Wigner distributions, and Cohen’s class for LCA groups,
\newblock {\em J. Math. Anal. Appl.}, 277(2):589--608, 2003.

\bibitem{d12}
D.~Urynbassarova, A.~Urynbassarova, and E.~Al-Hussam,
\newblock The Wigner--Ville distribution based on the offset linear canonical transform domain,
\newblock in {\em Proc. 2nd Int. Conf. Modelling, Simulation and Applied Mathematics}, 2017.

\bibitem{zhswd}
Z.~C. Zhang, X.~Jiang, S.~Z. Qiang, A.~Sun, Z.~Y. Liang, X.~Shi, and A.~Y. Wu,
\newblock Scaled Wigner distribution using fractional instantaneous autocorrelation,
\newblock {\em Optik}, 237:166691, 2021.

\bibitem{OWNb}
M.~Y. Bhat and A.~H. Dar,
\newblock Convolution and correlation theorems for the Wigner--Ville distribution associated with the quaternion offset linear canonical transform,
\newblock {\em Signal Image Video Process.}, 2023.

\bibitem{new30}
Z.~Y. Zhang and M.~Levoy,
\newblock Wigner distributions and how they relate to the light field,
\newblock in {\em Proc. IEEE Int. Conf. Computational Photography}, pp.~1--10, 2009.

\bibitem{fra19}
G.~Kutyniok,
\newblock Ambiguity functions, Wigner distributions, and Cohen’s class for LCA groups,
\newblock {\em J. Math. Anal. Appl.}, 277(2):589--608, 2003.

\bibitem{fra20}
R.~G. Shenoy and T.~W. Parks,
\newblock Wide-band ambiguity functions and affine Wigner distributions,
\newblock {\em Signal Process.}, 41(3):339--363, 1995.

\bibitem{fra21}
H.~T. Li, P.~M. Djurić, and M.~M. Djuric,
\newblock MMSE estimation of nonlinear parameters of multiple linear and quadratic chirps,
\newblock {\em IEEE Trans. Signal Process.}, 46(3):796--801, 1998.

\bibitem{scale1}
A.~H. Dar and M.~Y. Bhat,
\newblock Scaled ambiguity function and scaled Wigner distribution for linear canonical transform signals,
\newblock {\em Optik}, 267:169678, 2022.

\bibitem{scale3}
M.~Y. Bhat and A.~H. Dar,
\newblock Quadratic-phase scaled Wigner distribution: Convolution and correlation,
\newblock {\em Signal Image Video Process.}, 2023.

\bibitem{fraa1}
T.~W. Che, B.~Z. Li, and T.~Z. Xu,
\newblock The ambiguity function associated with the linear canonical transform,
\newblock {\em EURASIP J. Adv. Signal Process.}, 2012:138, 2012.

\bibitem{zhang}
Z. C. Zhang,
``New Wigner distribution and ambiguity function associated with the linear canonical transform,''
\emph{Optik}, 127 (2016), pp.~4995--5012.

\bibitem{d11}
D. Urynbassarova, B. Z. Li, and R. Tao,
``The Wigner--Ville distribution in the linear canonical transform domain,''
\emph{IAENG Int. J. Appl. Math.}, 46(4) (2016), pp.~559--563.

\bibitem{owncar}
M. Y. Bhat and A. H. Dar,
``Wigner--Ville distribution and ambiguity function of QPFT signals,''
\emph{Ann. Univ. Craiova Math. Comput. Sci. Ser.}, 50(2) (2023), pp.~1--14.

\bibitem{fra26}
X.-G. Xia,
``Discrete chirp-Fourier transform and its application to chirp rate estimation,''
\emph{IEEE Trans. Signal Process.}, 48(11) (2006), pp.~3122--3133.

\bibitem{fra28}
M. Wang, A. K. Chan, and C. K. Chui,
``Linear frequency-modulated signal detection using the Radon--ambiguity transform,''
\emph{IEEE Trans. Signal Process.}, 46(3) (1998), pp.~571--587.

\bibitem{fra24}
C. Zhe, W. Hongyu, and Q. Tianshuang,
``Research on ambiguity function associated with the fractional Fourier transform,''
\emph{Signal Process.}, 19(6) (2003), pp.~499--502 (in Chinese).

\bibitem{fr222b}
V. B. Shakhmurov and A. I. Zayed,
``Fractional Wigner distribution and ambiguity functions,''
\emph{Fract. Calc. Appl. Anal.}, 6(4) (2003), pp.~473--490.

\bibitem{fr222a1}
A. H. Dar, M. Zayed, and M. Y. Bhat,
``Convolution-based fractional Wigner distribution and ambiguity function: Theory and applications,''
\emph{J. Pseudo-Differ. Oper. Appl.}, 15 (2024), Art.~76.

\bibitem{siam1}
C. F. Higham and D. J. Higham,
``Deep learning: An introduction for applied mathematicians,''
\emph{SIAM Rev.}, 61(4) (2019), pp.~860--891.

\bibitem{sim}
Z. Fang, H. Feng, S. Huang, and D.-X. Zhou,
``Theory of deep convolutional neural networks II: Spherical analysis,''
\emph{Neural Netw.}, 131 (2020), pp.~154--162.

\bibitem{siam2}
Y. Yang, H. Feng, and D.-X. Zhou,
``On the rates of convergence for learning with convolutional neural networks,''
\emph{SIAM J. Math. Data Sci.}, 7(4) (2025), Art.~M1652945.

\bibitem{CNN1}
G. Lekkas, E. Vrochidou, and G. A. Papakostas,
``Time--frequency transformations for enhanced biomedical signal classification with convolutional neural networks,''
\emph{BioMedInformatics}, 5(1) (2025), Art.~7.

\bibitem{CNN2}
M. Parlak,
``Use cases for time--frequency image representations and deep learning techniques for improved signal classification,''
\emph{arXiv preprint}, arXiv:2302.11093, 2023.

\bibitem{cnn1}
H. Lee, Y. Largman, P. Pham, and A. Y. Ng,
``Unsupervised feature learning for audio classification using convolutional deep belief networks,''
in \emph{Advances in Neural Information Processing Systems}, vol.~22, 2009, pp.~1096--1104.

\bibitem{cnn2}
M. Espi, M. Fujimoto, K. Kinoshita, and T. Nakatani,
``Exploiting spectro-temporal locality in deep learning acoustic event detection,''
\emph{EURASIP J. Audio Speech Music Process.}, 2015, Art.~26.

\bibitem{cnn3}
Y. Costa, L. Oliveira, and C. S. Jr.,
``An evaluation of convolutional neural networks for music classification using spectrograms,''
\emph{Appl. Soft Comput.}, 52 (2017), pp.~28--38.

\bibitem{CNN4}
S. K. Dasha and G. S. Rao,
``Arrhythmia detection using Wigner--Ville distribution-based neural networks,''
\emph{Procedia Comput. Sci.}, 85 (2016), pp.~806--811.

\bibitem{CNN3}
J. Brynolfsson and M. Sandsten,
``Classification of one-dimensional nonstationary signals using the Wigner--Ville distribution in convolutional neural networks,''
in \emph{Proc. 25th Eur. Signal Process. Conf. (EUSIPCO)}, 2017, pp.~1--5.

\bibitem{fr222c}
A. H. Dar, A. M. Huda, J. G. Dar, and A. Sundus,
``New fractional Wigner distribution using fractional instantaneous autocorrelation,''
\emph{Signal Image Video Process.}, 18 (2024), pp.~1--11.

\bibitem{scale2}
M. Y. Bhat and A. H. Dar,
``Scaled Wigner distribution in the offset linear canonical transform domain,''
\emph{Optik}, 2022, Art.~169286.

\bibitem{fra11}
J. Zhong and Y. Huang,
``Time representation based on an adaptive short-time Fourier transform,''
\emph{IEEE Trans. Signal Process.}, 58 (2010), pp.~5118--5128.

\bibitem{fra12}
Y. Huang,
``Short-time Fourier transform with adaptive window width based on chirp rate,''
\emph{IEEE Trans. Signal Process.}, 60 (2012), pp.~4065--4080.

}}
\end{enumerate}

\end{document}